\documentclass{amsart}
\linespread{1.3}

\usepackage{amsmath} 
\usepackage{amssymb} 
\usepackage{amsthm} 
	\theoremstyle{definition}
	\newtheorem{thm}{Theorem}[section]
	\newtheorem{prop}[thm]{Proposition}
	\newtheorem{coro}[thm]{Corollary}
	\newtheorem{lemma}[thm]{Lemma}
	\newtheorem{defn}[thm]{Definition}

	\newtheorem{nota}[thm]{Notation}

\usepackage{chngcntr} 
	\counterwithin{figure}{section} 
\usepackage{cite} 
\usepackage{enumerate} 
\usepackage{etex} 
\usepackage{fancyhdr} 
\usepackage{float} 
\usepackage[utf8]{inputenc} 
\usepackage[margin=1.2in]{geometry} 
\usepackage{graphicx} 
\usepackage{multicol} 
\usepackage{pgfplots} 
\usepackage{subcaption} 
\usepackage{tikz} 
\usepackage{tikz-cd} 
\usepackage{xcolor} 

\newcommand{\bT}{\mathbb{T}}

\newcommand{\bZ}{\mathbb{Z}}

\newcommand{\sK}{\mathcal{K}}

\newcommand{\sP}{\mathcal{P}}

\newcommand{\sS}{\mathcal{S}}
\newcommand{\sT}{\mathcal{T}}

\newcommand{\eps}{\varepsilon}
\newcommand{\vphi}{\varphi}

\newcommand{\setdiv}{\,\big|\,}
\newcommand{\ran}{\text{ran}}
\newcommand{\id}{\text{id}}
\newcommand{\orb}{\text{orb}}

\newcommand{\dprime}{^{\prime\prime}}

\newcommand{\systemfull}{\sS=( T,( X_t )_{ t = 1 , \ldots , T }, ( K_t )_{ t = 1 , \ldots , T }, ( Y_{ t , k } )_{ t = 1 , \ldots , T ; k = 1 , \ldots , K_t } , ( J_{ t , k } )_{ t = 1 , \ldots , T ; k = 1 , \ldots , K_t } ) }
\newcommand{\systembasic}{\sS=( T, ( X_t ) , ( K_t ), ( Y_{ t , k } ) , ( J_{ t , k } ) ) }
\newcommand{\systemarg}[1]{ \sS^{ #1 } = ( T^{ #1 } , ( X_t^{ #1 } ) , ( K_t^{ #1 } ) , ( Y_{ t , k }^{ #1 } ) , ( J_{ t , k }^{ #1 } ) ) }

\newcommand{\image}{\text{im}}

\address{Department of Mathematics, Ben Gurion University of the Negev,
P.O.B. 653, Be'er Sheva 84105, Israel}
\email{PHerstedt@gmail.com}

\allowdisplaybreaks

\begin{document}

\title[Dynamical classification for fiberwise essentially minimal systems]{A dynamical classification for crossed products of fiberwise essentially minimal zero-dimensional dynamical systems}

\author{Paul Herstedt}
\thanks{This research was supported by the Israel Science Foundation grant no.~476/16.}

\begin{abstract}
We prove that crossed products of fiberwise essentially minimal zero-dimensional dynamical systems have isomorphic $ K $-theory if and only if the dynamical systems are strong orbit equivalent. Under the additional assumption that the dynamical systems have no periodic points, this gives a classification theorem including isomorphism of the $ C^* $-algebras as well. We additionally explore the $ K $-theory of such crossed products and the Bratteli diagrams associated to the dynamical systems.
\end{abstract}

\maketitle

\section*{Introduction}

In 1990, Ian Putnam proved in \cite{Putnam90} that the crossed product $ C^* $-algebras associated to minimal Cantor systems are A$\bT$-algebras of real rank zero. Using the classification results of Geroge Elliott in \cite{Elliott93} and Dadarlat-Gong in \cite{DadarlatGong97}, one sees that such $ C^* $-algebras are classifiable by their $ K $-theory. In 1995, Putnam, along with Thierry Giordano and Christian Skau, expanded this classification theorem to include dynamics; in \cite{GiordanoPutnamSkau95}, they showed that there is a condition on the dynamical systems, called ``strong orbit equivalence", that is equivalent to isomorphism of the crossed product $ C^* $-algebras. This dynamical classification was motivated by Krieger's theorem (see \cite{Krieger69} and \cite{Krieger76}), which says that for ergodic non-singular systems, the associated von Neumann crossed product factors are isomorphic if and only if the systems are orbit equivalent. The goal of this paper is to expand these results in the $ C^* $-setting.

In our previous paper \cite{Herstedt21}, we determined a condition on a zero-dimensional dynamical system called ``fiberwise essentially minimal" (see Definition \ref{defnFiberwiseEssentiallyMinimal} below) that guarantees that the associated crossed product is an A$\bT$-algebra. As its name suggests, this class is a broadening of minimal (and also essentially minimal). If we further restrict the system to have no periodic points, this crossed product has real rank zero and is therefore classifiable by $ K $-theory (due to the work of Elliott in \cite{Elliott93} and Dadarlat-Gong in \cite{DadarlatGong97}). This was an expansion of work done on the minimal Cantor case in 1990 by Ian Putnam (see \cite{Putnam89} and \cite{Putnam90}) in which the crossed products are simple, and work done on the essentially minimal case in 1992 by Putnam and Skau along with Richard Herman (see \cite{HermanPutnamSkau92}) in which the crossed products are not necessarily simple. Our result from \cite{Herstedt21} includes many more non-simple crossed products.

This paper expands on the work in \cite{Herstedt21} in two major ways. The first of which is what we explore in Section 2, where we discuss some specifics about the $ K $-theory of the crossed products. We define ``large subalgebras" of our crossed products (see Definition \ref{defnAZ}) which are AF-subalgebras (see Theorem \ref{thmAZisAF}) that have the same $ K_0 $ group as the crossed product (see Theorem \ref{thmK0AZIsomorphism}). This mirrors the result of large subalgebras in the minimal case by Putnam in \cite{Putnam90}. We also give a simple description of the $ K_1 $ group of the crossed product in Theorem \ref{thmK1Formula}.

The second major aspect of this paper is expanding the dynamical classification of minimal Cantor systems that coincides with the $ K $-theoretic classification, introduced by Giordano, Putnam, and Skau in \cite{GiordanoPutnamSkau95}. They introduce the notion of ``strong orbit equivalence", which we expand to the fiberwise essentially minimal case in Definition \ref{defnFiberwiseEssentiallyMinimal}. In Section 3, we discuss how the circle algebra direct system that gives the A$\bT$-algebra of the crossed product gives us an ordered Bratteli diagram whose Vershik system is conjugate to the original dynamical system. Using this Bratteli diagram along with our $ K $-theory results, we then prove Theorem \ref{thmMainTheoremPrelude}, which tells us that for fiberwise essentially minimal zero-dimensional systems, $ K $-theory isomorphism of the crossed products is equivalent to strong orbit equivalence of the dynamical systems. This, combined with the classification result of \cite{Elliott93} and \cite{DadarlatGong97}, gives us Theorem \ref{thmMainTheorem}, which tells us that if the dynamical systems have no periodic points, this is also equivalent to isomorphism of the crossed products.

\textbf{Acknowledgements.} This work was done at the Ben Gurion University of the Negev, funded by the Israel Science Foundation (grant no.476/16).

\section{Preliminaries}

This section introduces terms that will be used to prove the main theorems of the paper, along with some examples and some previous relevant results.

Let $ X $ be a totally disconnected compact metrizable space and let $ h : X \to X $ be a homeomorphism of $ X $. We call $ ( X , h ) $ a \emph{zero-dimensional system}. Let $ \alpha $ be the automorphism of $ C ( X ) $ induced by $ h $; that is, $ \alpha $ is defined by $ \alpha ( f ) ( x ) = f ( h^{ -1 } ( x ) ) $ for all $ f \in C ( X ) $ and all $ x \in X $. Then we denote the crossed product of $ C ( X ) $ by $ \alpha $ by $ C^* ( \bZ , X , h ) $ (or, less commonly, $ C^* ( \bZ , C ( X ) , \alpha ) $). We denote the ``standard unitary" of $ C^* ( \bZ , X , h ) $ by $ u $, which is a unitary element of $ C^* ( \bZ , X , h ) $ that satisfies $ u f u^* = \alpha ( f ) $ for all $ f \in C ( X ) $.

We will use the disjoint union symbol $ \bigsqcup $ to denote unions of disjoint sets. We will not always say explicitly that the sets in this union are disjoint, as this will be implied by the notation. By a partition $ \sP $ of $ X $, we mean a finite set of mutually disjoint compact open subsets of $ X $ whose union is $ X $.

We say that a nonempty closed subset $ Y $ of $ X $ is a \emph{minimal set} if it is $ h $-invariant and has no nonempty $ h $-invariant proper closed subsets. By Zorn's lemma, minimal sets exist for every zero-dimensional system. We say that a dynamical system $ ( X , h ) $ is an \emph{essentially minimal system} if it has a unique minimal set. 

The following definition is introduced as Definition 1.10 in \cite{Herstedt21}.

\begin{defn}\label{defnSystemOfFiniteReturnTimeMaps}
Let $ ( X , h ) $ be a zero-dimensional system and let $ \sP $ be a partition of $ X $. We define a 
\emph{system of finite first return time maps subordinate to $ \sP $} to be a tuple
\[
\systemfull
\]
such that:
\begin{enumerate}[(a)]
\item We have $ T \in \bZ_{ > 0 } $. 
\item For each $ t \in \{ 1 , \ldots , T \} $, $ X_t $ is a compact open subset of $ X $. That $ \sS $ is subordinate to $ \sP $ means that, for each $ t \in \{ 1 , \ldots , T \}$, $ X_t $ is contained in an element of $ \sP $.
\item For each $ t \in \{ 1 , \ldots , T \} $, $ K_t \in \bZ_{ > 0 } $. 
\item For each $ t \in \{ 1 , \ldots , T \} $ and each $ k \in \{ 1 , \ldots , K_t \} $, $ Y_{ t , k } $ is a compact open subset of $X_t$. Moreover, for each $ t \in \{ 1 , \ldots , T\}$, $ \{ Y_{ t , 1 } , \ldots , Y_{ t , K_t } \} $ is a partition of $X_t$; that is, 
\[
\bigsqcup_{ k = 1 }^{ K_t } Y_{ t , k } = X_t .
\]
\item For each $ t \in \{ 1 , \ldots , T \} $ and each $ k \in \{ 1 , \ldots , K_t \} $, $ J_{ t , k } \in \bZ_{ > 0 } $. Using Definition \ref{defnLambdaU}, we can write $ \{ J_{ t , k } \} = \lambda_{ X_t } ( Y_{ t , k } ) $. Moreover, for each $ t \in \{ 1 , \ldots , T \} $, $ \{ h^{ J_{ t , 1 } } ( Y_{ t , 1 } ) , \ldots , h^{ J_{ t , K_t } } ( Y_{ t , K_t } ) \} $ is a partition of $ X_t $; that is, 
\[
\bigsqcup_{ k = 1 }^{ K_t } h^{ J_{ t , k } } ( Y_{ t , k } ) = X_t .
\]
\item The set
\[
\sP_1 ( \sS ) = \big\{ h^j ( Y_{ t , k } ) \setdiv \mbox{$ t \in \{ 1 , \ldots , T \} $, $ k \in \{ 1 , \ldots , K_t \} $, and $ j \in \{ 0 , \ldots , J_{ t , k } - 1 \} $} \big\}
\]
is a partition of $X$. Note that this combined with condition (e) also implies 
\[
\sP_2 ( \sS ) = \big\{ h^j ( Y_{ t , k } ) \setdiv \mbox{$ t \in \{ 1 , \ldots , T \} $, $ k \in \{ 1 , \ldots , K_t \} $, and $ j \in \{ 1 , \ldots , J_{ t , k } \} $} \big\}
\]
is a partition of $X$.
\end{enumerate}
\end{defn}

The following definition is introduced as Definition 1.18 in \cite{Herstedt21}.

\begin{defn}\label{defnFiberwiseEssentiallyMinimal}
Let $ ( X , h ) $ be a zero-dimensional system and let $ Z \subset X $ be a closed subset. We say that the triple $ ( X , h , Z ) $ is a \emph{fiberwise essentially minimal zero-dimensional system} if there is a quotient map $ \psi: X \to Z $ such that
\begin{enumerate}[(a)]
\item $ \psi|_Z : Z \to Z $ is the identity map.
\item $ \psi \circ h = \psi $.
\item For each $ z \in Z $, $ ( \psi^{ -1 } ( z ) , h|_{ \psi^{ -1 } ( z ) } ) $ is an essentially minimal system and $ z $ is in its minimal set.
\end{enumerate}
\end{defn}

For examples of fiberwise essentially minimal zero-dimensional systems, see Examples 1.18 in \cite{Herstedt21}. The connection between Definition \ref{defnSystemOfFiniteReturnTimeMaps} and Definition \ref{defnFiberwiseEssentiallyMinimal} is the following theorem, which appears as Theorem 2.1 of \cite{Herstedt21}.

\begin{thm}\label{thmFiberwiseIffAdmitsPartitions}
Let $ ( X , h ) $ be a zero-dimensional system. Then there exists some closed $ Z \subset X $ such that $ ( X , h , Z ) $ is fiberwise essentially minimal if and only if for any partition $ \sP $ of $ X $, $ ( X , h ) $ admits a system of finite first return time maps subordinate to $ \sP $.
\end{thm}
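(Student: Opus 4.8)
The plan is to treat the two implications separately, with the reverse implication carrying the real difficulty.

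For the forward implication, assume $(X,h,Z)$ is fiberwise essentially minimal with quotient map $\psi$, and fix a partition $\sP$. The first thing I would record is a recurrence lemma: since each fiber $\psi^{-1}(z)$ is essentially minimal with $z$ in its minimal set, the forward and backward orbit closures of every point of the fiber contain the minimal set and hence $z$, so any clopen set containing $z$ meets every orbit in that fiber. I would then build the towers directly from $\sP$ and $\psi$. Because $\psi \circ h = \psi$, the preimage $\sT_P := \psi^{-1}(Z \cap P)$ of each clopen set $Z \cap P$ (for $P \in \sP$) is an $h$-invariant clopen set, and these sets partition $X$ as $P$ ranges over $\sP$. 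Inside $\sT_P$ every base point $z \in Z \cap P$ lies in $P$, so by the recurrence lemma $X_P := \sT_P \cap P$ is a clopen complete section for $(\sT_P,h)$ lying inside the single element $P$ of $\sP$. The first-return time to $X_P$ is then finite on all of $X_P$, and continuous, hence bounded, by compactness and zero-dimensionality; decomposing $X_P$ into the level sets $Y_{P,k}$ of this return time, with constant return values $J_{P,k}$, produces exactly the data of Definition \ref{defnSystemOfFiniteReturnTimeMaps}. Conditions (d)--(f) follow because the first-return map is a bijection of $X_P$ and because the tower built over $X_P$ fills $\sT_P$, so the towers reassemble the partition $\{\sT_P\}$ of $X$. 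Taking $\{X_1,\dots,X_T\}$ to be the nonempty sets $X_P$ finishes this direction.

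For the reverse implication, assume that systems of finite first return time maps exist subordinate to every partition, and fix a refining sequence of partitions $\sP^{(1)} \preceq \sP^{(2)} \preceq \cdots$ generating the topology of $X$. The heart of this direction is to upgrade the bare existence hypothesis to a \emph{nested} family of systems $\sS^{(n)}$ subordinate to $\sP^{(n)}$, whose bases satisfy $B_{n+1} \subseteq B_n$ and whose Kakutani--Rokhlin partitions $\sP_1(\sS^{(n)})$ are successively refining; I would obtain this by applying the hypothesis to common refinements of $\sP^{(n+1)}$ with the tower data already built and splicing the first-return structure of the new towers into the old. I then set $Z := \bigcap_n B_n$, a nonempty closed set, and define $\psi\colon X \to Z$ as the limit of the locally constant maps carrying each point to the base point (in $B_n$) of the level-$n$ column through it. I expect this refinement and nesting step to be the main obstacle, since it is precisely what converts the unstructured existence statement into a coherent inverse system --- the Bratteli diagram of Section 3 --- and minimal sets need not vary continuously, so no section $Z$ can be written down by hand.

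Granting the nested family, the verification of Definition \ref{defnFiberwiseEssentiallyMinimal} proceeds as follows. Each tower of $\sS^{(n)}$ is an $h$-invariant clopen set, and the fiber $\psi^{-1}(z)$ is the nested intersection of the towers through $z$, so it is closed and $h$-invariant, with $\psi \circ h = \psi$ and $\psi|_Z = \id$ immediate from the construction. That each fiber is essentially minimal is the one genuinely dynamical point: two disjoint minimal sets inside a fiber could be enclosed in disjoint clopen sets and separated by some $\sP^{(n)}$, and then --- since each minimal set meets the base $X_t$ of its tower and each such $X_t$ lies in a single element of $\sP^{(n)}$ --- they would be forced into distinct invariant towers, contradicting their lying in a common fiber. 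Finally, because $z$ lies in the base at every level and the partitions separate points, the recurrence of $z$ to arbitrarily small neighborhoods places it in the minimal set of its fiber, giving condition (c) and completing the argument.
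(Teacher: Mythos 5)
The paper itself never proves this statement: it imports it wholesale as Theorem 2.1 of \cite{Herstedt21}, so your argument can only be judged on its own terms. Your forward implication is correct and essentially complete. Decomposing $X$ into the invariant clopen sets $\psi^{-1}(Z\cap P)$, taking $X_P=\psi^{-1}(Z\cap P)\cap P$ as the base (which contains $Z\cap P$, hence is a clopen set meeting the minimal set of every fiber over $Z\cap P$), deducing finiteness of forward and backward return times from the fact that in an essentially minimal system every $\omega$- and $\alpha$-limit set contains the unique minimal set, boundedness from local constancy plus compactness, and conditions (d)--(f) of Definition \ref{defnSystemOfFiniteReturnTimeMaps} from bijectivity of the first-return map and the fact that the tower over $X_P$ fills $\psi^{-1}(Z\cap P)$ --- all of this checks out.

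The reverse implication has a genuine gap, and it sits exactly where you flag ``the main obstacle'': the existence of the nested family $\sS^{(n)}$ with $B_{n+1}\subseteq B_n$ and with base sets subordinate to $\sP^{(n)}$. ``Applying the hypothesis to common refinements and splicing'' is not a construction, and the two natural ways to make it one both fail. If you take a system $\sS''$ subordinate to $\sP^{(n+1)}\vee\sP_1(\sS^{(n)})$ and translate each of its base sets $X''_s\subset h^{j}(Y^{(n)}_{t,k})$ down into $B_n$ by $h^{-j}$, you do get a system with base inside $B_n$, but $h^{-j}$ destroys all diameter control: the new base sets are only as small as the level-$n$ base sets, and iterating the procedure leaves every later base trapped at the mesh of $\sP^{(1)}$. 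Then $\bigcap_n B_n$ can meet a fiber in a continuum and $\psi$ is not well defined. If instead you induce the return structure of $\sS^{(n)}$ on $B_n$ intersected with the new towers, the base stays inside $B_n$ but is again only subordinate to $\sP^{(n)}$, not $\sP^{(n+1)}$. There is a real tension here: subordination forces each base to be a complete section of its (invariant, clopen) tower lying in a \emph{single} small element of $\sP^{(n+1)}$, and nothing in the bare existence hypothesis tells you such a section can be found inside the previously chosen $B_n$.

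What fills the gap is a continuity property of the fiberwise minimal sets, and it forces the logical order of your argument to be inverted. First form the quotient $\psi_0\colon X\to Y$ by the algebra of \emph{all} $h$-invariant clopen sets (no towers needed), and run your separation argument there: it shows, directly from the hypothesis, that every class is essentially minimal --- this part of your proposal is sound and does not require nesting. Next show that $y\mapsto M_y$ (the minimal set of the class over $y$) is lower semicontinuous: if $M_{y_n}\cap U=\varnothing$ with $y_n\to y_0$, a Hausdorff-convergent subsequence of the $M_{y_n}$ has limit $L$, which is closed, invariant, contained in $\psi_0^{-1}(y_0)$ and disjoint from $U$, hence contains $M_{y_0}$, a contradiction when $M_{y_0}\cap U\neq\varnothing$. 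With lower semicontinuity one either extracts a continuous selection $\sigma\colon Y\to X$, $\sigma(y)\in M_y$ (zero-dimensional domain), and sets $Z=\sigma(Y)$, $\psi=\sigma\circ\psi_0$, finishing the theorem with no nested systems at all; or one uses it, together with compactness of $Y$ and the fact that $B_n$ is clopen and meets every $M_y$, to produce the nested, subordinate systems your plan calls for. Either way, the fiber structure and essential minimality must be established \emph{before} the nesting, whereas your proposal uses the nesting to define the fibers in the first place; as written, the central step of the reverse direction is asserted rather than proved.
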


The following is Theorem 2.2 in \cite{Herstedt21}.

\begin{thm}\label{thmPreviousMainTheorem}
Let $ ( X , h , Z ) $ be a fiberwise essentially minimal zero-dimensional system. Then $ C^* ( \bZ , X , h ) $ is an A$\bT$-algebra.
\end{thm}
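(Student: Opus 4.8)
The plan is to follow Putnam's strategy for minimal Cantor systems, realizing $C^*(\bZ, X, h)$ as an inductive limit of circle algebras built from Kakutani--Rokhlin towers, with the single base point of the minimal case replaced by the set $Z$. First I would use Theorem \ref{thmFiberwiseIffAdmitsPartitions} to produce, for a fixed sequence of partitions $\sP_1, \sP_2, \ldots$ of $X$ that refine one another and generate the topology, a corresponding sequence of systems of finite first return time maps $\systemarg{(n)}$ with $\sS^{(n)}$ subordinate to $\sP_n$. The essential content of this first step, and the place where fiberwise essential minimality is really used, is to arrange these systems to be nested: the level partition $\sP_1(\sS^{(n+1)})$ should refine $\sP_1(\sS^{(n)})$, with bases nested so that the towers of $\sS^{(n+1)}$ subdivide those of $\sS^{(n)}$. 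That such a refining sequence can be chosen, with the bases collapsing toward $Z$, should follow from condition (c) of Definition \ref{defnFiberwiseEssentiallyMinimal}: because each fiber is essentially minimal with $z$ in its minimal set, the first return times to shrinking neighborhoods of $Z$ can be made arbitrarily large off of a small set.

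Next I would, for each $n$, define a modified unitary $u_n \in C^*(\bZ, X, h)$ that agrees with $u$ on every level of every tower except the top, and that sends the top level $h^{J_{t,k}-1}(Y_{t,k})$ of each column back down to its own base $Y_{t,k}$ rather than to $h^{J_{t,k}}(Y_{t,k})$. Concretely $u_n$ implements the homeomorphism $h_n$ of $X$ that cycles each column of $\sS^{(n)}$ as a closed loop, so that $(X, h_n)$ is a system all of whose orbits are periodic, of period $J_{t,k}$ on the column with base $Y_{t,k}$. Let $A_n$ be the $C^*$-subalgebra of $C^*(\bZ, X, h)$ generated by $u_n$ together with the functions constant on each atom of $\sP_1(\sS^{(n)})$. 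The tower levels give a full system of matrix units for each column, while $u_n$ run once around the column is a unitary generating a copy of $C(\bT)$; this should identify $A_n \cong \bigoplus_{t,k} M_{J_{t,k}}(C(\bT))$, a circle algebra. The key computation is the standard fact that the transformation group $C^*$-algebra of a free $\bZ$-action factoring through a single periodic orbit of length $J$ on a clopen transversal is $M_J(C(\bT))$.

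I would then verify that the nesting of the $\sS^{(n)}$ yields inclusions $A_n \subseteq A_{n+1}$: refining the level partition subdivides the matrix units, and because $u_{n+1}$ agrees with $u_n$ except on the smaller top levels of the finer towers, the generators of $A_n$ lie in $A_{n+1}$. Finally I would show $\overline{\bigcup_n A_n} = C^*(\bZ, X, h)$. That $C(X) \subseteq \overline{\bigcup_n A_n}$ is immediate since the $\sP_n$ generate the topology. To recover the standard unitary $u$, I would estimate the discrepancy between $u$ and $u_n$: the two differ only on the union of top levels $\bigsqcup_{t,k} h^{J_{t,k}-1}(Y_{t,k})$, and the support of this difference shrinks as $n \to \infty$, again because the return times grow off of a small set. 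Hence $u \in \overline{\bigcup_n A_n}$, so $\varinjlim A_n = C^*(\bZ, X, h)$ is an inductive limit of circle algebras and therefore an A$\bT$-algebra.

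The main obstacle I anticipate is the first step: constructing a genuinely nested refining sequence $(\sS^{(n)})$ of systems of finite first return time maps. Theorem \ref{thmFiberwiseIffAdmitsPartitions} supplies one such system per partition, but these are not automatically compatible across $n$, and forcing the towers and bases to nest — while keeping the bases collapsing onto $Z$ so that the top-level discrepancy between $u$ and $u_n$ vanishes — is the delicate part. The identification of each $A_n$ as a circle algebra and the density of $\bigcup_n A_n$ should then be adaptations of the minimal Cantor arguments.
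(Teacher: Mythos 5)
You should first be aware that this paper does not prove Theorem \ref{thmPreviousMainTheorem} at all: it imports it as Theorem 2.2 of \cite{Herstedt21}. Measured against the strategy of that cited proof (whose tower machinery is visible here in the proof of Theorem \ref{thmAZisAF} and in Corollary \ref{coroCircleAlgebraApproximation}), your first step is fine but is not where the difficulty lies: nested systems of finite first return time maps subordinate to a generating sequence of partitions, with bases containing $Z$, are constructed in exactly this way in the proof of Theorem \ref{thmAZisAF}, quoting Proposition 1.14 and Lemma 4.12 of \cite{Herstedt21}. The genuine gaps are in the two steps you label as standard.

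First, your $A_n$ is not a circle algebra. The unitary you describe is
\[
u_n \;=\; \sum_{t,k}\Bigl(\,\sum_{j=0}^{J_{t,k}-2} u\,\chi_{h^j(Y_{t,k})}\;+\;u^{\,1-J_{t,k}}\chi_{h^{J_{t,k}-1}(Y_{t,k})}\Bigr),
\]
and a direct computation gives $u_n^{\,J_{t,k}}\chi_{Y_{t,k}}=\chi_{Y_{t,k}}$: the trip once around a column is the identity, not a Haar unitary. Indeed each summand above is one of the matrix units $u^{i-j}\chi_{h^j(Y_{t,k})}$, so $C^*\bigl(u_n,\,C(\sP_1(\sS^{(n)}))\bigr)$ is precisely the finite-dimensional algebra $A^{(n)}\cong\bigoplus_{t,k}M_{J_{t,k}}(\bC)$ appearing in the proof of Theorem \ref{thmAZisAF}, and your inductive limit is the AF large subalgebra $A_Z$ of Definition \ref{defnAZ}, not the crossed product. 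The ``standard fact'' you invoke concerns the universal crossed product of a periodic action; the map $C^*(\bZ,X,h_n)\to C^*(C(X),u_n)\subset C^*(\bZ,X,h)$ given by the universal property is not injective, and here it collapses the isotropy copy of $C(\bT)$ to $\bC$.

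Second, the density step is false, and no choice of the top-to-base partial isometries repairs it: shrinking the clopen set on which two unitaries disagree does not shrink the norm of their difference. Let $D_n=\bigsqcup_{t,k}h^{J_{t,k}-1}(Y^{(n)}_{t,k})$. Since $u$ and $u_n$ agree off $D_n$, one has $u-u_n=u_n(w-\chi_{D_n})$, where $w=u_n^*u\,\chi_{D_n}$ is a unitary of the corner $\chi_{D_n}C^*(\bZ,X,h)\chi_{D_n}$ conjugating $C(D_n)$ by the homeomorphism $h^{-1}\circ h_n$ of $D_n$; whenever that homeomorphism moves a point, choose a clopen $U\subset D_n$ with $(h^{-1}\circ h_n)(U)\cap U=\varnothing$ and conclude $1=\|w\chi_U w^*-\chi_U\|\le 2\|w-\chi_{D_n}\|=2\|u-u_n\|$, so $\|u-u_n\|\ge 1/2$ for every $n$. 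The cleanest global obstruction uses this paper's own results: were $\overline{\bigcup_n A_n}=C^*(\bZ,X,h)$, the crossed product would be AF and hence have trivial $K_1$, contradicting Theorem \ref{thmK1Formula}, which gives $K_1(C^*(\bZ,X,h))\cong C(Z,\bZ)\neq 0$. (Already for a single periodic orbit of length $J$, where $C^*(\bZ,X,h)\cong M_J(C(\bT))$, your construction produces only $M_J(\bC)$.) The actual proof must retain the genuine first-return data: the loop unitaries $\sum_k u^{J_{t,k}}\chi_{Y_{t,k}}$ have full spectrum $\bT$ (via the dual action), and it is these, together with the tower matrix units, that generate the circle algebras of Corollary \ref{coroCircleAlgebraApproximation}; the hard part, which your plan never confronts, is that on the top levels $u$ permutes the various bases rather than returning each column to itself. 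Your looped $u_n$ erases exactly this data, which is why you have rediscovered Theorem \ref{thmAZisAF} rather than proved Theorem \ref{thmPreviousMainTheorem}.
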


The following is a consequence of the proof of Theorem \ref{thmPreviousMainTheorem}. By ``circle algebra", we mean an algebra isomorphic to a finite direct sum of matrices and matrices over $ C ( S^1 ) $.

\begin{coro}\label{coroCircleAlgebraApproximation}
Let $ ( X , h , Z ) $ be a fiberwise essentially minimal zero-dimensional system, let $ \sP $ be a partition of $ X $, let $ a_1 , \ldots , a_n \in C^* ( \bZ , X , h ) $, and let $ \eps > 0 $. Then there is a circle algebra $ A \subset C^* ( \bZ , X , h ) $ and a partition $ \sP' $ of $ X $ that is finer than $ \sP $ such that 
\begin{enumerate}[(a)]
\item The diagonal matrices of $ A $ are $ C ( \sP' ) $.
\item For each $ k \in \{ 1 , \ldots , n \} $, there is a $ b_k \in A $ such that $ \| a_k - b_k \| < \eps $.
\end{enumerate}
\end{coro}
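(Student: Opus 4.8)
The plan is to read the statement off from the inductive-limit structure established in the proof of Theorem \ref{thmPreviousMainTheorem}. That proof does not merely assert that $ C^* ( \bZ , X , h ) $ is an A$\bT$-algebra; it produces an explicit increasing sequence of circle subalgebras. Concretely, using Theorem \ref{thmFiberwiseIffAdmitsPartitions} to obtain systems of finite first return time maps subordinate to an ever-finer sequence of partitions, one builds systems $ \sS_1 , \sS_2 , \ldots $ and associated circle subalgebras $ A_{ \sS_1 } \subseteq A_{ \sS_2 } \subseteq \cdots \subseteq C^* ( \bZ , X , h ) $ such that (i) each $ A_{ \sS_m } $ has diagonal matrices equal to $ C ( \sP_1 ( \sS_m ) ) $, (ii) the partitions $ \sP_1 ( \sS_m ) $ increase and generate the topology of $ X $, and (iii) $ \overline{ \bigcup_m A_{ \sS_m } } = C^* ( \bZ , X , h ) $. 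Granting this, I would obtain the corollary by selecting a single index $ m $ large enough to serve both conclusions at once, and setting $ A = A_{ \sS_m } $ and $ \sP' = \sP_1 ( \sS_m ) $.

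For conclusion (b), I would use density together with nestedness. By (iii), for each $ k \in \{ 1 , \ldots , n \} $ there is an index $ m_k $ and an element $ b_k \in A_{ \sS_{ m_k } } $ with $ \| a_k - b_k \| < \eps $. Setting $ m \geq \max_k m_k $ and using the inclusions $ A_{ \sS_{ m_k } } \subseteq A_{ \sS_m } $, every $ b_k $ already lies in $ A_{ \sS_m } $, so a single circle algebra simultaneously $ \eps $-approximates all of $ a_1 , \ldots , a_n $.

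For conclusion (a), I would use that a nested sequence of clopen partitions generating the topology is cofinal under refinement: since each element of $ \sP $ is clopen and its indicator is approximated in $ \overline{ \bigcup_m C ( \sP_1 ( \sS_m ) ) } = C ( X ) $ by $ \{ 0 , 1 \} $-valued functions, for $ m $ large each element of $ \sP $ is exactly $ \sP_1 ( \sS_m ) $-measurable, i.e.\ $ \sP_1 ( \sS_m ) $ refines $ \sP $. Choosing $ m $ to exceed both this threshold and the one from the previous paragraph, conclusion (a) holds because the diagonal matrices of $ A_{ \sS_m } $ are $ C ( \sP_1 ( \sS_m ) ) $ by (i), and (b) holds as above.

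The main obstacle is not any new estimate but the bookkeeping of extracting the right uniform statement from the proof of Theorem \ref{thmPreviousMainTheorem}: one must verify that the circle algebras produced there can be taken genuinely nested (so that a single $ A_{ \sS_m } $ contains approximants of every $ a_k $) and with diagonals exactly $ C ( \sP_1 ( \sS_m ) ) $ for partitions that exhaust the topology (so that some $ \sP_1 ( \sS_m ) $ refines the prescribed $ \sP $). The genuine analytic content — that each $ a_k $, equivalently the finite sums $ \sum_j f_j u^j $ dense in the crossed product, is approximated by an element of $ A_{ \sS_m } $ once the tower partition is fine, with the circle variable absorbing the wrap-around of $ u $ at the tops of the towers — is already supplied by that proof and is invoked here rather than reproved.
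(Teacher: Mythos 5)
Your proposal matches the paper's treatment: the paper gives no independent argument for this corollary, stating only that it is a consequence of the proof of Theorem \ref{thmPreviousMainTheorem} (Theorem 2.2 of \cite{Herstedt21}), and that is exactly the structure you extract — the circle subalgebras $A_{\sS_m}$ with diagonals $C(\sP_1(\sS_m))$ built from ever-finer systems of finite first return time maps, with the analytic content deferred to that proof. The bookkeeping you add (choosing a single index $m$ so that $\sP_1(\sS_m)$ refines $\sP$, via exact measurability of clopen sets against a generating sequence of partitions, and so that one circle algebra absorbs all $n$ approximants) is the correct and intended way to read the corollary off that construction.
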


We now introduce the concepts important to the dynamical side of the discussion in this paper. Let $ ( X_1 , h_1 ) $ and $ ( X_2 , h_2 ) $ be dynamical systems. By an \emph{orbit map}, we mean a homeomorphism $ F : X_1 \to X_2 $ such that for all $ x \in X_1 $, we have $ F ( \orb_{ h_1 } ( x ) ) = \orb_{ h_2 } ( F ( x ) ) $, where $ \orb_{ h_1 } ( x ) $ denotes the $ h_1 $-orbit of $ x $ (and likewise for $ \orb_{ h_2 } $). We say that $ ( X_1 , h_1 ) $ and $ ( X_2 , h_2 ) $ are \emph{orbit equivalent} if there exists such an $ F $. If the orbit map satisfies $ F \circ h_1 = h_2 \circ F $, we say that $ ( X_1 , h_1 ) $ and $ ( X_2 , h_2 ) $ are \emph{conjugate}. When we consider orbit maps between $ ( X_1 , h_1 , Z_1 ) $ and $ ( X_2 , h_2 , Z_2 ) $ for closed sets $ Z_1 \subset X_1 $ and $ Z_2 \subset X_2 $, we require that $ F ( Z_1 ) = Z_2 ) $.

\begin{defn}
Let $ ( X_1 , h_1 ) $ and $ ( X_2 , h_2 ) $ be dynamical systems and let $ F : X_1 \to X_2 $ be an orbit map. Then there are functions $ \beta, \gamma : X_1 \to \bZ $, called \emph{orbit cocycles}, that satisfy $ ( F \circ h_1^{ \beta ( x ) } ) ( x ) = ( h_2 \circ F ) ( x ) $ and $ ( h_2^{ \gamma ( x ) } \circ F ) ( x ) = ( F \circ h_1 ) ( x ) $.
\end{defn}

The following is a generalization of Definition 1.3 of \cite{GiordanoPutnamSkau95} from the minimal case to the fiberwise essentially minimal case. 

\begin{defn}
Let $ ( X_1 , h_1 , Z_1 ) $ and $ ( X_2 , h_2 , Z_2 ) $ be fiberwise essentially minimal zero-dimensional systems. We say that $ ( X_1 , h_1 , Z_1 ) $ and $ ( X_2 , h_2 , Z_2 ) $ are \emph{strong orbit equivalent} if there is an orbit map $ F : X_1 \to X_2 $ such that the orbit cocycles $ \beta , \gamma : X_1 \to \bZ $ are continuous on $ X_1 \setminus Z_1 $.
\end{defn}

\section{$K$-Theory}

In this section we discuss the $ K $-theory of the crossed products associated to fiberwise essentially minimal zero-dimensional systems. For some references on work done in the minimal case, see \cite{Putnam90} and \cite{GiordanoPutnamSkau95}. For a reference on work done in the essentially minimal case, see \cite{HermanPutnamSkau92}.

\begin{defn}\label{defnOrderedGroup}
An \emph{ordered group} is a pair $ ( G , G^+ ) $, where $ G $ is a countable abelian group and $ G^+ $ is a subset of $ G $, called the \emph{positive cone}, that satisfies the following:
\begin{enumerate}[(a)]
\item For all $ g_1 , g_2 \in G^+ $, we have $ g_1 + g_2 \in G^+ $.
\item For all $ g \in G $, there are $ g_1 , g_2 \in G^+ $ such that $ g = g_1 - g_2 $.
\item The identity of $ G $ is the only element in both $ G^+ $ and $ -G^+ $.
\end{enumerate}
We call $ e \in G^+ $ an \emph{order} unit if for all $ g \in G^+ $, there is some $ n \in \bZ_{ > 0 } $ such that $ n e - g \in G^+ $. 
\end{defn}

Given an ordered group $ ( G , G^+ ) $, we may write $ g \geq 0 $ to denote $ g \in G^+ $. The notation $ g_1 \geq g_2 $ means that $ g_1 - g_2 \in G^+ $. By a homomorphism of ordered groups $ ( G_1 , G_1^+ ) $ and $ ( G_2 , G_2^+ ) $, we mean a homomorphism of groups $ \vphi : G_1 \to G_2 $ such that $ \vphi ( G_1^+ ) \subset G_2^+ $.

When we fix a particular order unit $ e \in G^+ $, we may write the triple $ ( G , G^+ , e ) $ and call this an ordered group with distinguished order unit. By a homomorphism of ordered groups with distinguished order units $ ( G_1 , G_1^+ , e_1 ) $ and $ ( G_2 , G_2^+ , e_2 ) $, we mean a homomorphism of ordered groups $ \vphi : G_1 \to G_2 $ such that $ \vphi ( e_1 ) = e_2 $.

We introduce notation important to the following proposition, which is Proposition 2.2 of \cite{Herstedt21}, and a direct consequence of Theorem 2.4 of \cite{PimsnerVoiculescu80}. Let $ \sT $ denote the Toeplitz algebra, the universal $ C^* $-algebra generated by a single isometry $ s $. Let $ A $ be a unital $ C^* $-algebra and let $ \alpha $ be an automorphism of $ A $, and let $ u $ be the standard unitary of $ C^*( \bZ , A , \alpha ) $. We denote by $ \sT ( A , \alpha ) $ the Toeplitz extension of $ A $ by $ \alpha $, which is the subalgebra of $ C^* ( \bZ , A , \alpha ) \otimes \sT $ generated by $ A \otimes 1 $ and $ u \otimes s $. The ideal generated by $ A \otimes ( 1 - s s^* ) $ is isomorphic to $ A \otimes \sK $, and the quotient by this ideal is isomorphic to $ C^* ( \bZ , A , \alpha) $.

\begin{prop}\label{propKTheoryExactSequence}
Let $ ( X , h ) $ be a zero-dimensional system. Let $ \alpha $ be the automorphism of $ C ( X ) $ induced by $ h $; that is, $ \alpha $ is defined by $ \alpha ( f ) ( x ) = f ( h^{ -1 } ( x ) ) $ for all $ f \in C ( X ) $ and all $ x \in X $. Let $ \delta $ be the connecting map obtained from the exact sequence
\begin{center}
\begin{tikzcd}
0 \arrow[r] & C(X) \otimes \sK \arrow[r] & \sT(C(X),\alpha) \arrow[r] & C^*(\bZ,A,\alpha) \arrow[r] & 0,
\end{tikzcd}
\end{center}
where $ K_0 ( C ( X ) \otimes \sK ) $ is identified with $ K_0 ( C ( X ) ) $ in the standard way. Let $ i :   C(X) \to C^* ( \bZ , X , h ) $ be the natural inclusion. Then there is an exact sequence 
\begin{center}
\begin{tikzcd}
0 \arrow[r] & K_1(C^*(\bZ,X,h)) \arrow[r, "\delta"] & K_0(C(X)) \arrow[r, "\id-\alpha_*"] & K_0(C(X)) \arrow[r, "i_*"] & K_0(C^*(\bZ,X,h)) \arrow[r] & 0.
\end{tikzcd}
\end{center}
\end{prop}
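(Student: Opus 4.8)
The plan is to derive the stated six-term exact sequence as a direct consequence of the Pimsner–Voiculescu exact sequence, which is exactly what the preceding paragraph sets up via the Toeplitz extension $\sT(C(X),\alpha)$. First I would recall that the Pimsner–Voiculescu sequence (Theorem 2.4 of \cite{PimsnerVoiculescu80}) is the cyclic six-term sequence
\begin{center}
\begin{tikzcd}
K_0(C(X)) \arrow[r, "\id-\alpha_*"] & K_0(C(X)) \arrow[r, "i_*"] & K_0(C^*(\bZ,X,h)) \arrow[d] \\
K_1(C^*(\bZ,X,h)) \arrow[u, "\delta"] & K_1(C(X)) \arrow[l, "i_*"] & K_1(C(X)). \arrow[l, "\id-\alpha_*"]
\end{tikzcd}
\end{center}
The key structural input is that $X$ is a totally disconnected compact metrizable space, so $C(X)$ is an AF-algebra (indeed a commutative one); hence $K_1(C(X))=0$. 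My first substantive step is therefore to record this vanishing and to substitute $K_1(C(X))=0$ into the two positions it occupies in the cyclic diagram.

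Once $K_1(C(X))=0$ is inserted, I would unfold the cyclic six-term sequence into a single long exact sequence and observe that the two $K_1(C(X))$ terms collapse. Concretely, the map $K_1(C^*(\bZ,X,h))\to K_1(C(X))=0$ forces the image of $\delta$ on $K_0(C(X))$ to account for the entire kernel structure, and the incoming map $K_1(C(X))=0\to K_1(C^*(\bZ,X,h))$ shows that $\delta$ is injective. So I would verify, reading around the hexagon, that exactness at the four surviving nonzero positions yields precisely: injectivity of $\delta$ at $K_1(C^*(\bZ,X,h))$; exactness at the first $K_0(C(X))$ with $\ker(\id-\alpha_*)=\image\,\delta$; exactness at the second $K_0(C(X))$ with $\ker i_*=\image(\id-\alpha_*)$; and surjectivity of $i_*$ onto $K_0(C^*(\bZ,X,h))$. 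These four assertions are exactly the exactness conditions at the five nodes of the claimed sequence
\begin{center}
\begin{tikzcd}
0 \arrow[r] & K_1(C^*(\bZ,X,h)) \arrow[r, "\delta"] & K_0(C(X)) \arrow[r, "\id-\alpha_*"] & K_0(C(X)) \arrow[r, "i_*"] & K_0(C^*(\bZ,X,h)) \arrow[r] & 0.
\end{tikzcd}
\end{center}

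I would also confirm that the connecting map $\delta$ here is the same $\delta$ named in the statement, namely the one arising from the Toeplitz extension displayed in the proposition; this is the content of the remark attributed to Proposition 2.2 of \cite{Herstedt21}, which identifies the Pimsner–Voiculescu connecting map with the boundary map of that extension and fixes the identification of $K_0(C(X)\otimes\sK)$ with $K_0(C(X))$. Finally I would note that the map labeled $\id-\alpha_*$ is the standard one, where $\alpha_*$ is the automorphism of $K_0(C(X))$ induced by $\alpha$. The main (and essentially only) obstacle is bookkeeping: one must be careful that the directions of the maps and the identification of the connecting homomorphism match the conventions of \cite{PimsnerVoiculescu80}, since a sign or orientation discrepancy there would flip $\id-\alpha_*$ to $\id-\alpha_*^{-1}$; aside from this, the result is a formal extraction from the six-term sequence using $K_1(C(X))=0$, so there is no genuine difficulty beyond citing the correct form of the Pimsner–Voiculescu theorem.
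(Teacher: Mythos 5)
Your proposal is correct and follows exactly the paper's own argument: the paper's proof is the one-line observation that since $K_1(C(X))=0$ (as $X$ is totally disconnected), the stated sequence follows immediately from the Pimsner--Voiculescu six-term sequence (Theorem 2.4 of the cited reference). Your additional care with unfolding the hexagon and matching the connecting-map conventions is sound bookkeeping, but it is the same route.
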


\begin{proof}
Since $ K_1 ( C ( X ) ) = 0 $, this follows immediately from Theorem 2.4 of \cite{PimsnerVoiculescu80}.
\end{proof}

Note that $ K_0 ( C ( X ) ) \cong C ( X , \bZ ) $; we will use this identification throughout the paper. Let $ C ( X , \bZ )^+ $ denote the subset of $ C ( X , \bZ ) $ consisting of $ f $ such that $ f ( x ) \geq 0 $ for all $ x \in X $. Then it is easy to check that $ ( C ( X , \bZ ) , C ( X , \bZ )^+ ) $ is an ordered group and the function $ \chi_X $ is an order unit.

The following is closely related to Proposition 5.1 of \cite{HermanPutnamSkau92}; although the hypotheses are broadened, the proof is essentially the same. Adopting the notation of Proposition \ref{propKTheoryExactSequence}, we denote $ K_0 ( C ( X ) ) / \image ( \id - \alpha_* ) $ by $ K^0 ( X , h ) $.

\begin{prop}
Let $ ( X , h , Z ) $ be a fiberwise essentially minimal zero-dimensional system and adopt the notation of Proposition \ref{propKTheoryExactSequence}. Let $ \pi : C ( X , \bZ ) \to K^0 ( X , h ) $ denote the quotient map. Define $ K^0 ( X , h )^+ = \pi ( C ( X , \bZ )^+ ) $. Then $ ( K^0 ( X , h ) , K^0 ( X , h )^+ , \pi ( 1 ) ) $ is an ordered group with distinguished order unit.
\end{prop}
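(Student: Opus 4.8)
The plan is to verify the three axioms of Definition \ref{defnOrderedGroup} together with the order-unit condition, treating the antisymmetry axiom (c) as the only substantial point and deducing it from the fact that the crossed product is stably finite.

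First, the easy structural facts. Since $ X $ is compact, metrizable, and totally disconnected, $ C ( X , \bZ ) $ is countable, so its quotient $ K^0 ( X , h ) $ is a countable abelian group. Axiom (a) is immediate: if $ \pi ( f_1 ) , \pi ( f_2 ) \in K^0 ( X , h )^+ $ with representatives $ f_1 , f_2 \in C ( X , \bZ )^+ $, then $ f_1 + f_2 \in C ( X , \bZ )^+ $ and $ \pi ( f_1 ) + \pi ( f_2 ) = \pi ( f_1 + f_2 ) $. For axiom (b), given any $ f \in C ( X , \bZ ) $ I would write $ f = f^+ - f^- $ with $ f^\pm = \max ( \pm f , 0 ) \in C ( X , \bZ )^+ $, so that $ \pi ( f ) = \pi ( f^+ ) - \pi ( f^- ) $ exhibits the required difference. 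For the order unit, I note that a nonnegative $ f \in C ( X , \bZ )^+ $ is bounded, say $ 0 \le f \le N $; then $ N \chi_X - f \in C ( X , \bZ )^+ $, so $ N \, \pi ( 1 ) - \pi ( f ) \in K^0 ( X , h )^+ $, which is exactly the order-unit condition for $ \pi ( 1 ) = \pi ( \chi_X ) $.

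The heart of the argument is axiom (c): that $ K^0 ( X , h )^+ \cap ( - K^0 ( X , h )^+ ) = \{ 0 \} $. Here I would pass to the crossed product. By Proposition \ref{propKTheoryExactSequence} the inclusion $ i : C ( X ) \to C^* ( \bZ , X , h ) $ induces a surjection $ i_* : K_0 ( C ( X ) ) = C ( X , \bZ ) \to K_0 ( C^* ( \bZ , X , h ) ) $ with kernel exactly $ \image ( \id - \alpha_* ) $, so $ i_* $ descends to a group isomorphism $ \bar{\imath} : K^0 ( X , h ) \xrightarrow{\sim} K_0 ( C^* ( \bZ , X , h ) ) $ satisfying $ \bar{\imath} \circ \pi = i_* $ and carrying $ \pi ( 1 ) $ to $ [ 1 ] $. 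Because $ i $ is a unital $ * $-homomorphism, $ i_* $ sends the positive cone $ C ( X , \bZ )^+ = K_0 ( C ( X ) )^+ $ into $ K_0 ( C^* ( \bZ , X , h ) )^+ $, and hence $ \bar{\imath} ( K^0 ( X , h )^+ ) \subseteq K_0 ( C^* ( \bZ , X , h ) )^+ $. By Theorem \ref{thmPreviousMainTheorem}, $ C^* ( \bZ , X , h ) $ is an A$\bT$-algebra and is therefore stably finite, so its ordered $ K_0 $-group satisfies $ K_0^+ \cap ( - K_0^+ ) = \{ 0 \} $. Now if $ g \in K^0 ( X , h )^+ \cap ( - K^0 ( X , h )^+ ) $, then $ \bar{\imath} ( g ) $ and $ \bar{\imath} ( - g ) = - \bar{\imath} ( g ) $ both lie in $ K_0 ( C^* ( \bZ , X , h ) )^+ $, forcing $ \bar{\imath} ( g ) = 0 $; injectivity of $ \bar{\imath} $ gives $ g = 0 $.

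The main obstacle is axiom (c), and the work there is really the justification that the crossed product is stably finite: this is where the dynamics enter, but it is entirely packaged in Theorem \ref{thmPreviousMainTheorem} (an A$\bT$-algebra is an inductive limit of algebras $ M_n ( \bC ) $ and $ M_n ( C ( S^1 ) ) $, each stably finite, and stable finiteness passes to inductive limits). Two bookkeeping points deserve care: that the identification $ K_0 ( C ( X ) ) \cong C ( X , \bZ ) $ really matches the $ C^* $-algebraic positive cone with $ C ( X , \bZ )^+ $ (each $ f \in C ( X , \bZ )^+ $ is the rank function of a projection in a matrix algebra over $ C ( X ) $), and that $ \bar{\imath} $ is compatible with $ \pi $ in the sense $ \bar{\imath} \circ \pi = i_* $. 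Neither is difficult, but both must be checked to ensure the orders and order units transport correctly. An alternative, more hands-on route to (c) would reduce it to showing that a nonnegative coboundary dominated by another coboundary has bounded Birkhoff sums and then invoking a Gottschalk--Hedlund-type argument fiber by fiber using essential minimality; I would avoid this in favor of the stable-finiteness argument, since the latter is shorter and reuses Theorem \ref{thmPreviousMainTheorem} directly.
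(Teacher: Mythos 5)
Your proof is correct, but it takes a genuinely different route from the paper on the one substantial point, axiom (c). The paper argues purely dynamically: given $g$ in $K^0(X,h)^+\cap -K^0(X,h)^+$, it lifts to a relation $f-\alpha_*(f)=f_1+f_2\geq 0$, considers the set $E$ where $f$ attains its maximum, notes $h(E)\subset E$, and then uses fiberwise essential minimality (via Theorem 1.1 of \cite{HermanPutnamSkau92}) to conclude that $E$ is a union of full fibers $\psi^{-1}(z)$, hence that $f$ is fiberwise constant, the coboundary vanishes, and $f_1=f_2=0$. This is exactly the Gottschalk--Hedlund-type argument you mention and set aside. Your route instead pushes the problem into the crossed product: you use the Pimsner--Voiculescu sequence (Proposition \ref{propKTheoryExactSequence}) to descend $i_*$ to a group isomorphism $\bar\imath:K^0(X,h)\to K_0(C^*(\bZ,X,h))$ carrying the cone into the cone, and then invoke stable finiteness of the crossed product, which follows from Theorem \ref{thmPreviousMainTheorem} since unital inductive limits of circle algebras are stably finite. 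This is sound and not circular (neither Proposition \ref{propKTheoryExactSequence} nor Theorem \ref{thmPreviousMainTheorem} depends on the present proposition), and it has the side benefit of already establishing most of what Theorem \ref{thmK0isomK0} needs. What it costs is reliance on the heavy A$\bT$ theorem plus the standard facts that A$\bT$-algebras are stably finite and that stable finiteness forces $K_0^+\cap(-K_0^+)=\{0\}$; the paper's argument is more elementary and self-contained at this stage, stays entirely on the dynamical side, and proves the stronger and independently useful fact that any $f\in C(X,\bZ)$ with $f-\alpha_*(f)\geq 0$ is constant on fibers (the same mechanism reappears in the proof of Theorem \ref{thmK1Formula}). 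Your bookkeeping points (that $K_0(C(X))^+$ matches $C(X,\bZ)^+$ and that $\bar\imath\circ\pi=i_*$) are the right ones to flag, and your treatment of (a), (b), and the order unit is fine --- indeed slightly more detailed than the paper, which disposes of them by surjectivity of $\pi$.
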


\begin{proof}
We check the conditions of Definition \ref{defnOrderedGroup}. Conditions (a) and (b) follow from surjectivity of $ \pi $. For condition (c), let $ g \in K^0 ( X , h )^+ \cap -K^0 ( X , h )^+ $. This means that there is $ f_1 \in C ( X , \bZ )^+ $ such that $ \pi ( f_1 ) = g $ and $ f_2 \in C ( X , \bZ )^+ $ such that $ \pi ( - f_2 ) = g $. But then $ \pi ( f_1 + f_2 ) = 0 $ and so $ f_1 + f_2 \in \image ( \id - \alpha_* ) $. Let $ f \in C ( X , \bZ ) $ satisfy $ f - \alpha_* ( f ) = f_1 + f_2 $. Let $ E = f^{ -1 } ( \max_{ x \in X } f ( x ) ) $. Since $ f - \alpha_* ( f ) \geq 0 $, we must have $ h ( E ) \subset E $. Let $ \psi $ be as in Definition \ref{defnFiberwiseEssentiallyMinimal} and let $ z \in \psi ( E ) $ and define $ E_z = E \cap \psi^{ -1 } ( z ) $. Since $ h ( E_z ) \subset E_z $, $ E_z $ is invariant so must intersect the minimal set. But then by Theorem 1.1 of \cite{HermanPutnamSkau92}, since $ E_z \neq \varnothing $, we have $ \bigcup_{ n \in \bZ_{ \geq 0 } } h^n ( E_z ) = \psi^{ -1 } ( z ) $, and so $ E_z = \psi^{ -1 } ( z ) $. Since this holds for all $ z \in \psi ( E ) $, we see that $ f $ is constant on $ \psi^{ -1 } ( z ) $ for all $ z \in Z $. Since $ \psi^{ - 1 } ( z ) $ is invariant for all $ z \in Z $, we must have $ f = \alpha_* ( f ) $, and so $ f_1 + f_2 = 0 $, and since $ f_1 , f_2 \geq 0 $, we see $ f_1 = f_2 = 0 $, and finally we see $ g = 0 $. This proves condition (c).

Finally, the fact that $ \pi ( 1 ) $ is an order unit is also clear from the surjectivity of $ \pi $.
\end{proof}

\begin{thm}\label{thmK0isomK0}
Let $ ( X , h ) $ be a zero-dimensional system. Then adopting the notation of Proposition \ref{propKTheoryExactSequence}, we have $ ( K_0 ( C^* ( \bZ , X , h ) ) , K_0 ( C^* ( \bZ , X , h ) )^+ , 1 ) \cong ( K^0 ( X , h ) , K^0 ( X , h )^+ , 1 ) $.
\end{thm}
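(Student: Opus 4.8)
The group isomorphism and the matching of order units are immediate from the Pimsner--Voiculescu sequence of Proposition \ref{propKTheoryExactSequence}. Exactness at the two rightmost terms shows that $ i_* : K_0 ( C ( X ) ) \to K_0 ( C^* ( \bZ , X , h ) ) $ is surjective with kernel exactly $ \image ( \id - \alpha_* ) $, so $ i_* $ descends to a group isomorphism $ \overline{ i_* } : K^0 ( X , h ) \to K_0 ( C^* ( \bZ , X , h ) ) $ with $ \overline{ i_* } \circ \pi = i_* $ under the identification $ K_0 ( C ( X ) ) \cong C ( X , \bZ ) $. Since $ i $ is unital, $ \overline{ i_* } ( \pi ( 1 ) ) = i_* ( [ 1 ] ) = [ 1 ] $, so $ \overline{ i_* } $ carries the distinguished order unit to the class of the identity. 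It therefore remains to check that $ \overline{ i_* } $ is an order isomorphism, and since $ \overline{ i_* } \circ \pi = i_* $ and $ K^0 ( X , h )^+ = \pi ( C ( X , \bZ )^+ ) $, this reduces to the single identity $ i_* ( C ( X , \bZ )^+ ) = K_0 ( C^* ( \bZ , X , h ) )^+ $.

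The inclusion $ i_* ( C ( X , \bZ )^+ ) \subseteq K_0 ( C^* ( \bZ , X , h ) )^+ $ is the easy direction: a positive element of $ C ( X , \bZ ) \cong K_0 ( C ( X ) ) $ is the class of an honest projection in a matrix algebra over $ C ( X ) $, and $ i $ sends projections to projections, so its image lies in the positive cone of the crossed product.

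For the reverse inclusion the plan is to push a projection into a circle subalgebra whose diagonal sits inside $ C ( X ) $. First I would take $ g \in K_0 ( C^* ( \bZ , X , h ) )^+ $ and write $ g = [ q ] $ for a projection $ q \in M_n ( C^* ( \bZ , X , h ) ) $. Applying Corollary \ref{coroCircleAlgebraApproximation} (in the matrix amplification, where the same argument applies) with $ \eps $ small yields a circle algebra $ A \subset C^* ( \bZ , X , h ) $ whose diagonal is $ C ( \sP' ) $ for some partition $ \sP' $ of $ X $, together with a self-adjoint $ b \in M_n ( A ) $ with $ \| q - b \| $ small; functional calculus then gives a projection $ q' \in M_n ( A ) $ with $ [ q' ] = [ q ] = g $. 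This is the step where fiberwise essential minimality enters, through the circle algebra approximation.

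The crux is then the $ K_0 $-structure of the circle algebra. Because $ A $ is a finite direct sum of matrix algebras and matrix algebras over $ C ( S^1 ) $, every projection in $ M_n ( A ) $ has constant integer rank in each summand and is therefore Murray--von Neumann equivalent within $ M_n ( A ) $ to a scalar-diagonal projection lying in $ M_n ( C ( \sP' ) ) $; in particular $ [ q' ] = [ p_0 ] $ in $ K_0 ( A ) $ for a projection $ p_0 \in M_n ( C ( \sP' ) ) $. Viewing $ C ( \sP' ) $ as a subalgebra of $ C ( X ) $, the class $ f = [ p_0 ] $ maps to the positive rank function in $ C ( X , \bZ )^+ $, and the two factorizations of the inclusion $ C ( \sP' ) \hookrightarrow C^* ( \bZ , X , h ) $ --- one through $ A $, one through $ C ( X ) $ via $ i $ --- give a commuting square of $ K_0 $-maps; chasing $ [ p_0 ] $ around it yields $ i_* ( f ) = [ q' ] = g $ with $ f \in C ( X , \bZ )^+ $. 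I expect the main obstacle to be exactly this last diagram chase together with the verification that projections in a circle algebra are $ K_0 $-equivalent to diagonal ones; once these are in hand, combining the two inclusions gives the desired order isomorphism.
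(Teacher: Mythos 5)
Your proposal is correct and takes essentially the same route as the paper: the Pimsner--Voiculescu sequence of Proposition \ref{propKTheoryExactSequence} gives the group isomorphism and the order unit, the easy inclusion of cones is noted, and Corollary \ref{coroCircleAlgebraApproximation} is used to push an arbitrary projection into a circle algebra whose diagonal lies in $ C ( X ) $ so as to identify the positive cones. The details you spell out --- functional calculus to replace the approximant by a projection in $ M_n ( A ) $, and equivalence of that projection to a scalar-diagonal one in $ M_n ( C ( \sP' ) ) $ via triviality of vector bundles over $ S^1 $ --- are exactly what the paper compresses into the single assertion that $ p $ is unitarily equivalent to $ \chi_U $ for some compact open $ U \subset X $.
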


\begin{proof}
Since $ \ker ( i_* ) = \image ( \id - \alpha_* ) $, and since $ i : C ( X ) \to C^* ( \bZ , X , h ) $ is the natural inclusion, the map $ i_* $ induces a map $ \vphi : K^0 ( X , h ) \to K_0 ( C^* ( \bZ , X , h ) ) $ which is an isomorphism of groups and satisfies $ K^0 ( X , h )^+ \subset K_0 ( C^* ( \bZ , X , h ) )^+ $. 

Let $ p \in C^* ( \bZ , X , h ) $ be a projection. By applying Corollary \ref{coroCircleAlgebraApproximation} with $ a_1 = p $ and $ \eps = 1/2 $, $ p $ is unitarily equivalent to $ \chi_U $ for some compact open $ U \subset X $. Let $ q $ be the image of $ \chi_U $ under the quotient map $ C ( X ) \to C ( X ) / \image ( \id - \alpha ) $. Then $ [ \vphi ( q ) ] = [ \chi_U ] = [ p ] $. Repeating this argument for $ M_n ( C^* ( \bZ , X , h ) ) $, we see that $ K_0 ( C^* ( \bZ , X , h ) )^+ \subset K^0 ( X , h )^+ $.

Finally, that $ \vphi ( 1 ) = 1 $ is clear, proving the theorem.
\end{proof}

What we have also shown in the previous proof is the following.

\begin{coro}\label{coroSurjectiveOrderedGroupMap}
Let $ ( X , h , Z ) $ be a fiberwise essentially minimal zero-dimensional system. Let $ i : C ( X ) \to C^* ( \bZ , X , h ) $ denote the canonical inclusion. Then the induced map $ i_* : K_0 ( C ( X ) ) \to K_0 ( C^* ( \bZ , X , h ) ) $ is surjective as a map between ordered groups.
\end{coro}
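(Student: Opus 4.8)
The plan is to read off both halves of the claim from material already in hand: the group-level surjectivity comes directly from the Pimsner--Voiculescu sequence, while the statement that the positive cones match is exactly the computation carried out in the proof of Theorem~\ref{thmK0isomK0}. I take ``surjective as a map between ordered groups'' to mean that $ i_* $ is onto as a group homomorphism \emph{and} that $ i_* ( C ( X , \bZ )^+ ) = K_0 ( C^* ( \bZ , X , h ) )^+ $, so that $ i_* $ realizes the target as an ordered quotient of $ ( C ( X , \bZ ) , C ( X , \bZ )^+ ) $.

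For the group-level surjectivity I would simply invoke Proposition~\ref{propKTheoryExactSequence}: the exact sequence there terminates with
\[
K_0 ( C ( X ) ) \xrightarrow{\ i_*\ } K_0 ( C^* ( \bZ , X , h ) ) \to 0 ,
\]
so $ i_* $ is onto. One containment of the cone equality is also automatic: since $ i $ is a unital $ * $-homomorphism, $ i_* $ carries classes of projections to classes of projections, whence $ i_* ( C ( X , \bZ )^+ ) \subset K_0 ( C^* ( \bZ , X , h ) )^+ $, using the identification $ K_0 ( C ( X ) ) \cong C ( X , \bZ ) $ and the description of $ C ( X , \bZ )^+ $ recorded after Proposition~\ref{propKTheoryExactSequence}.

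The substantive direction, $ K_0 ( C^* ( \bZ , X , h ) )^+ \subset i_* ( C ( X , \bZ )^+ ) $, is precisely what the second paragraph of the proof of Theorem~\ref{thmK0isomK0} establishes. Given $ g \in K_0 ( C^* ( \bZ , X , h ) )^+ $, I would represent it by a projection $ p $ in some matrix amplification $ M_n ( C^* ( \bZ , X , h ) ) $ and apply Corollary~\ref{coroCircleAlgebraApproximation} (with $ a_1 = p $ and $ \eps = 1/2 $) to approximate $ p $ within a circle algebra. A projection in a circle algebra is, after a unitary perturbation, equivalent to a projection in its diagonal $ C ( \sP' ) $-part, so $ [ p ] = i_* [ f ] $ for some $ f \in C ( X , \bZ )^+ $ (in the case $ n = 1 $ one may take $ f = \chi_U $ for a compact open $ U \subset X $, as in the proof of Theorem~\ref{thmK0isomK0}). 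Hence $ g = [ p ] \in i_* ( C ( X , \bZ )^+ ) $. Combining the two inclusions gives $ i_* ( C ( X , \bZ )^+ ) = K_0 ( C^* ( \bZ , X , h ) )^+ $, which together with the group-level surjectivity is the assertion of the corollary.

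Because all the analytic input is already packaged in Corollary~\ref{coroCircleAlgebraApproximation} and reused from the proof of Theorem~\ref{thmK0isomK0}, there is no genuine obstacle here; the corollary is a repackaging of that proof. The only point demanding slight care is that the reduction of an arbitrary projection to a diagonal one must be performed in every matrix amplification $ M_n ( C^* ( \bZ , X , h ) ) $, not merely in $ C^* ( \bZ , X , h ) $ itself — but this is exactly how Corollary~\ref{coroCircleAlgebraApproximation} is applied in the proof of Theorem~\ref{thmK0isomK0}, so the same reasoning transfers verbatim.
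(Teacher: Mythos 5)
Your proposal is correct and follows essentially the same route as the paper: the paper derives this corollary directly from its proof of Theorem~\ref{thmK0isomK0}, which likewise combines the Pimsner--Voiculescu sequence (Proposition~\ref{propKTheoryExactSequence}) for group-level surjectivity with the Corollary~\ref{coroCircleAlgebraApproximation} argument (applied with $a_1 = p$, $\eps = 1/2$, in all matrix amplifications) to show every positive $K_0$-class is $[\chi_U]$ for a compact open $U \subset X$. Your explicit unpacking of ``surjective as a map between ordered groups'' as $i_*(C(X,\bZ)^+) = K_0(C^*(\bZ,X,h))^+$ together with ordinary surjectivity matches exactly what the paper's proof establishes.
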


The following definition is from Section 2 of \cite{Poon90}, later studied in the minimal case in \cite{Putnam89}. These have been referred to as ``large subalgebras" in the literature.

\begin{defn}\label{defnAZ}
Let $ ( X , h , Z ) $ be a fiberwise essentially minimal zero-dimensional system. We define $ A_Z $ to be the $ C^* $-algebra generated by $ C ( X ) $ and $ u C_0 ( X \setminus Z ) $.
\end{defn}

The following theorem is contained in Theorem 2.3 of \cite{Poon90}. We provide a direct proof in our context for the reader, which helps give an idea of the AF structure of the large subalgebra.

\begin{thm}\label{thmAZisAF}
Let $ ( X , h , Z ) $ be a fiberwise essentially minimal zero-dimensional system. Then $ A_Z $ is an AF-algebra.
\end{thm}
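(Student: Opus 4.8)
The plan is to exhibit $A_Z$ as the closed increasing union of finite-dimensional subalgebras coming from a compatible sequence of systems of finite first return time maps, arranged so that the closed set $Z$ always lies at the \emph{top} of the towers. This is the feature that makes the excluded ``wrap-around'' partial isometries---precisely the ones unavailable in $u C_0 ( X \setminus Z )$---disappear from the construction, and it is the reason the large subalgebra is AF rather than A$\bT$.

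First I would fix a sequence of partitions $ \sP^{ (1) } , \sP^{ (2) } , \ldots $ of $ X $ that is increasingly fine and generates the topology, and invoke Theorem \ref{thmFiberwiseIffAdmitsPartitions} to produce for each $ n $ a system of finite first return time maps subordinate to $ \sP^{ (n) } $, with return data $ Y^{ (n) }_{ t , k } $ and $ J^{ (n) }_{ t , k } $ as in Definition \ref{defnSystemOfFiniteReturnTimeMaps}. Writing $ W_n = \bigsqcup_{ t , k } h^{ J^{ (n) }_{ t , k } - 1 } ( Y^{ (n) }_{ t , k } ) $ for the union of the top levels of the towers of the $ n $-th system, I would arrange (refining and telescoping the systems as necessary) that the induced partitions $ \sP_1 ( \sS^{ (n) } ) $ refine one another and generate the topology, that $ Z \subset W_n $ for every $ n $, that $ W_{ n + 1 } \subset W_n $, and that $ \bigcap_n W_n = Z $. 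The first condition says $ Z $ meets only top levels, and the last two say those top levels shrink down to exactly $ Z $. The hard part will be producing this sequence with all four properties simultaneously: the nesting and the identity $ \bigcap_n W_n = Z $ are where fiberwise essential minimality is used in an essential way, via the Bratteli--Vershik machinery of \cite{Herstedt21}.

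Next, after subdividing each base $ Y^{ (n) }_{ t , k } $ into atoms of $ \sP^{ (n) } $ (which does not change the return time $ J^{ (n) }_{ t , k } $), I would attach to each base atom $ a $ of height $ J ( a ) $ the matrix units $ e^{ (a) }_{ ij } = u^{ \, i - j } \chi_{ h^j ( a ) } $ for $ 0 \le i , j \le J ( a ) - 1 $. For $ i > j $ this element factors as a product of single-step maps $ u \chi_{ h^\ell ( a ) } $ with $ \ell \le J ( a ) - 2 $, and since $ Z \subset W_n $ each such level $ h^\ell ( a ) $ is disjoint from $ Z $; hence every $ e^{ (a) }_{ ij } $ lies in the algebra generated by $ C ( X ) $ and $ u C_0 ( X \setminus Z ) $, that is, in $ A_Z $. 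A direct check of the matrix-unit relations then shows that the $ C^* $-algebra $ A_n $ they generate, having $ C ( \sP_1 ( \sS^{ (n) } ) ) $ as its diagonal, is isomorphic to $ \bigoplus_a M_{ J ( a ) } ( \bC ) $, a finite-dimensional subalgebra of $ A_Z $.

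Finally I would verify $ A_Z = \overline{ \bigcup_n A_n } $. For the inclusion $ A_n \subset A_{ n + 1 } $: since $ \sP_1 ( \sS^{ (n+1) } ) $ refines $ \sP_1 ( \sS^{ (n) } ) $ and $ W_{ n + 1 } \subset W_n $, each single-step map of an $ n $-th tower is supported off $ W_n \supset W_{ n + 1 } $, so it splits as a finite sum of single-step maps of $ ( n + 1 ) $-st towers, placing the generators of $ A_n $ inside $ A_{ n + 1 } $. For density: each $ A_n \subset A_Z $, while conversely $ \bigcup_n C ( \sP_1 ( \sS^{ (n) } ) ) $ is dense in $ C ( X ) $, and for any compact open $ E \subset X \setminus Z $ the relation $ \bigcap_n W_n = Z $ together with compactness of $ E $ forces $ E \cap W_n = \varnothing $ for all large $ n $, whence $ u \chi_E $ is a finite sum of the up-maps generating $ A_n $. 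Thus all generators of $ A_Z $ lie in $ \overline{ \bigcup_n A_n } $, and $ A_Z $, being a closed increasing union of finite-dimensional algebras, is AF.
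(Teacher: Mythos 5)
Your proposal is correct and takes essentially the same route as the paper's proof: an increasing sequence of systems of finite first return time maps whose tower decompositions span nested finite-dimensional matrix-unit algebras, with the wrap-around partial isometries (exactly those whose source meets $Z$) excluded, and with union dense in $A_Z$ by the generating property of the partitions together with the shrinking of the distinguished levels down to $Z$. The only differences are cosmetic or to your credit: the paper keeps $Z$ in the tower bases $\bigsqcup_t X_t^{(n)}$ rather than in the top levels $W_n$, and your explicit requirements $W_{n+1}\subset W_n$ and $\bigcap_n W_n = Z$ are precisely the nesting and shrinking conditions that the paper's proof uses implicitly (via its citations to \cite{Herstedt21}) when it asserts $A^{(n)}\subset A^{(n+1)}$ and $\bigcap_n Z^{(n)} = Z$.
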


\begin{proof}
Let $ ( \sP^{ ( n ) } ) $ be a generating sequence of partitions of $ X $. For each $ n \in \bZ_{ > 0 } $, we inductively define systems $ \systemarg{ ( n ) } $ of finite first return time maps. First, let $ \systemarg{ ( 1 ) } $ be any system of finite first return time maps subordinate to $ \sP^{ ( n ) } $ such that $ \sP_1 ( \sS^{ ( 1 ) } ) $ is finer than $ \sP^{ ( 1 ) } $ and such that $ \bigsqcup_{ t = 1 }^{ T^{ ( 1 ) } } X_t^{ ( 1 ) } \supset Z $ (the former is possible by Proposition 1.14 of \cite{Herstedt21} and the latter is possible by Lemma 4.12 of \cite{Herstedt21}). Now, let $ n \in \bZ_{ > 0 } $ and suppose we have chosen $ \systemarg{ ( n ) } $. Let $ \systemarg{ ( n + 1 ) } $ be any system of finite first return time maps subordinate to $ \sP^{ ( n + 1 ) } $ such that $ \sP_1 ( \sS^{ ( n + 1 ) } ) $ is finer than $ \sP^{ ( n + 1 ) } $ and finer than $ \sP_1 ( \sS^{ ( n ) } ) $ and such that $ \bigsqcup_{ t = 1 }^{ T^{ ( n + 1 ) } } X_t^{ ( n + 1 ) } \supset Z $.

Let $ n \in \bZ_{ > 0 } $. Let $ A^{ ( n ) } $ be the finite dimensional $ C^* $-subalgebra of $ C^* ( \bZ , X , h ) $ spanned by the matrix units $ u^{ i - j } \chi_{ h^j ( Y_{ t , k }^{ ( n ) } ) } $ for $ t \in \{ 1 , \ldots , T^{ ( n ) } \} $, $ k \in \{ 1 , \ldots , K_t^{ ( n ) } \} $, and $ i , j \in \{ 0 , \ldots , J_{ t , k }^{ ( n ) } - 1 \} $. We see $ A^{ ( n ) } \cong \bigoplus_{ t = 1 }^{ T^{ ( n ) } } \bigoplus_{ k = 1 }^{ K_t^{ ( n ) } } M_{ J_{ t , k }^{ ( n ) } } $. Notice that $ C ( \sP_1 ( \sS^{ ( n ) } ) ) \subset A^{ ( n ) } $ as the diagonal matrices. Set $ Z^{ ( n ) } = \bigsqcup_{ t = 1 }^{ T^{ ( n + 1 ) } } X_t^{ ( n ) } $ and then notice that $ u C ( X \setminus Z^{ ( n ) } ) \subset A^{ ( n ) } $ as the superdiagonal matrices, so $ A^{ ( n ) } $ is generated by $ C ( \sP_1 ( \sS^{ ( n ) } ) ) $ and $ u C ( X \setminus Z^{ ( n ) } ) $.

Notice that $ A^{ ( n ) } \subset A^{ ( n + 1 ) } $, so we get a directed system of finite dimensional $ C^* $-algebras, whose limit $ A^{ ( \infty ) } $ contains $ C ( X ) $ since $ ( \sP_1 ( \sS^{ ( n ) } ) ) $ is a generating sequence of partitions and since $ \bigcap_{ n = 1 }^\infty Z^{ ( n ) } = Z $, we have $ u C ( X \setminus Z^{ ( n ) } ) \to u C_0 ( X \setminus Z ) \subset A_Z $. It now clear that $ A^{ ( \infty ) } $ is generated by $ C ( X ) $ and $ u C_0 ( X \setminus Z ) $, and is therefore equal to $ A_Z $.
\end{proof}

The following is Lemma 4.2 from \cite{Putnam89}.

\begin{lemma}\label{lemmaProjectionsZeroOnZn}
Adopt the notation of Theorem \ref{thmAZisAF} and its proof. Let $ p $ be a projection in $ C ( X ) \cap A^{ ( n ) } $ and suppose that $ p = 0 $ on $ Z^{ ( n ) } $. Then $ \alpha ( p ) \in C ( X ) \cap A^{ ( n ) } $ and $ [ \alpha ( p ) ] = [ p ] $ in $ K_0 ( A^{ ( n ) } ) $.
\end{lemma}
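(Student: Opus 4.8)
The plan is to realize $p$ as the indicator of a union of tower levels that avoids the tops, and then to observe that $\alpha$ simply shifts each such level one step up its own tower. First I would record that $C(X)\cap A^{(n)}=C(\sP_1(\sS^{(n)}))$: since $C(X)$ is abelian while the diagonal matrix units of $A^{(n)}$ are exactly the mutually orthogonal projections $\chi_{h^j(Y_{t,k}^{(n)})}$, any $f\in C(X)\cap A^{(n)}$ commutes with the full diagonal masa of $A^{(n)}$ and must therefore be diagonal, i.e. constant on each level $h^j(Y_{t,k}^{(n)})$ of $\sP_1(\sS^{(n)})$. Thus $p=\chi_U$ for a clopen $U$ that is a union of levels. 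Since $uC(X\setminus Z^{(n)})$ is the superdiagonal of $A^{(n)}$, the set $Z^{(n)}$ is precisely the union of the top levels $h^{J_{t,k}^{(n)}-1}(Y_{t,k}^{(n)})$, so the hypothesis $p=0$ on $Z^{(n)}$ forces every level occurring in $U$ to have index $j\le J_{t,k}^{(n)}-2$.

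Next I would compute $\alpha(p)$. Because $\alpha(\chi_V)=\chi_{h(V)}$, we have $\alpha(p)=\chi_{h(U)}$, and $h$ sends a level $h^j(Y_{t,k}^{(n)})$ of $U$ to $h^{j+1}(Y_{t,k}^{(n)})$. As $j\le J_{t,k}^{(n)}-2$, the image $h^{j+1}(Y_{t,k}^{(n)})$ is again a level of $\sP_1(\sS^{(n)})$, lying in the same tower $(t,k)$; in particular no return set $h^{J_{t,k}^{(n)}}(Y_{t,k}^{(n)})\subset X_t^{(n)}$ is produced. Hence $h(U)$ is once more a union of levels, so $\alpha(p)=\chi_{h(U)}\in C(\sP_1(\sS^{(n)}))=C(X)\cap A^{(n)}$, which is the first assertion.

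For the $K_0$ statement I would use $K_0(A^{(n)})\cong\bigoplus_{t,k}\bZ$, the class $[\chi_{h^j(Y_{t,k}^{(n)})}]$ being the generator $g_{t,k}$ of the $(t,k)$ summand for every $j$. Writing $U=\bigsqcup_{t,k}\bigsqcup_{j\in S_{t,k}}h^j(Y_{t,k}^{(n)})$ with $S_{t,k}\subset\{0,\ldots,J_{t,k}^{(n)}-2\}$ gives $[p]=\sum_{t,k}|S_{t,k}|\,g_{t,k}$. The computation above identifies $h(U)$ with the index sets $S_{t,k}+1$ in the same towers, of the same cardinalities, so $[\alpha(p)]=\sum_{t,k}|S_{t,k}|\,g_{t,k}=[p]$.

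The one genuinely delicate point, which I expect to be the main obstacle, is exactly the step forbidding $\alpha$ from carrying a level of $U$ onto a return set $h^{J_{t,k}^{(n)}}(Y_{t,k}^{(n)})$: such a set lies inside the base $X_t^{(n)}$ and need not be a union of levels of $\sP_1(\sS^{(n)})$, since the first-return map need not respect the base partition, so $\alpha(p)$ would then escape $C(X)\cap A^{(n)}$ and the block-by-block count would be disturbed. It is precisely the exclusion of the top levels, encoded in $p=0$ on $Z^{(n)}$, that prevents this and makes both conclusions hold.
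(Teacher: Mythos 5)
Your argument is, in substance, the correct one, and it is worth noting that the paper gives no proof of this lemma at all --- it simply cites Lemma 4.2 of \cite{Putnam89}. Your three steps (identifying $C(X)\cap A^{(n)}$ with the diagonal, observing that $\alpha$ shifts each tower level up by one, and counting levels tower-by-tower in $K_0(A^{(n)})\cong\bigoplus_{t,k}\bZ$) constitute exactly the standard argument behind that citation, and you correctly isolated the crux: $\alpha$ must never push a level of $U$ onto a return image $h^{J_{t,k}^{(n)}}(Y_{t,k}^{(n)})$, since such a set need not be a union of partition elements.

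However, the one assertion you lean on --- that $Z^{(n)}$ ``is precisely the union of the top levels $h^{J_{t,k}^{(n)}-1}(Y_{t,k}^{(n)})$'' --- contradicts the paper's literal definition, and you should be aware that the discrepancy originates in the paper, not in your reasoning. The paper sets $Z^{(n)}=\bigsqcup_t X_t^{(n)}$, and since $X_t^{(n)}=\bigsqcup_k Y_{t,k}^{(n)}$, this is the union of the \emph{bottom} ($j=0$) levels in the indexing $j\in\{0,\ldots,J_{t,k}^{(n)}-1\}$ used for the matrix units in the proof of Theorem \ref{thmAZisAF}; the union of the top levels is $h^{-1}(Z^{(n)})$, which is a different set in general. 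Under that literal reading the lemma is in fact false: take $p$ to be the indicator of a single top level $h^{J_{t,k}^{(n)}-1}(Y_{t,k}^{(n)})$ with $J_{t,k}^{(n)}\geq 2$; then $p$ vanishes on $Z^{(n)}$, yet $\alpha(p)$ is the indicator of $h^{J_{t,k}^{(n)}}(Y_{t,k}^{(n)})$, which lies in $X_t^{(n)}$ but need not be a union of the sets $Y_{t,k'}^{(n)}$, so $\alpha(p)\notin A^{(n)}$. The same off-by-one problem afflicts the paper's claim that $uC(X\setminus Z^{(n)})$ is the superdiagonal of $A^{(n)}$ --- the very claim from which you inferred your description of $Z^{(n)}$. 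The consistent reading (which is Putnam's) indexes the tower levels by $\sP_2(\sS^{(n)})$, i.e.\ $j\in\{1,\ldots,J_{t,k}^{(n)}\}$: then condition (e) of Definition \ref{defnSystemOfFiniteReturnTimeMaps} says $Z^{(n)}=\bigsqcup_t X_t^{(n)}=\bigsqcup_{t,k}h^{J_{t,k}^{(n)}}(Y_{t,k}^{(n)})$ is exactly the union of the top levels, the superdiagonal claim becomes true, and your proof goes through verbatim with $\sP_2$ in place of $\sP_1$. So your proof establishes the intended (and true) statement; to make it airtight you must say explicitly that you are re-indexing the towers in this way, rather than asserting a description of $Z^{(n)}$ that conflicts with its definition.
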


We finally have the following theorem, which tells us enough about the $ K_0 $ structure of the crossed product to be able to prove Theorems \ref{thmMainTheoremPrelude} and Theorem \ref{thmMainTheorem}. The proof follows that of Theorem 4.1 in \cite{Putnam89}.

\begin{thm}\label{thmK0AZIsomorphism}
Let $ ( X , h , Z ) $ be a fiberwise essentially minimal zero-dimensional system. Then $ K_0 ( A_Z ) \cong K_0 ( C^* ( \bZ , X , h ) ) $ as ordered groups.
\end{thm}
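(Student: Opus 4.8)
The plan is to show that the inclusion $A_Z \hookrightarrow C^*(\bZ,X,h)$ induces an isomorphism on $K_0$. Since $A_Z$ is an AF-algebra by Theorem~\ref{thmAZisAF}, its $K_0$ is the direct limit $\varinjlim K_0(A^{(n)})$ of the finite-dimensional building blocks, and every projection in $A_Z$ is equivalent to one in some $A^{(n)}$. So the core of the argument is to understand the image of $K_0(A_Z)$ inside $K_0(C^*(\bZ,X,h))$ and to produce an inverse on the level of ordered groups. I would set up the natural map $j_* : K_0(A_Z) \to K_0(C^*(\bZ,X,h))$ coming from the inclusion $j : A_Z \hookrightarrow C^*(\bZ,X,h)$ and aim to show it is an isomorphism of ordered groups.

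\emph{Surjectivity.} By Theorem~\ref{thmK0isomK0} together with Corollary~\ref{coroSurjectiveOrderedGroupMap}, every class in $K_0(C^*(\bZ,X,h))^+$ is represented by $[\chi_U]$ for a compact open $U \subset X$, and $C(X) \subset A_Z$. Since $i : C(X) \to C^*(\bZ,X,h)$ factors through $A_Z$, the map $j_*$ hits every such class, so $j_*$ is surjective and $K_0(A_Z)^+$ maps onto $K_0(C^*(\bZ,X,h))^+$.

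\emph{Injectivity.} This is where the work lies, and where Lemma~\ref{lemmaProjectionsZeroOnZn} is the essential tool. Recall from Theorem~\ref{thmK0isomK0} that $K_0(C^*(\bZ,X,h)) \cong C(X,\bZ)/\image(\id - \alpha_*)$. So I would show that the composite $C(X,\bZ) \cong K_0(C(X)) \to K_0(A^{(n)}) \to K_0(A_Z) \to K_0(C^*(\bZ,X,h))$ has kernel exactly $\image(\id - \alpha_*)$, forcing $j_*$ to be injective. The inclusion $\image(\id - \alpha_*) \subset \ker$ is automatic since $i_*$ already kills it. For the reverse inclusion, suppose a class in $K_0(A_Z)$ maps to $0$; lifting to a projection $p \in C(X) \cap A^{(n)}$ for suitable $n$, I want to decompose $[p]$ inside $K_0(A^{(n)})$ by splitting $p$ according to whether it is supported on $Z^{(n)}$. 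On the part of $p$ vanishing on $Z^{(n)}$, Lemma~\ref{lemmaProjectionsZeroOnZn} gives $[\alpha(p)] = [p]$ in $K_0(A^{(n)})$, so that the difference $[\chi_U] - [\chi_{h(U)}] = [(\id - \alpha_*)(\chi_U)]$ is already a coboundary; iterating over the building blocks of the directed system shows that any class dying in $K_0(C^*(\bZ,X,h))$ must lie in $\image(\id - \alpha_*)$.

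\emph{Order and order unit.} Finally I would check that $j_*$ respects the positive cones in both directions: $j_*(K_0(A_Z)^+) \subset K_0(C^*(\bZ,X,h))^+$ is immediate from positivity of the inclusion, and the reverse follows from the surjectivity argument above, which produced positive preimages $[\chi_U]$. Since $j_*([1]) = [1]$, the order units match, giving an isomorphism of ordered groups with distinguished order unit. \emph{The main obstacle} I anticipate is controlling the passage from a single projection $p \in C(X) \cap A^{(n)}$ to the full direct-limit description: one must verify that Lemma~\ref{lemmaProjectionsZeroOnZn} can be applied uniformly across the blocks $M_{J_{t,k}^{(n)}}$ and that the resulting coboundary relations assemble coherently in the limit $K_0(A_Z) = \varinjlim K_0(A^{(n)})$, rather than only block-by-block. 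Matching Putnam's argument in \cite{Putnam89} will require tracking how the hypothesis $p = 0$ on $Z^{(n)}$ interacts with the nested sets $Z^{(n)} \downarrow Z$.
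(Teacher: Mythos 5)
Your skeleton matches the paper's proof: surjectivity of $j_*$ via Corollary \ref{coroSurjectiveOrderedGroupMap} and the factorization of the inclusion $C(X)\to C^*(\bZ,X,h)$ through $A_Z$, and injectivity reduced (using surjectivity of $(i_1)_*\colon K_0(C(X))\to K_0(A_Z)$ together with $\ker((i_2)_*)=\image(\id-\alpha_*)$ from Proposition \ref{propKTheoryExactSequence}) to the single statement
\[
(i_1)_*\bigl(\image(\id-\alpha_*)\bigr)=0 \quad\text{in } K_0(A_Z).
\]
Be careful about the direction of this reduction: your phrasing ``any class dying in $K_0(C^*(\bZ,X,h))$ must lie in $\image(\id-\alpha_*)$'' is the automatic half (it follows from the exact sequence after lifting along the surjection $(i_1)_*$) and does not by itself give injectivity; what must be proved is that the coboundaries are killed already in $K_0(A_Z)$.

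Here your argument has a genuine gap. Lemma \ref{lemmaProjectionsZeroOnZn} applies only to projections vanishing on $Z^{(n)}$, so your tools prove $(i_1)_*(g-\alpha(g))=0$ only for $g\in C(X,\bZ)$ vanishing on some $Z^{(n)}$. For a general $g$ (say $g=\chi_U$ with $U\cap Z\neq\varnothing$), your plan is to split off the part supported on $Z^{(n)}$ and iterate over the building blocks; but this iteration never terminates: at every level the unhandled remainder is supported on $Z^{(n)}$, and since $\bigcap_{n}Z^{(n)}=Z\neq\varnothing$ it never vanishes at any finite stage. An element of $K_0(A_Z)=\varinjlim K_0(A^{(n)})$ is zero only if its image in a single $K_0(A^{(m)})$ is zero, so no such limiting argument can close the proof --- this is exactly the obstacle you flagged at the end, and it is not a technicality but the crux. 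The missing idea, and the place where the fiberwise structure enters, is the fiber map $\psi$ of Definition \ref{defnFiberwiseEssentiallyMinimal}: given $g$, set $f=g\circ\psi$. Then $f|_Z=g|_Z$, so $g-f$ is a locally constant function vanishing on $Z$, hence vanishing on $Z^{(n)}$ for some finite $n$, and Lemma \ref{lemmaProjectionsZeroOnZn} (applied to the projections occurring in $g-f$) gives $(i_1)_*(g-\alpha(g))=(i_1)_*(f-\alpha(f))$. But $\alpha(f)=g\circ\psi\circ h^{-1}=g\circ\psi=f$ identically, because $\psi\circ h=\psi$. Hence $(i_1)_*(g-\alpha(g))=0$ for every $g$, which is the step your proposal is missing and which completes the paper's proof.
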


\begin{proof}
Let $ i : A_Z \to C^* ( \bZ , X , h ) $ denote the inclusion map, and let $ i_* : K_0 ( A_Z ) \to K_0 ( C^* ( \bZ , X , h ) ) $ denote map induced by $ i $ on $ K_0 $. Let $ i_1 : C ( X ) \to A_Z $ denote the canonical inclusion, let $ i_2 : C ( X ) \to C^* ( \bZ , X , h ) $ denote the canonical inclusion, and let $ ( i_1 )_* $ and $ ( i_2 )_* $ denote the induced maps on $ K_0 $. We then clearly have $ i \circ i_1 = i_2 $. By Corollary \ref{coroSurjectiveOrderedGroupMap}, $ ( i_2 )_* : K_0 ( C ( X ) ) \to K_0 ( C^* ( \bZ , X , h ) ) $ is a surjective map between ordered groups, and therefore so is $ i^* $.

By Proposition \ref{propKTheoryExactSequence}, $ \ker ( ( i_2 )_* ) = \ran ( \id - \alpha_* ) $. Thus, since $ ( i_2 )_* = ( i_1 )_* \circ i_* $, we have $ ( i_1 )_* ( \ran ( \id - \alpha_* ) ) \subset \ker ( i_* ) $. Now suppose that $ a \in \ker ( i_* ) $. Because $ ( i_1 )_* $ is surjective, we can find $ g \in C ( X , \bZ ) $ such that $ ( i_1 )_* ( g ) = a $. Then $ ( i_2 )_* ( g ) = i_* ( a ) = 0 $, so $ g \in \ker ( ( i_2 )_* ) = \ran ( \id - \alpha_* ) $, so $ a \in ( i_1 )_* ( \ran ( \id - \alpha_* ) ) $. Altogether, we have 
\begin{equation}\label{eq: ran equals ker}
( i_1 )_* ( \ran ( \id - \alpha_* ) ) = \ker ( i_* ) . 
\end{equation}

We adopt the notation of the proof of Theorem \ref{thmAZisAF}. That is, we have a generating sequence of partitions $ ( \sP^{ ( n ) } ) $ and a sequence of systems of finite first return time maps $ ( \sS^{ ( n ) } ) $ and a sequence of subalgebras $ ( A^{ ( n ) } ) $ of $ C^* ( \bZ , X , h ) $ as in the proof. We now claim that $ ( i_1 )_* ( \ran ( \id - \alpha_* ) ) = 0 $. Suppose $ g_1 , g_2 \in C ( X , \bZ ) $ satisfy $ g_1|_Z = g_2|_Z $. Since $ ( \sP^{ ( n ) } ) $ is a generating sequence of partitions and for each $ n \in \bZ_{ > 0 } $, we have $ \sP_1 ( \sS^{ ( n ) } ) $ is finer than $ \sP^{ ( n ) } $, there is some $ n \in \bZ_{ > 0 } $ such that $ g_1 , g_2 \in A^{ ( n ) } $ and such that $ g_1|_{ Z^{ ( n ) } } = g_2|_{ Z^{ ( n ) } } $ (where $ Z^{ ( n ) } $ is defined in the proof of Theorem \ref{thmAZisAF}).  So $ g_1 - g_2 $ is $ 0 $ on $ Z^{ ( n ) } $, so we can write $ g_1 - g_2 $ as a linear combination of projections in $ C ( X ) \cap A^{ ( n ) } $ each of which is zero on $ Z^{ ( n ) } $. So by Lemma \ref{lemmaProjectionsZeroOnZn}, we have $ [ \alpha ( g_1 - g_2 ) ] = [ g_1 - g_2 ] $ in $ K_0 ( A^{ ( n ) } ) $, and so $ ( i_1 )_* ( g_1 - \alpha ( g_1 ) ) = ( i_1 )_* ( g_2 - \alpha ( g_2 ) ) $ in $ K_0 ( A_Z ) $. 

So let $ g \in C ( X , \bZ ) $. Define $ f \in C ( X , \bZ ) $ by $ f = g \circ \psi $. Then $ f|_Z = g|_Z $, and so by the above paragraph, we have $ ( i_1 )_* ( g - \alpha ( g ) ) = ( i_1 )_* ( f - \alpha ( f ) ) $.  But then notice that $ \alpha ( f ) = g \circ \psi \circ h^{ -1 } = g \circ \psi = f $, and so $ ( i_1 )_* ( f - \alpha ( f ) ) = 0 $. Thus, we have $ ( i_1 )_* ( \ran ( \id - \alpha_* ) ) = 0 $. Combining this with (\ref{eq: ran equals ker}), we see $ \ker ( i_* ) = 0 $. Altogether, we have shown that $ i_* $ is an isomorphism of ordered groups.
\end{proof}

\begin{thm}\label{thmK1Formula}
Let $ ( X , h , Z ) $ be a fiberwise essentially minimal zero-dimensional system. Then $ K_1 ( C^* ( \bZ , X , h ) ) \cong C ( Z , \bZ ) $.
\end{thm}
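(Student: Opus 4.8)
The plan is to reduce the computation to the Pimsner--Voiculescu sequence of Proposition \ref{propKTheoryExactSequence} and then identify the resulting kernel with $C(Z,\bZ)$ using the fiberwise essentially minimal structure. Since $K_1(C(X)) = 0$, the connecting map $\delta$ in that sequence is injective, and its image is exactly the kernel of $\id - \alpha_*$; thus $\delta$ restricts to an isomorphism
\[
K_1(C^*(\bZ,X,h)) \cong \ker(\id - \alpha_*) \subseteq K_0(C(X)) \cong C(X,\bZ).
\]
Under the identification $K_0(C(X)) \cong C(X,\bZ)$, the map $\alpha_*$ is given by $f \mapsto f \circ h^{-1}$, so $\ker(\id - \alpha_*)$ is precisely the group $C(X,\bZ)^h$ of $h$-invariant continuous integer-valued functions on $X$. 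Hence it suffices to prove that $C(X,\bZ)^h \cong C(Z,\bZ)$.

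Next I would set up the candidate isomorphism using the quotient map $\psi$ of Definition \ref{defnFiberwiseEssentiallyMinimal}. Pullback along $\psi$ gives a homomorphism $C(Z,\bZ) \to C(X,\bZ)$, $g \mapsto g \circ \psi$, whose image lies in $C(X,\bZ)^h$ because $\psi \circ h = \psi$, and which is injective since $\psi$ is surjective. In the reverse direction, restriction $f \mapsto f|_Z$ sends $C(X,\bZ)^h$ into $C(Z,\bZ)$. Using $\psi|_Z = \id$ one checks immediately that $(g \circ \psi)|_Z = g$, so these two maps are mutually inverse provided that every $h$-invariant $f$ satisfies $f = (f|_Z) \circ \psi$, i.e.\ that $f$ is constant on each fiber $\psi^{-1}(z)$. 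The whole statement therefore comes down to this factorization claim.

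The hard part, and the place where fiberwise essential minimality is genuinely used, is exactly this last claim. Fix $z \in Z$ and set $Y = \psi^{-1}(z)$; this is compact and $h$-invariant (since $\psi \circ h = \psi$ and $h$ is a homeomorphism), and by hypothesis $(Y, h|_Y)$ is essentially minimal with $z$ lying in its minimal set $M_z$. The restriction $f|_Y$ is continuous, integer-valued, and $h$-invariant, so each level set $\{y \in Y : f(y) = n\}$ is clopen and $h$-invariant in $Y$. Intersecting with $M_z$ and invoking minimality shows $f$ is constant on $M_z$, say equal to $c = f(z)$. To upgrade this to all of $Y$, consider the clopen $h$-invariant set $W = \{y \in Y : f(y) = c\}$, which contains $M_z$. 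If some $y \in Y$ had $y \notin W$, then $h$-invariance and closedness of $Y \setminus W$ would force $\overline{\orb_h(y)} \subseteq Y \setminus W$; but $\overline{\orb_h(y)}$ is a nonempty closed $h$-invariant set and hence, by essential minimality, contains the unique minimal set $M_z \subseteq W$, a contradiction. Thus $f \equiv c$ on $Y$, giving $f = (f|_Z) \circ \psi$. Combining this with the previous paragraph yields $C(X,\bZ)^h \cong C(Z,\bZ)$, and therefore $K_1(C^*(\bZ,X,h)) \cong C(Z,\bZ)$.
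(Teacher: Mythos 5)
Your proof is correct and follows essentially the same route as the paper: both reduce via the Pimsner--Voiculescu sequence of Proposition \ref{propKTheoryExactSequence} to identifying $\ker(\id-\alpha_*)$ with the functions that are constant on each fiber $\psi^{-1}(z)$, which form a copy of $C(Z,\bZ)$. The only divergence is in how fiber-constancy of an invariant $f$ is justified: the paper invokes the recurrence characterization of essential minimality (Theorem 1.1 of \cite{HermanPutnamSkau92}) to move a point $x$ with $f(x)\neq f(z)$ into a clopen set where $f$ equals $f(z)$, whereas you use the equivalent fact that every nonempty closed $h$-invariant subset of the fiber---in particular any orbit closure---contains the unique minimal set; both arguments are valid.
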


\begin{proof}
Adopt the notation of Proposition \ref{propKTheoryExactSequence}. Then $ K_1 ( C^* ( \bZ , X , h ) ) \cong \ker ( \id - \alpha_* ) $. Identifying $ K_0 ( C ( X ) ) $ with $ C ( X , \bZ ) $, we may replace $ \alpha_* $ with $ \alpha $.

Let $ f \in \ker ( \id - \alpha ) $ and let $ z \in Z $. Suppose $ f|_{ \psi^{ -1 } ( z ) } $ is not constant. Then there is some $ x \in \psi^{ -1 } ( z ) $ such that $ f ( z ) \neq f ( x ) $. Let $ U $ be a compact open subset of $ \psi^{ -1 } ( z ) $ such that $ f ( U ) = f ( z ) $. Since $ ( \psi^{ -1 } ( z ) , h|_{ \psi^{ -1 } ( z ) } ) $ is an essentially minimal zero-dimensional system, by Theorem 1.1 of \cite{HermanPutnamSkau92}, there is an $ n \in \bZ_{ > 0 } $ such that $ x \in h^{ -n } ( U ) $. Let $ x' = h^{ n } ( x ) \in U $. Then $ f ( x' ) \neq f ( x ) = f ( h^{ -n } ( x' ) ) = \alpha^n ( f ) ( x' ) $, and so $ f \neq \alpha^n ( f ) $, and so $ f \neq \alpha ( f ) $, a contradiction to $ x \in \ker ( \id - \alpha ) $. Therefore, $ f|_{ \psi^{ -1 } ( z ) } $ is constant.

Now suppose $ f \in C ( X , \bZ ) $ and suppose $ f|_{ \psi^{ -1 } ( z ) } $ is constant for each $ z \in Z $. Then for each $ x \in X $, we have $ \alpha ( f ) ( x ) = f ( h^{ -1 } ( x ) ) = f ( \psi ( x ) ) = f ( x ) $, and so $ f = \alpha ( f ) $, and so $ f \in \ker ( \id - \alpha ) $. 

Thus, we have 
\[
\ker ( \id - \alpha ) = \{ f \in C ( X , \bZ ) \setdiv \mbox{$f|_{ \psi^{ -1 } ( z ) } $ is constant for each $ z \in Z $} \} \cong C ( Z , \bZ ) 
\]
as desired.
\end{proof}

\begin{coro}
Let $ ( X_1 , h_1 , Z_1 ) $ and $ ( X_2 , h_2 , Z_2 ) $ be fiberwise essentially minimal zero-dimensional systems such that $ C^*( \bZ , X_1 , h_1 ) \cong C^* ( \bZ , X_2 , h_2 ) $. Then $ Z_1 \cong Z_2 $.
\end{coro}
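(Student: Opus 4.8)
The plan is to read off the conclusion from the $K$-theoretic invariant computed in Theorem~\ref{thmK1Formula}. An isomorphism $C^*(\bZ, X_1, h_1) \cong C^*(\bZ, X_2, h_2)$ induces an isomorphism of the $K_1$-groups, and by Theorem~\ref{thmK1Formula} these are $C(Z_1, \bZ)$ and $C(Z_2, \bZ)$. So the problem reduces to the purely topological statement: if $Z_1, Z_2$ are compact metrizable totally disconnected spaces (as closed subsets of zero-dimensional systems must be) and $C(Z_1, \bZ) \cong C(Z_2, \bZ)$, then $Z_1 \cong Z_2$.

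To carry out the reduction step I would recover each $Z_i$ from $C(Z_i, \bZ)$ by Stone duality. Since $Z_i$ is compact Hausdorff and totally disconnected, it is the Stone space of its Boolean algebra of clopen subsets, so it suffices to recover that Boolean algebra. The clopen $U \subseteq Z_i$ correspond exactly to the elements $p \in C(Z_i, \bZ)$ with $0 \le p \le 1$ (equivalently, to the idempotents of the commutative ring $C(Z_i, \bZ)$), because an integer-valued function between $0$ and $1$ takes values in $\{0,1\}$. The Boolean operations $U \cap V$, $U \cup V$, $Z_i \setminus U$ become $\min(p,q)$, $\max(p,q)$, $1-p$, all determined by the ordered-group-with-unit structure. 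Thus an isomorphism of ordered groups with order unit $(C(Z_1,\bZ), \le, 1) \cong (C(Z_2,\bZ), \le, 1)$ yields an isomorphism of Boolean algebras and hence a homeomorphism $Z_1 \cong Z_2$.

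The main obstacle is exactly this structural point: a $C^*$-algebra isomorphism gives only an abstract group isomorphism on $K_1$, and it is not clear that it respects the order (equivalently the ring structure) that the recovery above requires. This genuinely matters, since the bare abelian group $C(Z, \bZ)$ does not determine $Z$: for every infinite $Z$ of the relevant type it is free abelian of countably infinite rank, so the Cantor set and a convergent sequence give isomorphic groups but non-homeomorphic spaces. I would therefore concentrate on showing that the order on $K_1 \cong \ker(\id - \alpha_*) \subseteq C(X,\bZ)$ coming from Proposition~\ref{propKTheoryExactSequence} is preserved by any isomorphism of the crossed products, or else on extracting the needed order/ring data from the $K_0$-side (which is preserved as an ordered group) together with its interaction with $K_1$. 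Securing the preservation of this extra structure is where the real content lies; the final topological recovery via Stone duality is then routine.
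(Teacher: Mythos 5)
Your reduction is exactly the paper's intended route: the corollary is stated immediately after Theorem~\ref{thmK1Formula} with no separate proof, so the implicit argument is precisely ``a $*$-isomorphism of the crossed products induces a group isomorphism $K_1 \cong K_1$, i.e.\ $C(Z_1,\bZ)\cong C(Z_2,\bZ)$, hence $Z_1\cong Z_2$.'' Your Stone-duality step is also correct as far as it goes: the idempotents of $C(Z,\bZ)$ are the elements $p$ with $0\le p\le 1$, pointwise $\min$ and $\max$ are the lattice operations determined by the order, so an isomorphism of ordered groups with order unit does yield a homeomorphism $Z_1\cong Z_2$. But your proposal is not a proof, because the step you yourself flag and then defer --- that the isomorphism induced on $K_1$ respects this order (equivalently the ring) structure --- is the entire content of the statement, and nothing in the paper supplies it. $K_1$ is functorial only as an abstract abelian group; Theorem~\ref{thmK1Formula} computes it only as a group; and the paper never defines an order on $K_1$, let alone one preserved by arbitrary $*$-isomorphisms and carried to the pointwise order under the isomorphism of Theorem~\ref{thmK1Formula}. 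So the final paragraph of your proposal is a plan for the missing argument, not the argument.

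The gap is genuine and cannot be talked away, for the reason you give: by N\"obeling's theorem $C(Z,\bZ)$ is free abelian, and it has countably infinite rank for \emph{every} infinite compact metrizable zero-dimensional $Z$; moreover both the Cantor set and a convergent sequence do occur as $Z$ for fiberwise essentially minimal systems (e.g.\ take $X = Z\times K$ with $K$ a Cantor set, $h = \id_Z \times$ a minimal homeomorphism of $K$, and $\psi(z,k)=(z,k_0)$), so a bare group isomorphism on $K_1$ cannot distinguish non-homeomorphic $Z$'s. It is worth saying plainly that this is therefore a gap in the paper itself: the unproved corollary rests on exactly the inference you showed is insufficient. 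Closing it requires an ingredient beyond Theorem~\ref{thmK1Formula}. For instance, if one additionally assumes no periodic points, then $C(X)$ is maximal abelian in $C^*(\bZ,X,h)$, the center is the algebra of $h$-invariant functions, which by the same fiberwise essential minimality argument as in the proof of Theorem~\ref{thmK1Formula} equals $\{g\circ\psi : g\in C(Z)\}\cong C(Z)$; centers are preserved by $*$-isomorphisms, so Gelfand duality gives $Z_1\cong Z_2$. Without that assumption one would have to argue, say, that the graded ordered group $K_0\oplus K_1$ (which \emph{is} preserved by $*$-isomorphisms) determines the pointwise order on $C(Z,\bZ)$; that is plausible but is real work that neither you nor the paper carries out.
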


\section{Bratelli Diagrams}

In this section, we explore the construction of ordered Bratelli diagrams associated to fiberwise essentially minimal zero-dimensional systems. This correspondence is used to prove Theorem \ref{thmMainTheoremPrelude}. For work done in the minimal case, see \cite{BezuglyiKwiatkowskiYassawi14} and \cite{GiordanoPutnamSkau95}. For work done in the essentially minimal case, see  \cite{AminiElliottGolestani21} and \cite{HermanPutnamSkau92}.

\begin{defn}
A \emph{Bratteli diagram} $ B $ is a pair of sets $ ( V , E ) $ such that:
\begin{enumerate}[(a)]
\item The set $ V $ is called \emph{the set of vertices of $ B $}. We can write $ V = \bigsqcup_{ n = 0 }^\infty V_n $ where $ V_0 $ contains a single point $ v_0 $ and $ V_n $ is finite and nonempty for all $ n \in \bZ_{ \geq 0 } $. For each $ n \in \bZ_{ \geq 0 } $, we call $ V_n $ \emph{the set of vertices of $ B $ at level $ n $}.
\item The set $ E $ is called \emph{the set of edges of $ B $}. We can write $ E = \bigsqcup_{ n = 1 }^\infty E_n $ where $ E_n $ is finite and nonempty for all $ n \in \bZ_{ > 0 } $. For each $ n \in \bZ_{ > 0 } $, we call $ E_n $ \emph{the set of edges at level $ n $}.
\item There are maps $ r , s : E \to V $ such that for $ n \in \bZ_{ > 0 } $ and $ e \in E_n $, $ s ( e ) \in V_{ n - 1 } $ and $ r ( e ) \in V_n $. Moreover, $ r^{ -1 } ( v ) $ is nonempty for all $ v \in V $ and $ s^{ -1 } ( v ) $ is nonempty for all $ v \in V \setminus V_0 $. The map $ r $ is called the \emph{range map of $ B $} and the map $ s $ is called \emph{the source map of $ B $}.
\end{enumerate}
\end{defn}

\begin{nota}
Let $ B = ( V , E ) $ be a Bratteli diagram. For each $ v \in V $, we denote $ R ( v ) = r ( s^{ -1 } ( v ) ) $, and for each $ v \in V \setminus V_0 $, we denote $ S ( v ) = s ( r^{ -1 } ( v ) ) $. If $ v \in V_n $, then $ R ( v ) $ is the set of all vertices in $ V_{ n + 1 } $ that are connected to $ v $ by an edge, and $ S ( v ) $ is the set of all vertices in $ V_{ n - 1 } $ connected to $ v $ by an edge. In a reasonable sense, this gives us range and source maps for vertices.
\end{nota}

\begin{defn}
An \emph{ordered Bratteli diagram} $ B $ is a Bratteli diagram $ ( V , E ) $ together with a partial order $ \leq $ on $ E $ such that $ e , e' \in E $ are comparable if and only if $ r ( e ) = r ( e' ) $. We write $ B = ( V , E , \leq ) $.
\end{defn}

Let $ B = ( V , E , \leq ) $ be an ordered Bratteli diagram. We define $ E_{ \min } $ ($ E_{ \max } $) to be the set of all edges that are minimal (maximal, resp.) with respect to $ \leq $. We define $ V_{ \min } $ ($ V_{ \max } $) to be the set of all $ v \in V $ such that there is an $ e $ in $ E_{ \min } $ ($ E_{ \max } $, resp.) with $ s ( e ) = v $.

\begin{defn}
Let $ B = ( V , E , \leq ) $ be an ordered Bratteli diagram. We define a \emph{partial Vershik transformation} $ \widetilde{ h }_B : ( X_B \setminus X_{ B ,\max } ) \cup X_{ B ,\min } \to ( X_B \setminus X_{ B , \min } ) \cup X_{ B , \max } $ in the following way. If $ x \in X_{ B , \max } \cap X_{ B , \min } $, we define $ \widetilde{ h }_B ( x ) = x $. If $ x = ( x_1 , x_2 , \ldots ) \in X_B \setminus X_{ B , \max } $, then there is some smallest $ k \in \bZ_{ > 0 } $ such that $ x_k \notin E_{ \max } $. Let $ y_k $ denote the successor of $ x_k $ in $ E $ and let $ ( y_1 , y_2 , \ldots , y_{ k - 1 } ) $ be the unique path from $ v_0 $ to $ s ( y_k ) $ such that $ y_j \in E_{ \min } $ for all $ j \in \{ 1 , \ldots , k - 1 \} $. We define $ \widetilde{ h }_B ( p ) = ( y_1 , y_2 , \ldots , y_k , x_{ k + 1 } , x_{ k + 2 } , \ldots ) $.
\end{defn}

Let $ B = ( V , E , \leq ) $ be an ordered Bratteli diagram. For each $ k , k' \in \bZ_{ > 0 } $ with $ k < k' $, we define $ P_{ k , k' } $ to be the set of all paths from $ V_k $ to $ V_{ k' } $. Formally, $ P_{ k , k' } $ is the set of $ ( e_{ k + 1 } , \ldots , e_{ k' } ) $ such that for all $ j \in \{ k + 1 , \ldots , k' \} $, $ e_j \in E_j $ and for all $ j \in \{ k + 1 , \ldots , k' - 1 \} $, we have $ r ( e_j ) = s ( e_{ j + 1 } ) $.

\begin{defn}
Let $ B = ( V , E , \leq ) $ be an ordered Bratteli diagram. We define the \emph{infinite path space} $ X_B $ to be the set of all sequences $ x = ( x_1 , x_2 , \ldots ) $ where $ x_n \in E_n $ and $ r ( x_n ) = s ( x_{ n + 1 } ) $ for all $ n \in \bZ_{ > 0 } $ together with the topology generated by sets of the form $ U ( e_1 , \ldots , e_k ) $, which is the set of all $ x = ( x_1 , x_2 , \ldots ) $ with $ x_j = e_j $ for all $ j \in \{ 1 , \ldots , k \} $.
\end{defn}

Let $ B = ( V , E , \leq ) $ be an ordered Bratteli diagram. It is easy to see that the infinite path space is a zero-dimensional space. We define $ X_{ B , \min } $ ($ X_{ B , \max }$) to be the set of all $ x = ( x_1 , x_2 , \ldots ) \in X_B $ such that $ x_j $ is in $ E_{ \min } $ ($ E_{ \max } $, resp.) for all $ j \in \bZ_{ > 0 } $.

The following terminology appears in Definition 2.18 of \cite{BezuglyiKwiatkowskiYassawi13}, although we restate it to give more clarity as to when the definition applies.

\begin{defn}
Let $ B = ( V , E , \leq ) $ be an ordered Bratteli diagram and let $ \widetilde{ h }_B $ be its partial Vershik transformation. We say that the ordering on $ B $ is \emph{perfect} if for every $ e \in X_{ B , \min } $, $ \overline{ \orb_{ \widetilde{ h }_B ( e ) } } \cap X_{ B , \max } $ contains a single element, and if for every $ e \in X_{ B , \max } $, $ \overline{ \orb_{ \widetilde{ h }_B ( e ) } } \cap X_{ B , \min } $ contains a single element. In this case, we define the \emph{Vershik transformation} of $ X_B $, denoted by $ h_B $, to be the extension of $ \widetilde{ h }_B $ which, for each $ e \in X_{ B , \min } $, sends the unique element of $ \overline{ \orb_{ \widetilde{ h }_B ( e ) } } \cap X_{ B , \max } $ to $ e $.
\end{defn}

Thus, given an ordered Bratteli diagram $ B = ( V , E , \leq ) $ with a perfect ordering, the system $ ( X_B , h_B ) $ is a zero-dimensional system.


\begin{lemma}\label{lemmaSystemSetUpForDiagrams}
Let $ ( X , h ) $ be a fiberwise essentially minimal zero-dimensional system, let $ \sP $ and $ \sP' $ be partitions of $ X $, and let $ \systembasic $ be a system of finite first return time maps subordinate to $ \sP $ such that for each $ t \in \{ 1 , \ldots , T \} $, we have $ \psi ( X_t ) \subset X_t $. Then there is a system $ \systemarg{ 0 } $ of finite first return time maps subordinate to $ \sP $ and a system $ \systemarg{ \prime } $ of finite first return time maps subordinate to $ \sP' $ such that:
\begin{enumerate}[(a)]
\item The partition $ \sP_1 ( \sS' ) $ is finer than $ \sP' $ and $ \sP_1 ( \sS^0 ) $.
\item The partition $ \sP_1 ( \sS^0 ) $ is finer than $ \sP_1 ( \sS ) $.
\item We have $ T^0 = T $ and for each $ t \in \{ 1 , \ldots , T \} $, we have $ X_t^0 = X_t $.
\item For each $ t' \in \{ 1 , \ldots , T' \} $, there is a $ t \in \{ 1 , \ldots , T^0 \} $ and $ k \in \{ 1 , \ldots , K_t^0 \} $ such that $ X_{ t' }' \subset Y_{ t , k }^0 $.
\item For each $ t' \in \{ 1 , \ldots , T' \} $, there is a $ t \in \{ 1 , \ldots , T^0 \} $ and $ k \in \{ 1 , \ldots , K_t^0 \} $ such that $  X_{ t' }' \subset h^{ J_{ t , k }^0 } ( Y_{ t , k }^0 ) $.
\item For each $ t \in \{ 1 , \ldots , T^0 \} $ and each $ k \in \{ 1 , \ldots , K^0 \} $, there is a $ t' \in \{ 1 , \ldots , T' \} $ such that $ Y_{ t , k }^0 \subset \bigcup_{ j \in \bZ } h^j ( X_{ t' }' ) $.
\end{enumerate}
\end{lemma}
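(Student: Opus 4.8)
The plan is to build $ \sS' $ first and then obtain $ \sS^0 $ from it by intersecting the base pieces of $ \sS $ with the towers of $ \sS' $. The one geometric input I would record at the outset is that the hypothesis $ \psi ( X_t ) \subset X_t $ forces $ Z \subset \bigsqcup_{ t = 1 }^T X_t $: given $ z \in Z $, since $ \sP_1 ( \sS ) $ partitions $ X $ we may write $ z = h^j ( y ) $ with $ y \in Y_{ t , k } \subset X_t $, and then $ \psi \circ h = \psi $ together with $ \psi|_Z = \id $ give $ z = \psi ( z ) = \psi ( y ) \in \psi ( X_t ) \subset X_t $. This is what allows the later bases, which must meet every fiber's minimal set, to be taken inside $ \bigsqcup_t X_t $.

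I would then construct $ \sS' $ using Theorem \ref{thmFiberwiseIffAdmitsPartitions} together with the refinement results of \cite{Herstedt21} (Proposition 1.14 to push $ \sP_1 $ past a prescribed partition, Lemma 4.12 to position the bases): take $ \sS' $ subordinate to $ \sP' $ with $ \sP_1 ( \sS' ) $ finer than both $ \sP' $ and $ \sP_1 ( \sS ) $, with $ \bigsqcup_{ t' } X_{ t' }' \subset \bigsqcup_t X_t $ and $ \bigsqcup_{ t' } X_{ t' }' \supset Z $. The key structural feature is that, because each $ X_{ t' }' $ lies in $ \bigsqcup_t X_t $ and $ \sS' $ uses first return to $ X_{ t' }' $, every $ \sS' $-tower is a concatenation of complete $ \sS $-towers, hence is at least as tall as each $ \sS $-tower it contains. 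I would also choose the bases inside single atoms of the common refinement of the departure partition $ \{ Y_{ t , k } \} $ and the arrival partition $ \{ h^{ J_{ t , k } } ( Y_{ t , k } ) \} $ of $ \sS $, so that each $ X_{ t' }' $ sits in a single $ \sS $-base piece and in a single $ \sS $-arrival piece.

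Writing $ W_{ t' } $ for the $ \sS' $-tower over $ X_{ t' }' $ (so the $ W_{ t' } $ partition $ X $), I define $ \sS^0 $ by $ T^0 = T $, $ X_t^0 = X_t $, and base pieces the nonempty sets $ Y_{ t , k } \cap W_{ t' } $, each carrying the unchanged return time $ J_{ t , k } $. Condition (c) is then built in, and (b) and (f) are immediate: the pieces refine those of $ \sS $ with the same return times, so $ \sP_1 ( \sS^0 ) $ is finer than $ \sP_1 ( \sS ) $, and each piece lies in one $ W_{ t' } $, hence inside $ \bigcup_{ j \in \bZ } h^j ( X_{ t' }' ) $. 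Condition (a) follows from the concatenation structure: since $ Y_{ t , k } $ and $ W_{ t' } $ are each unions of $ \sS' $-floors, so is $ Y_{ t , k } \cap W_{ t' } $, and for $ 0 \leq j \leq J_{ t , k } - 1 $ the floor $ h^j ( Y_{ t , k } \cap W_{ t' } ) $ never overshoots the top of $ W_{ t' } $ (the full $ \sS $-subtower of height $ J_{ t , k } $ lies inside $ W_{ t' } $), so it remains a union of $ \sS' $-floors; thus $ \sP_1 ( \sS' ) $ refines $ \sP_1 ( \sS^0 ) $, and it refines $ \sP' $ by construction. Condition (d) is also immediate, since $ X_{ t' }' $ lies in a single $ Y_{ t , k } $ and in its own tower $ W_{ t' } $, hence in the single piece $ Y_{ t , k } \cap W_{ t' } $.

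The main obstacle is condition (e), that each $ X_{ t' }' $ lie in a single $ \sS^0 $-arrival piece. Tracing the definitions, the $ \sS^0 $-arrival piece containing a point $ x \in X_{ t' }' $ is governed by the $ \sS $-tower immediately preceding $ x $, that is by $ h^{ -J } ( X_{ t' }' ) $ (with $ J $ the $ \sS $-return time, constant on $ X_{ t' }' $ because $ X_{ t' }' $ sits in one $ \sS $-arrival piece) and by which tower $ W_{ s } $ this predecessor set meets; so (e) holds exactly when $ h^{ -J } ( X_{ t' }' ) $ lies in a single $ W_{ s } $, i.e. when each base of $ \sS' $ is fed from the top of a single $ \sS' $-tower. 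This compatibility between the departure (tower) and arrival structures of $ \sS' $ is the delicate point and is where the real work lies, since naively subdividing a base to enforce it changes the first-return times and hence the towers themselves. I would resolve it by refining the choice of the $ \sS' $-bases so that each predecessor set is tower-pure — choosing them inside atoms of a common refinement that also separates the preimages $ h^{ -J } ( X_{ t' }' ) $ according to the $ W_{ s } $ — and then verifying, using the flexibility of Proposition 1.14 and Lemma 4.12 of \cite{Herstedt21}, that this extra subdivision preserves subordination to $ \sP' $ and the concatenation structure on which (a), (d) and (f) rely.
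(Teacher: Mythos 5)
Your construction of $\sS^0$ is, in substance, the paper's own: the paper defines the new base pieces as the nonempty sets $Y_{t,a(n,c)} \cap \bigcup_{j \in \bZ} h^j ( X'_{s(n)} )$ with unchanged return times, and since each $\sS'$-tower $W_{t'}$ is exactly the orbit saturation $\bigcup_{j \in \bZ} h^j ( X'_{t'} )$, your pieces $Y_{t,k} \cap W_{t'}$ are the same sets. Your verifications of (a), (b), (c), (d), (f) are essentially correct. The genuine problem is your treatment of (e): you declare it the delicate point, reduce it (correctly) to the statement that $h^{-J} ( X'_{t'} )$ lies in a single tower $W_s$, and then propose to secure this by further subdividing the $\sS'$-bases so that the predecessor sets become ``tower-pure''. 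That last step is not a proof, and as you yourself observe it is circular: subdividing a base changes the first-return times and hence the towers, so ``separating the preimages according to the $W_s$'' uses the $W_s$ that are defined by the very bases being subdivided. As written, conclusion (e) is left unproven.

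The gap is spurious, and the missing observation is already forced by your setup. Condition (e) of Definition \ref{defnSystemOfFiniteReturnTimeMaps}, applied to $\sS'$, says $\bigsqcup_{k'} h^{J'_{t',k'}} ( Y'_{t',k'} ) = X'_{t'}$: the tops of the tower over $X'_{t'}$ return onto $X'_{t'}$ itself, not into other bases. Hence $h ( W_{t'} ) = W_{t'}$, i.e.\ each $W_{t'}$ is $h$-invariant (equivalently $W_{t'} = \bigcup_{j \in \bZ} h^j ( X'_{t'} )$, which is how the paper writes these sets, making the invariance manifest). Consequently $h^{-J} ( X'_{t'} ) \subset h^{-J} ( W_{t'} ) = W_{t'}$, so your criterion for (e) is satisfied automatically with $s = t'$: since $X'_{t'}$ lies in a single arrival atom $h^{J_{t,l}} ( Y_{t,l} )$ of $\sS$, we get $X'_{t'} \subset h^{J_{t,l}} ( Y_{t,l} ) \cap W_{t'} = h^{J_{t,l}} ( Y_{t,l} \cap W_{t'} )$, a single arrival piece of $\sS^0$. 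This is exactly how the paper disposes of (e), and the same identity $h^j ( Y_{t,k} \cap W_{t'} ) = h^j ( Y_{t,k} ) \cap W_{t'}$ also streamlines your verifications of (a) and (f). So: right construction, but the proposed resolution of (e) is circular and unnecessary; with the invariance observation inserted, your argument becomes complete and coincides with the paper's.
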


\begin{proof}
We first construct $ \sS' $ and then use it to modify $ \sS $ to obtain $ \sS^0 $. By applying Lemma 4.13 of \cite{Herstedt21}, we may assume that $ \sS $ satisfies the conclusions of the lemma; in particular, for all $ t \in \{ 1 , \ldots , T \} $, we have $ \psi ( X_t ) \subset Y_{ t , 1 } $, and the partitions $ \sP_1 ( \sS ) $ and $ \sP_2 ( \sS ) $ are finer than $ \sP $.  Let $ \sP\dprime $ be a partition finer than $ \sP' $, $ \sP_1 ( \sS ) $, and $ \sP_2 ( \sS ) $. Then apply Lemma 3.2 of \cite{Herstedt21} to obtain a system $ \systemarg{ \prime } $ of finite first return time maps subordinate to $ \sP\dprime $ such that for all $ t' \in \{ 1 , \ldots , T' \} $, there is a $ t \in \{ 1 , \ldots , T \} $ such that $ X_{ t' }' \subset X_t $. Since $ \sP\dprime $ is finer than $ \sP' $, $ \sS' $ is also subordinate to $ \sP' $. By applying Proposition 1.9 of \cite{Herstedt21}, we may assume that $ \sP_1 ( \sS' ) $ is finer than $ \sP\dprime $.

Let $ t' \in \{ 1 , \ldots , T' \} $ and let $ t \in \{ 1 , \ldots , T \} $ be such that $ X_{ t' }' \subset X_t $. Since $ \sP\dprime $ is finer than $ \sP_1 ( \sS ) $, there is some $ k \in \{ 1 , \ldots , K_t \} $ such that 
\begin{equation}\label{eq: X_t' subset Ytk}
X_{ t' }' \subset Y_{ t , k } .
\end{equation}
Since $ \sP\dprime $ is finer than $ \sP_2 ( \sS ) $, there is some $ l \in \{ 1 , \ldots , K_t \} $ such that 
\begin{equation}\label{eq: X_t' subset hJtkYtk}
X_{ t' }' \subset h^{ J_{ t , l } } ( Y_{ t , l } ) .
\end{equation}

Define $ T^0 = T $ and for each $ t \in \{ 1 , \ldots , T \} $, define $ X_t^0 = X_t $ (note that after finishing this construction this verifies conclusion (c) of the lemma). Let $ t \in \{ 1 , \ldots , T \} $ and let $ \{ s ( 1 ) , \ldots , s ( N_t ) \} $ be the set of all $ t' \in \{ 1 , \ldots , T' \} $ such that $ X_{ t' } \subset X_t $. For each $ n \in \{ 1 , \ldots , N_t \} $, let $ \{ a ( n , 1 ) , \ldots , a ( n , C_n ) \} $ be the set of all $ k \in \{ 1 , \ldots , K_t \} $ such that $ Y_{ t , k } \cap \bigcup_{ j \in \bZ } h^j ( X_{ s ( n ) }' ) \neq \varnothing $. Define $ K_t^0 = \sum_{ n = 1 }^{ N_t } C_n $ and define $ C_0 = 0 $. Let $ k \in \{ 1 , \ldots , K_t^0 \} $ and let $ n \in \{ 1 , \ldots , N_t \} $ and $ c \in \{ 1 , \ldots , C_n \} $ be such that $ k = C_{ n - 1 } + c $. Then define 
\begin{equation}\label{eq : Ytk0 defn}
Y_{ t , k }^0 = Y_{ t , a ( n , c ) } \cap \bigcup_{ j \in \bZ } h^j ( X_{ s ( n ) }' ) 
\end{equation}
and define $ J_{ t , k }^0 = J_{ t , a ( n , c ) } $. It is routine to verify that $ \systemarg{ 0 } $ is a system of finite first return time maps subordinate to $ \sP $. By applying Proposition 1.9 of \cite{Herstedt21}, we may assume that $ \sP_1 ( \sS^0 ) $ is finer than $ \sP $ (note that this proves conclusion (b) of the lemma).

We now verify the conclusions of the lemma. Conclusions (a), (b), and (c) have already been verified. Conclusion (d) follows from (\ref{eq: X_t' subset Ytk}) and from the fact that $ \sP_1 ( \sS^0 ) $ is finer than $ \sP_1 ( \sS ) $. Conclusion (e) follows from (\ref{eq: X_t' subset Ytk}) and the fact that that $ \sP_2 ( \sS^0 ) $ is finer than $ \sP_2 ( \sS ) $. Conclusion (f) is shown by (\ref{eq : Ytk0 defn}). This proves the lemma.
\end{proof}

\begin{prop}\label{propAssociatedBratteliDiagram}
Let $ ( X , h , Z ) $ be a fiberwise essentially minimal zero-dimensional system. There is an ordered Bratteli diagram $ B = ( V , E , \leq ) $ with a perfect ordering such that 
\begin{enumerate}[(a)]
\item The system $ ( X_B , h_B , X_{ B , \min } ) $ is conjugate to $ ( X , h , Z ) $.
\item For each $ v $ in $ V_{ \min } $ (or $ V_{ \max } $) there is a $ v' $ in $ V_{ \min } $ ($ V_{ \max } $, resp.) and an edge $ e $ in $ E_{ \min } $ ($ E_{ \max } $, resp.) such that $ s ( e ) = v $ and $ r ( e ) = v' $.
\item For each $ v $ in $ V_{ \min } $ (or $ V_{ \max } $) and each $ e $ in $ E_{ \min } $ ($ E_{ \max } $, resp.) with $ r ( e ) \in R ( v ) $ satisfies $ s ( e ) = v $.
\item For each $ v $ in $ V_{ \min } $ (or $ V_{ \max } $) and each $ m \in \bZ_{ > 0 } $, $ R^m ( v )  = ( R^m \circ S^m \circ R^m ) ( v ) $.
\end{enumerate}
\end{prop}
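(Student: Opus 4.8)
The plan is to build the diagram from a refining sequence of systems of finite first return time maps and then recognize its Vershik system as $(X,h)$, in the spirit of the minimal Cantor constructions of \cite{GiordanoPutnamSkau95} and the essentially minimal construction of \cite{HermanPutnamSkau92}. First I would fix a generating sequence of partitions $(\sP^{(n)})$ and, iterating Lemma \ref{lemmaSystemSetUpForDiagrams}, produce a sequence $(\sS^{(n)})$ of systems of finite first return time maps with the following features: the partitions $\sP_1(\sS^{(n)})$ generate the topology and $\sP_1(\sS^{(n+1)})$ refines $\sP_1(\sS^{(n)})$; each level-$(n+1)$ tower $X^{(n+1)}_{t'}$ lies in a single level-$n$ base floor $Y^{(n)}_{t,k}$ and in a single level-$n$ top floor $h^{J^{(n)}_{t,k}}(Y^{(n)}_{t,k})$ (conclusions (d), (e) of the lemma); each level-$n$ base $Y^{(n)}_{t,k}$ is contained in the orbit of a single level-$(n+1)$ tower (conclusion (f)); and, arranging $\psi(X^{(n)}_t)\subset Y^{(n)}_{t,1}$ and $Z\subset\bigsqcup_t X^{(n)}_t$ as in the proof of Theorem \ref{thmAZisAF}, the set $Z$ sits inside the base floors $Y^{(n)}_{t,1}$ at every level. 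The only real care in this step is feeding each new system back into Lemma \ref{lemmaSystemSetUpForDiagrams} while preserving the hypothesis $\psi(X_t)\subset X_t$; this forces me to interleave the ``$\sS^0$'' refinement of the old system with the ``$\sS'$'' construction of the new one.

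Next I would define $B=(V,E,\le)$. Put $V_0=\{v_0\}$ and let $V_n$ be the set of towers of $\sS^{(n)}$, i.e.\ the pairs $(t,k)$ with $t\in\{1,\dots,T^{(n)}\}$ and $k\in\{1,\dots,K^{(n)}_t\}$. For a level-$n$ tower $(t,k)$ of height $J^{(n)}_{t,k}$, each floor $h^j(Y^{(n)}_{t,k})$, $j=0,\dots,J^{(n)}_{t,k}-1$, lies in a unique floor of a unique level-$(n-1)$ tower $w$ by refinement; I declare one edge from $w$ to $(t,k)$ for each such $j$, and order the edges into $(t,k)$ by the floor index $j$. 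Thus the minimal edge into a tower is its base floor and the maximal edge is its top floor. A point $x\in X$ determines, at each level $n$, the tower containing it together with the floor index, hence an infinite path $F(x)\in X_B$; the standard argument shows $F$ is a homeomorphism with $F\circ h=h_B\circ F$, giving the conjugacy in part (a). Because $Z$ lies in the base floors $Y^{(n)}_{t,1}$, a point is in $Z$ exactly when its path uses the minimal (base-floor) edge at every level, so $F(Z)=X_{B,\min}$. Perfectness of the ordering and the extension of $\widetilde h_B$ to $h_B$ follow from the essential minimality of each fiber together with Theorem 1.1 of \cite{HermanPutnamSkau92} (each relevant orbit closure meets $X_{B,\max}$, resp.\ $X_{B,\min}$, in the single point dictated by $h$), exactly as in the essentially minimal case.

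Finally, for the structural conditions (b)--(d) I would use that $\psi(X^{(n)}_t)\subset Y^{(n)}_{t,1}$ makes the minimal vertices at each level precisely the ``$Z$-towers'' $(t,1)$, and that these nest base-in-base (and, symmetrically at the top, for the maximal vertices). Condition (b) then says a $Z$-tower's base floor, read one level up, gives a minimal edge to another $Z$-tower; condition (c) is the uniqueness of that minimal predecessor, which follows because a level-$(n+1)$ tower base lies in exactly one level-$n$ base floor; and condition (d) records that over $Z$ the diagram is locally stationary, so applying range, then source, then range $m$ times returns the same reachable set. I expect the main obstacle to be the careful bookkeeping of the min/max edge structure across levels needed for (b)--(d): unlike the minimal case, $X_{B,\min}=Z$ need not be a single point, so one must track how the fiberwise minimal sets organize the base floors, and it is precisely here that Definition \ref{defnFiberwiseEssentiallyMinimal}(b)--(c) and the covering conclusion (f) of Lemma \ref{lemmaSystemSetUpForDiagrams} do the work.
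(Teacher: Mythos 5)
Your proposal is essentially the paper's own proof: the same iteration of Lemma \ref{lemmaSystemSetUpForDiagrams} to produce nested tower systems, the same diagram (vertices are the towers, edges are the floors of a level-$(n+1)$ tower lying in a level-$n$ base floor, ordered by floor index), the same path map $F$ with $F(Z) = X_{B,\min}$, and perfectness deduced from essential minimality of the fibers via Theorem 1.1 of \cite{HermanPutnamSkau92}. The only place you are thinner than the paper is parts (c) and (d) of the proposition, where ``a tower base lies in exactly one base floor'' (Lemma \ref{lemmaSystemSetUpForDiagrams}(d)) does not suffice on its own: one also needs that the full $h$-orbits of distinct tower bases are disjoint invariant clopen sets, so that conclusion (f) singles out the unique next-level tower whose orbit covers a given base floor, and then minimality of $v$ forces that tower's base inside $v$'s base floor --- but these are precisely the ingredients your closing paragraph flags, so the plan goes through.
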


\begin{proof}
Let $ Z $ and $ \psi $ correspond to $ ( X , h ) $ as in Definition \ref{defnFiberwiseEssentiallyMinimal}, and let $ ( \sP^{ ( n ) } ) $ be a generating sequence of partitions of $ X $. We will construct an ordered Bratteli diagram $ B = ( V , E , \leq ) $ such that $ ( X_B , h_B ) $ is conjugate to $ ( X , h ) $ via a map $ F : X \to X_B $ that satisfies $ F ( Z ) = X_{ B , \min } $. 

First, we construct a sequence $ ( \sS^{ ( n ) } ) $ of finite first return time maps subordinate to $ ( \sP^{ ( n ) } ) $. First, let $ \sS^{ ( 1 ) \prime } $ be any system of finite first return time maps subordinate to $ \sP^{ ( 1 ) } $ such that $ \sP_1 ( \sS^{ ( 1 ) \prime } ) $ is finer than $ \sP^{ ( 1 ) } $ (such a system exists by Proposition 1.9 of \cite{Herstedt21}). We construct the other systems inductively. For each $ n \in \bZ_{ > 0 } $, we apply Lemma \ref{lemmaSystemSetUpForDiagrams} with $ \sS^{ ( n ) \prime } $ in place of $ \sS $, $ \sP^{ ( n ) } $ in place of $ \sP $, $ \sP^{ ( n + 1 ) } $ in place of $ \sP' $, and get $ \sS^{ ( n ) } $ (that is $ \sS^0 $ in the lemma) and $ \sS^{ ( n + 1 )\prime } $ (that is $\sS'$ in the lemma) satisfying the conclusions of the lemma. Thus, to construct the sequence of systems, we only need to define $ \sS^{ ( 0 ) } $ by $ T^{ ( 0 ) } = 1 $ , $ X_1^{ ( 0 ) } = X $, $ K_1^{ ( 0 ) } = 1 $, $ Y_{ 1 , 1 }^{ ( 0 ) } = 1 $, and $ J_{ 1 , 1 }^{ ( 0 ) } = 1 $.

Now we begin to define $ B $. For each $ n \in \bZ_{ \geq 0 } $, define
\[
V_n = \{ ( n , t , k ) \setdiv \mbox{$ t \in \{ 1 , \ldots , T^{ ( n ) } \} $ and $ k \in \{ 1 , \ldots , K_t^{ ( n ) } \} $} \}
\]
The set of edges from $ ( n , t , k ) \in V_n $ to $ ( n + 1 , t' , k' ) \in V_{ n + 1 } $ is the set of all $ ( n + 1 , t' , k' , j ) $ such that $ h^j ( Y_{ t' , k' }^{ ( n + 1 ) } ) \subset Y_{ t , k }^{ ( n ) } $. Note that this is well defined; by assumption, $ h^j ( Y_{ t' , k' }^{ ( n + 1 ) } ) $ is a subset of an element of $ \sP_1 ( \sS^{ ( n ) } ) $, so we do not need to include $ t $ and $ k $ in the tuple defining this edge. We define an order on the edges $ r^{ -1 } ( ( n , t , k ) ) $ by $ ( n , t , k , j_1 ) \leq ( n , t , k , j_2 ) $ if $ j_1 < j_2 $.

We now construct the orbit map $ F : X \to X_B $. Let $ x \in X $. Then for each $ n \in \bZ_{ > 0 } $, there is precisely one $ t \in \{ 1 , \ldots , T^{ ( n ) } \} $, one $ k \in \{ 1 , \ldots , K_t^{ ( n ) } \} $, and one $ j \in \{ 0 , \ldots , J_{ t , k }^{ ( n ) } \} $ such that $ x \in h^j ( Y_{ t , k }^{ ( n ) } ) $. If $ x \in h^j ( Y_{ t , k }^{ ( n ) } ) \cap h^{ j' } ( Y_{ t' , k' }^{ ( n + 1 ) } ) $, then $ j' \geq j $, since otherwise we would have $ h^{ j - j' } ( Y_{ t , k }^{ ( n ) } ) \subset X_t^{ ( n + 1 ) } $ (this follows from Lemma \ref{lemmaSystemSetUpForDiagrams}(c) and (d)), which is not possible since by definition $ h^i ( Y_{ t , k }^{ ( n ) } ) \cap X_t^{ ( n + 1 ) } = \varnothing $ for $ i \in \{ 1 , \ldots , J_{ t , k }^{ ( n ) } - 1 \} $. This, combined with the fact that $ \sP_1 ( \sS^{ ( n + 1 ) } ) $ is finer than $ \sP_1 ( \sS^{ ( n ) } ) $, tells us that $ h^{ j' - j } ( Y_{ t' , k' }^{ ( n + 1 ) } ) \subset Y_{ t , k }^{ ( n ) } $, and therefore there is an edge from $ ( n , t , k ) $ to $ ( n + 1 , t' , k' ) $; namely, $ ( n + 1 , t' , k' , j' - j ) $. Thus, this gives us an infinite path in $ X_B $ associated to $ x $. We define $ F $ by sending $ x $ to this infinite path. 

We now show that $ F $ is injective. Suppose $ x , x' \in X $ and $ F ( x ) = F ( x' ) = ( e_1 , e_2 , \ldots ) $ where we write $ e_n = ( n , t_n , k_n , j_n ) $ for $ n \in \bZ_{ > 0 } $. First, by definition is it clear that for each $ n \in \bZ_{ > 0 } $, there are $ i_n , i_n' \in \{ 0 , \ldots , J_{ t_n , k_n }^{ ( n ) } - 1 \} $ such that $ x \in h^{ i_n } ( Y_{ t_n , k_n }^{ ( n ) } ) $ and $ x' \in h^{ i_n' } ( Y_{ t_n , k_n }^{ ( n ) } ) $. First, notice that $ i_1 = j_1 $ and $ i_1' = j_1 $ by definition of $ F $. Then, by definition of $ F $ we have $ j_2 = i_2 - j_1 $ and $ j_2 = i_2' - j_2 $; in particular, $ i_2 = i_2' $. Proceeding like this, we see that $ i_n = i_n' $ for all $ n \in \bZ_{ > 0 } $. Since $ ( \sP^{ ( n ) } ) $ is a generating sequence of partitions, so is $ ( \sP_1 ( \sS^{ ( n ) } ) ) $, and therefore $ \bigcap_{ n = 0 }^\infty h^{ i_n } ( Y_{ t_n , k_n }^{ ( n ) } ) $ contains at most one element of $ X $. Thus, $ x = x' $, and therefore $ F $ is injective.

Next, we show that $ F $ is surjective. Let $ e = ( e_1 , e_2 , \ldots ) \in X_B $ and write $ e_n = ( n , t_n , k_n , j_n ) $ for $ n \in \bZ_{ > 0 } $. We construct a sequence $ ( i_n ) $ with $ i_n \in \{ 0 , \ldots , J_{ t_n , k_n }^{ ( n ) } - 1 \} $ for all $ n \in \bZ_{ > 0 } $ such that $ \bigcap_{ n = 0 }^\infty h^{ i_n } ( Y_{ t_n , k_n }^{ ( n ) } ) $ is nonempty and contains the element of $ X $ that $ F $ maps to $ e $. First, let $ i_1 = j_1 $. Then, for all $ n \in \bZ_{ > 1 } $, let $ i_n = j_n + i_{ n - 1 } $ (note that this can be rewritten as $ i_n = \sum_{ k = 1 }^n j_k $). The claim now follows from the definition of $ B $, and $ F $ is therefore surjective.


So far we have shown that $ F $ is bijective. We now show that $ F $ is a homeomorphism. Let $ U ( e_1 , \ldots , e_N ) $ be an element of the basis of the topology of $ X_B $. For each $ n \in \{ 1 , \ldots , N \} $, write $ e_n = ( n , t_n , k_n , j_n ) $. For each $ n \in \{ 1 , \ldots , N \} $, write $ j_n' = \sum_{ k = 1 }^n j_k $. We claim that if $ x \in h^{ j_N' } ( Y_{ t_N , k_N }^{ ( N ) } ) $, then $ F ( x ) \in U ( e_1 , \ldots , e_N ) $. So let $ x \in h^{ j_N' } ( Y_{ t_N , k_N }^{ ( N ) } ) $. First notice that by definition of $ e_N $, 
\[
h^{ j_N } ( Y_{ t_N , k_N }^{ ( N ) } ) \subset Y_{ t_{ N - 1 } , k_{ N - 1 } }^{ ( N - 1 ) } .
\]
Thus,
\[
h^{ j_N' } ( Y_{ t_N , k_N }^{ ( N ) } ) \subset h^{ j_{ N - 1 }' } ( Y_{ t_{ N - 1 } , k_{ N - 1 } }^{ ( N - 1 ) } ) ,
\]
since $ j_N + j_{ N - 1 }' = j_N' $. Similarly, we have 
\[
h^{ j_{ N - 1 } } ( Y_{ t_{ N - 1 } , k_{ N - 1 } }^{ ( N - 1 ) } ) \subset Y_{ t_{ N - 2 } , k_{ N - 2 } }^{ ( N - 2 ) } .
\]
Thus, for every $ n \in \{ 1 , \ldots , N \} $, we have 
\[
x \in h^{ j_n' } ( Y_{ t_n , k_n }^{ ( N ) } ) ,
\]
and so since $ j_n' - j_{ n - 1 }' = j_n $, the $ n $th edge of $ F ( x ) $ is indeed $ e_n $, and $ F ( x ) \in U ( e_1 , \ldots , e_N ) $ as desired. Next, we claim that if $ x \in X $ satisfies $ F ( x ) \in U ( e_1 , \ldots , e_N ) $, then $ x \in h^{ j_N' } ( Y_{ t_N , k_N }^{ ( N ) } ) $. So let $ x \in X $ satisfy $ F ( x )  \in U ( e_1 , \ldots , e_N ) $. Then for each $ n \in \{ 1 , \ldots , N \} $, $ x \in h^{ i_n } ( J_{ t_n , k_n }^{ ( n ) } ) $ for some $ i_n \in \{ 0 , \ldots , J_{ t_n , k_n }^{ ( n ) } - 1 \} $. It is clear that $ i_1 = j_1 $. Then, notice that $ i_2 $ is such that $ j_2 = i_2 - j_1 $, and so $ i_2 = j_1 + j_2 = j_2' $. Repeating this process inductively, we see that $ i_N = j_N' $, and so $ x \in h^{ j_N' } ( Y_{ t_N , k_N }^{ ( N ) } ) $ as desired. Altogether, this shows that $ F $ is a homeomorphism.

If $ x \in Z $, then there are sequences of integers $ ( t_n ) $ and $ ( k_n ) $ such that $ x \in \bigcap_{ n = 0 }^\infty Y_{ t_n , k_n }^{ ( n ) } $. By definition of the order on $ B $, this means that $ F ( x ) \in X_{ B , \min } $. Conversely, suppose $ x \in X $ satisfies $ F ( x ) \in X_{ B , \min } $. Write $ F ( x ) = ( e_1 , e_2 , \ldots ) $ and for $ n \in \bZ_{ > 0 } $ write $ e_n = ( n , t_n , k_n , i_n ) $. Since $ e_n $ is minimal, $ i_n $ is the minimal element of $ \{ 0 , \ldots , J_{ t_n , k_n }^{ ( n ) } - 1 \} $ such that $ h^{ i_n } ( Y_{ t_n , k_n }^{ ( n ) } ) \subset X_{ t_{ n - 1 } }^{ ( n - 1 ) } $. But since $ X_{ t_n }^{ ( n ) } \subset X_{ t_{ n - 1 } }^{ ( n - 1 ) } $, we have $ i_n = 0 $. Hence, $ x \in \bigcap_{ n = 0 }^\infty Y_{ t_n , k_n }^{ ( n ) } $. Thus, $ F ( Z ) = X_{ B , \min } $. Also notice that if $ x \in h^{ -1 } ( Z ) $, there are sequences of integers $ ( t_n ) $ and $ ( k_n ) $ such that $ x \in \bigcap_{ n = 0 }^\infty h^{ J_{ t_n , k_n }^{ ( n ) } - 1 } ( Y_{ t_n , k_n }^{ ( n ) } ) $. By definition of the order on $ B $, this means that $ F ( x ) \in X_{ B , \max } $. Similarly, the converse holds, and so $ F ( h^{ - 1 } ( Z ) ) = X_{ B , \max } $.

We now show that $ ( F \circ h )|_{ X \setminus h^{ -1 } ( Z ) } = ( \widetilde{ h }_B \circ F )|_{ X \setminus h^{ -1 } ( Z ) } $. Let $ x \in X \setminus h^{ -1 } ( Z ) $. For each $ n \in \bZ_{ > 0 } $, let $ t_n \in \{ 1 , \ldots , T^{ ( n ) } \} $, let $ k_n \in \{ 1 , \ldots , K_{ t_n }^{ ( n ) } \} $, and let $ j_n \in \{ 0 , \ldots , K_{ t_n }^{ ( n ) } - 1 \} $ satisfy $ x \in h^{ j_n } ( Y_{ t_n , k_n }^{ ( n ) } ) $. Since $ x \notin h^{ -1 } ( Z ) $, there is some smallest $ N \in \bZ_{ > 0 } $ such that $ j_N \neq J_{ t_N , k_N }^{ ( N ) } - 1 $. We have 
\[
h^{ j_N + 1 } ( Y_{ t_N , k_N }^{ ( N ) } ) \subset J_{ t_{ N - 1 } , k_{ N - 1 } }^{ ( N - 1 ) } ( Y_{ t_{ N - 1 } , k_{ N - 1 } }^{ ( N - 1 ) } )
\]
and so by Lemma \ref{lemmaSystemSetUpForDiagrams}(d), there is a $ k_{ N - 1 }' \in \{ 1 , \ldots , K_{ t_{ N - 1 } }^{ ( N - 1 ) } \} $ such that 
\begin{equation}\label{eq: hJN+1 equation}
h^{ j_N + 1 } ( Y_{ t_N , k_N }^{ ( N ) } ) \subset Y_{ t_{ N - 1 } , k_{ N - 1 }' }^{ ( N - 1 ) } .
\end{equation} 
Inductively, we can find for each $ n \in \{ 1 , \ldots , N - 2 \} $ a $ k_n' \in \{ 1 , \ldots , K_{ t_n }^{ ( n ) } \} $ such that $ Y_{ t_{ n + 1 }' , k_{ n + 1 }' }^{ ( n + 1 ) } \subset Y_{ t_{ n }' , k_n' }^{ ( n ) } $. Since $ j_N + 1 < J_{ t_N , k_N }^{ ( N ) } $ and since for each $ n \in \bZ_{ > N } $ we have $ X_{ t_n }^{ ( n ) } \subset X_{ t_N }^{ ( N ) } $, it follows that $ j_n + 1 < J_{ t_N , k_N }^{ ( N ) } $ as well. So let $ k_n' = k_n $ and let $ j_n' = 0 $ for $ n \in \{ 1 , \ldots , N - 1 \} $, and let $ j_n' = j_n + 1 $ for $ n \in \bZ_{ \geq N } $. Then for each $ n \in \bZ_{ > 0 } $, $ h ( x ) \in h^{ j_n' } ( Y_{ t_n , k_n' }^{ ( n ) } ) $. 

Now write $ F ( x ) = ( e_1 , e_2 , \ldots ) $ and $ \widetilde{ h }_B ( F ( x ) ) = ( e_1' , e_2' , \ldots ) $ and for each $ n \in \bZ_{ > 0 } $ write $ e_n = ( n , s_n , l_n , i_n ) $ and $ e_n' = ( n , s_n' , l_n' , i_n' ) $. By definition of $ F $, for all $ n \in \bZ_{ > 0 } $ we have $ s_n = t_n $, $ l_n = k_n $, and $ i_n = j_n - j_{ n - 1 } $ where $ j_0 = 0 $. We also see that $ N $ is the smallest element of $ \bZ_{ > 0 } $ such that $ e_N \notin E_{ \max } $, so $ e_N' $ is the successor of $ e_N $, $ ( e_1', \ldots , e_{ N - 1 }' ) $ is the minimal path such that $ r ( e_{ N - 1 }' ) = s ( e_N' ) $, and $ e_n' = e_n $ for all $ n \in \bZ_{ > N } $. In particular we see that $ s_n' = t_n $ for all $ n \in \bZ_{ > 0 } $, $ l_n' = k_n $ for all $ n \in \bZ_{ \geq N } $, $ i_n' = 0 $ for $ n \in \{ 1 , \ldots , N - 1 \} $ and $ i_n' = i_n = j_n - j_{ n - 1 } $ for all $ n \in \bZ_{ > N } $. Observe that $ i_N' $ is the smallest integer greater than $ i_N $ such that $ h^{ i_N' } ( Y_{ t_N , k_N }^{ ( N ) } ) \subset Y_{ t_{ N - 1 } , l_{ N - 1 }' }^{ ( N - 1 ) } $. Thus, this combined with 
\begin{equation}\label{eq: hiN equation}
h^{ i_N } ( Y_{ t_N , k_N }^{ ( N ) } ) \subset Y_{ t_{ N - 1 } , k_{ N - 1 } }^{ ( N - 1 ) } 
\end{equation}
tells us 
\begin{align*}
i_N' &= i_N + J_{ t_{ N - 1 } , k_{ N - 1 } }^{ ( N - 1 ) } \\
&= j_N - j_{ N - 1 } + J_{ t_{ N - 1 } , k_{ N - 1 } }^{ ( N - 1 ) } \\
&= j_N - ( J_{ t_{ N - 1 } , k_{ N - 1 } }^{ ( N - 1 ) } - 1 ) + J_{ t_{ N - 1 } , k_{ N - 1 } }^{ ( N - 1 ) } \\
&= j_N + 1 \\
&= j_N' .
\end{align*}
From (\ref{eq: hJN+1 equation}) and (\ref{eq: hiN equation}) we see $ l_{ N - 1 }' = k_{ N - 1 }' $. Similarly, for $ n \in \{ 2 , \ldots , N - 1 \} $ since $ i_n' = j_n' = 0 $, we have $ l_{ n - 1 }' = k_{ n - 1 }' $. Write $ F ( h ( x ) ) = ( e_1\dprime , e_2\dprime , \ldots ) $ and for each $ n \in \bZ_{ > 0 } $ write $ e_n\dprime = ( n , s_n\dprime , l_n\dprime , i_n\dprime ) $. For each $ n \in \bZ_{ > 0 } $, we have $ s_n\dprime = t_n' = t_n $ and $ l_n\dprime = k_n' $. For $ n \in \{ 1 , \ldots , N - 1 \} $ , we have $ i_n\dprime = i_n' = 0 $. We also have $ i_N\dprime = j_N' - j_{ N - 1 }' = j_N' - 0 = i_N' $. For $ n \in \bZ_{ > N } $, we have 
\begin{align*}
i_n\dprime = j_n' - j_{ n - 1 }' &= j_n + 1 - ( j_{ n - 1 } - 1 ) \\
&= j_n - j_{ n - 1 } \\
&= i_n \\
&= i_n' .
\end{align*}
Altogether, we see $ F ( h ( x ) ) = \widetilde{ h }_B ( F ( x ) ) $, and so 
\begin{equation}\label{eq: partial orbit map}
( F \circ h )|_{ X \setminus h^{ -1 } ( Z ) } = ( \widetilde{ h }_B \circ F )|_{ X \setminus h^{ -1 } ( Z ) } .
\end{equation}

We now show that the order on $ B $ is perfect. For each $ x \in X $, $ z $ is in the minimal set of the essentially minimal zero-dimensional system $ ( \psi^{ -1 } ( Z ) , h|_{ \psi^{ -1 } ( Z ) } ) $, $ \overline{ \orb } ( x ) $ contains exactly one element of $ Z $ and exactly one element of $ h^{ -1 } ( Z ) $. Now, (\ref{eq: partial orbit map}) combined with $ F ( Z ) = X_{ B , \min } $ and $ F ( h^{ -1 } ( Z ) ) = X_{ B , \max } $ tells us that the ordering on $ B $ is perfect. It is now clear that $ F \circ h = h_B \circ F $. This proves conclusion (a) of the proposition.

Before we prove the rest, we first prove two claims that will be used a few times in the proof.

Claim ($\ast$): Let $ n \in \bZ_{ > 0 } $, $ t_n \in \{ 1 , \ldots , T^{ ( n ) } \} $, $ k_n \in \{ 1 , \ldots , K_{ t_n }^{ ( n ) } \} $, and $ t_{ n + 1 } \in \{ 1 , \ldots , T^{ ( n + 1 ) } \} $. If $ X_{ t_{ n + 1 } }^{ ( n + 1 ) } \subset h^{ J_{ t_n , k_n }^{ ( n ) } } ( Y_{ t_n , k_n }^{ ( n ) } ) $, then for any $ k_{ n + 1 } \in \{ 1 , \ldots , K_{ t_{ n + 1 } }^{ ( n + 1 ) } \} $, there is an $ i_{ n + 1 } \in \{ 1 , \ldots , J_{ t_{ n + 1 } , k_{ n + 1 } }^{ ( n + 1 ) } \} $ such that $ e = ( n + 1 , t_{ n + 1 } , k_{ n + 1 } , i_{ n + 1 } ) $ is a maximal edge with $ s ( e ) = ( n , t_n , k_n ) $.

We now prove ($\ast$). Let $ k_{ n + 1 } \in \{ 1 , \ldots , K_{ t_{ n + 1 } }^{ ( n + 1 ) } \} $. Then 
\[
h^{ J_{ t_{ n + 1 } , k_{ n + 1 } }^{ ( n + 1 ) } } ( Y_{ t_{ n + 1 } , k_{ n + 1 } } ) \subset X_{ t_{ n + 1 }^{ ( n + 1 ) } } \subset h^{ J_{ t_n , k_n }^{ ( n ) } } ( Y_{ t_n , k_n }^{ ( n ) } ),
\]
so we have
\[
h^{ J_{ t_{ n + 1 } , k_{ n + 1 } }^{ ( n + 1 ) } - J_{ t_n , k_n }^{ ( n ) } } ( Y_{ t_{ n + 1 } , k_{ n + 1 } } ) \subset Y_{ t_n , k_n }^{ ( n ) } 
\]
Set $ i_{ n + 1 } = J_{ t_{ n + 1 } , k_{ n + 1 } }^{ ( n + 1 ) } - J_{ t_n , k_n }^{ ( n ) } $. Let $ j \in \{ 1 , \ldots , J_{ t_n , k_n }^{ ( n ) } - 1 \} $. Then 
\[
h^{ i_{ n + 1 } + j } ( Y_{ t_{ n + 1 } , k_{ n + 1 } }^{ ( n + 1 ) } ) \subset h^j ( Y_{ t_n , k_n }^{ ( n ) } ),
\]
and since $ h^j ( Y_{ t_n , k_n }^{ ( n ) } ) \cap \left( \bigsqcup_{ t = 1 }^{ T^{ ( n ) } } X_t^{ ( n ) } \right) = \varnothing $, we indeed see that $ e = ( n + 1 , t_{ n + 1 } , k_{ n + 1 } , i_{ n + 1 } ) \in E_{ \max } $. This proves ($\ast$).

Claim ($\ast \ast$): If there is a $ k_{ n + 1 } \in \{ 1 , \ldots , K_{ t_{ n + 1 } }^{ ( n + 1 ) } \} $ and a $ i \in \{ 1 , \ldots , J_{ t_{ n + 1 } , k_{ n + 1 } }^{ ( n + 1 ) } \} $ such that $ e = ( n + 1 , t_{ n + 1 } , k_{ n + 1 } , i_{ n + 1 } ) $ is a maximal edge with $ s ( e ) = ( n , t_n , k_n ) $, then $ X_{ t_{ n + 1 } }^{ ( n + 1 ) } \subset h^{ J_{ t_n , k_n }^{ ( n ) } } ( Y_{ t_n , k_n }^{ ( n ) } ) $. 

We now prove ($\ast \ast$). Since $ s ( e ) = ( n , t_n , k_n ) $, we have
\[
h^{ i_{ n + 1 } } ( Y_{ t_{ n + 1 }, k_{ n + 1 } }^{ ( n + 1 ) } ) \subset Y_{ t_n , k_n }^{ ( n ) }
\]
Since $ e $ is maximal, there is no $ j \in \{ i_{ n + 1 } + 1 , \ldots , J_{ t_{ n + 1 } , k_{ n + 1 } }^{ ( n + 1 ) } - 1 \} $ with $ h^j ( Y_{ t_{ n + 1 }, k_{ n + 1 } }^{ ( n + 1 ) } ) \subset X_{ t_n , k_n }^{ ( n ) } $. However, notice that 
\[
h^{ i_{ n + 1 } + J_{ t_n , k_n }^{ ( n ) } } ( Y_{ t_{ n + 1 } , k_{ n + 1 } }^{ ( n + 1 ) } ) \subset h^{ J_{ t_n , k_n }^{ ( n ) } } (  Y_{ t_n , k_n }^{ ( n ) } ) \subset X_{ t_n , k_n }^{ ( n ) } .
\]
Thus, we must have $ i_{ n + 1 } + J_{ t_n , k_n }^{ ( n ) } = J_{ t_{ n + 1 } , k_{ n + 1 } }^{ ( n + 1 ) } $. Thus, $ X_{ t_{ n + 1 } }^{ ( n + 1 ) } \cap h^{ J_{ t_n , k_n }^{ ( n ) } } (  Y_{ t_n , k_n }^{ ( n ) } )  \neq \varnothing $, and so by Lemma \ref{lemmaSystemSetUpForDiagrams}(e), we actually have $ X_{ t_{ n + 1 } }^{ ( n + 1 ) } \subset h^{ J_{ t_n , k_n }^{ ( n ) } } (  Y_{ t_n , k_n }^{ ( n ) } ) $. This proves ($\ast \ast$).

We now prove conclusion (b) of the proposition. Let $ v \in V_{ \min } $. Write $ v = ( n , t_n , k_n ) $. Since $ v \in V_{ \min } $, there is some $ v\dprime \in V_{ n + 1 } $ and some $ e' \in E_{ \min } $ with $ s ( e' ) = v $ and $ r ( e' ) = v\dprime $. Write $ v\dprime = ( n + 1 , t_{ n + 1 } , k_{ n + 1 } ) $ and then $ e' = ( n + 1 , t_{ n + 1 } , k_{ n + 1 } , i_{ n + 1 } ) $. By Lemma \ref{lemmaSystemSetUpForDiagrams}(d), since $ e' \in E_{ \min } $, we have $ i_{ n + 1 } = 0 $, so $ Y_{ t_{ n + 1 } , k_{ n + 1 } }^{ ( n + 1 ) } \subset Y_{ t_n , k_n }^{ ( n ) } $. Again by Lemma \ref{lemmaSystemSetUpForDiagrams}(d), we have 
\begin{equation}\label{eq: whole base is a subset}
X_{ t_{ n + 1 } }^{ ( n + 1 ) } \subset Y_{ t_n , k_n }^{ ( n ) } .
\end{equation}
Now, let $ t_{ n + 2 } \in \{ 1 , \ldots , T^{ ( n + 2 ) } \} $ satisfy 
\[
X_{ t_{ n + 2 } }^{ ( n + 2 ) } \cap \left( \bigcup_{ j \in \bZ } h^j ( X_{ t_{ n + 1 } }^{ ( n + 1 ) } ) \right) \neq \varnothing .
\]
By Lemma \ref{lemmaSystemSetUpForDiagrams}(d), there is actually a $ k_{ n + 1 }' \in \{ 1 , \ldots , K_{ t_{ n + 1 } }^{ ( n + 1 ) } \} $ such that
\[
X_{ t_{ n + 2 } }^{ ( n + 2 ) } \subset Y_{ t_{ n + 1 } , k_{ n + 1 }' }^{ ( n + 1 ) }.
\]
So for any $ k_{ n + 2 } \in \{ 1 , \ldots , K_{ t_{ n + 2 } }^{ ( n + 2 ) } \} $, we have a minimal edge $ e\dprime = ( n + 2 , t_{ n + 2 } , k_{ n + 2 } , 0 ) $ with $ s ( e\dprime ) = v' = ( n + 1 , t_{ n + 1 } , k_{ n + 1 }' ) $. Thus, $ v' \in V_{ \min } $. By (\ref{eq: whole base is a subset}), $ Y_{ t_{ n + 1 } , k_{ n + 1 }' }^{ ( n + 1 ) } \subset Y_{ t_n , k_n }^{ ( n ) } $, and so there is a minimal edge $ e = ( n + 1 , t_{ n + 1 } , k_{ n + 1 }' , 0 ) $ with $ s ( e ) = v $ and $ r ( e ) = v' $. 

Now let $ v \in V_{ \max } $ and write $ v = ( n , t_n , k_n ) $. Since $ v \in V_{ \max } $, there is some $ v\dprime \in V_{ n + 1 } $ and some $ e' \in E_{ \max } $ with $ s ( e' ) = V $ and $ r ( e' ) = v\dprime $. Write $ v\dprime = ( n + 1 , t_{ n + 1 } , k_{ n + 1 } ) $ and then $ e' = ( n + 1 , t_{ n + 1 } , k_{ n + 1 } , i_{ n + 1 } ) $. By $(\ast \ast)$, we have 
\[
X_{ t_{ n + 1 } }^{ ( n + 1 ) } \subset h^{ J_{ t_n , k_n }^{ ( n ) } } ( Y_{ t_n , k_n }^{ ( n ) } )
\]
Now, let $ t_{ n + 2 } \in \{ 1 , \ldots , T^{ ( n + 2 ) } \} $ satisfy 
\[
X_{ t_{ n + 2 } }^{ ( n + 2 ) } \cap \left( \bigcup_{ j \in \bZ } h^j ( X_{ t_{ n + 1 } }^{ ( n + 1 ) } ) \right) \neq \varnothing .
\]
By Lemma \ref{lemmaSystemSetUpForDiagrams}(e), there is actually a $ k_{ n + 1 }' \in \{ 1 , \ldots , K_{ t_{ n + 1 } }^{ ( n + 1 ) } \} $ such that
\[
X_{ t_{ n + 2 } }^{ ( n + 2 ) } \subset h^{ J_{ t_{ n + 1 } , k_{ n + 1 }' }^{ ( n + 1 ) } } ( Y_{ t_{ n + 1 } , k_{ n + 1 }' }^{ ( n + 1 ) } ) .
\]
By $(\ast)$, $ v' = ( n + 1 , t_{ n + 1 } , k_{ n + 1 }' ) \in V_{ \max } $. By $(\ast)$, there is $ e \in E_{ \max } $ with $ s ( e ) = v $ and $ r ( e ) = v' $. This completes the proof of conclusion (b).

We now prove conclusion (c) of the proposition. Let $ v \in V_{ \min } $ and $ e \in E_{ \min } $ satisfy $ r ( e ) \in R ( v ) $. Write $ v = ( n , t_n , k_n ) $ and $ e = ( n + 1 , t_{ n + 1 } , k_{ n + 1 } , i_{ n + 1 } ) $. Since $ r ( e ) \in R ( v ) $, there is an edge $ e' = ( n + 1 , t_{ n + 1 } , k_{ n + 1 } , i_{ n + 1 }' ) $ with $ s ( e' ) = v $, which tells us there is $ j_{ n + 1 } \in \{ 0 , \ldots , J_{ t_{ n + 1 } , k_{ n + 1 } }^{ ( n + 1 ) } - 1 \} $ such that $ h^{ j_{ n + 1 } } ( Y_{ t_{ n + 1 } , k_{ n + 1 } } ) \subset Y_{ t_n , k_n }^{ ( n ) } $. By Lemma \ref{lemmaSystemSetUpForDiagrams}(f), $ Y_{ t_n , k_n }^{ ( n ) } \subset \bigcup_{ j \in \bZ } h^j ( X_{ t_{ n + 1 } }^{ ( n + 1 ) } ) $. Since $ v \in V_{ \min } $, there is some edge $ e\dprime = ( n + 1 , t_{ n + 1 } , k_{ n + 1 }\dprime , i_{ n + 1 }\dprime ) \in E_{ \min } $ with $ s ( e ) = v $. By Lemma \ref{lemmaSystemSetUpForDiagrams}(d), since $ e\dprime \in E_{ \min } $, we have $ i_{ n + 1 }\dprime = 0 $, so $ Y_{ t_{ n + 1 } , k_{ n + 1 }\dprime }^{ ( n + 1 ) } \subset Y_{ t_n , k_n }^{ ( n ) } $. Again by Lemma \ref{lemmaSystemSetUpForDiagrams}(d), we actually have 
\begin{equation}\label{eq: Xtn+1 subset Ytn}
X_{ t_{ n + 1 } }^{ ( n + 1 ) } \subset Y_{ t_n , k_n }^{ ( n ) } .
\end{equation}
Now, since $ e \in E_{ \min } $, by Lemma \ref{lemmaSystemSetUpForDiagrams}(d), we have $ i_{ n + 1 } = 0 $. But by (\ref{eq: Xtn+1 subset Ytn}), we must have $ Y_{ t_{ n + 1 } , k_{ n + 1 } }^{ ( n + 1 ) } \subset  Y_{ t_n , k_n }^{ ( n ) } $, which means that $ s ( e ) = v $. 

Let $ v \in V_{ \max } $ and $ e \in E_{ \max } $ satisfy $ r ( e ) \in R ( v ) $. Write $ v = ( n , t_n , k_n ) $ and $ e = ( n + 1 , t_{ n + 1 } , k_{ n + 1 } , i_{ n + 1 } ) $. Since $ r ( e ) \in R ( v ) $, there is an edge $ e' = ( n + 1 , t_{ n + 1 } , k_{ n + 1 } , i_{ n + 1 }' ) $ with $ s ( e' ) = v $, which tells us there is $ j_{ n + 1 } \in \{ 0 , \ldots , J_{ t_{ n + 1 } , k_{ n + 1 } }^{ ( n + 1 ) } - 1 \} $ such that $ h^{ j_{ n + 1 } } ( Y_{ t_{ n + 1 } , k_{ n + 1 } }^{ ( n + 1 ) } ) \subset Y_{ t_n , k_n }^{ ( n ) } $. By Lemma \ref{lemmaSystemSetUpForDiagrams}(f), $ Y_{ t_n , k_n }^{ ( n ) } \subset \bigcup_{ j \in \bZ } h^j ( X_{ t_{ n + 1 } }^{ ( n + 1 ) } ) $. Since $ v \in V_{ \max } $, there is some edge $ e\dprime = ( n + 1 , t_{ n + 1 } , k_{ n + 1 }\dprime , i_{ n + 1 }\dprime ) \in E_{ \max } $ with $ s ( e ) = v $. By $(\ast \ast)$, we have 
\[
X_{ t_{ n + 1 } }^{ ( n + 1 ) } \subset h^{ J_{ t_n , k_n }^{ ( n ) } } ( Y_{ t_n , k_n }^{ ( n ) } ) .
\]
By ($\ast$), $ s ( e ) = v $. This completes the proof of conclusion (c).

Now we prove conclusion (d) of this lemma. Let $ v \in V_{ \min } $, let $ m \in \bZ_{ > 0 } $, and write $ v = ( n , t_n , k_n ) $. It is clear that $ v \in ( S^m \circ R^m ) ( v ) $, and so 
\[
R^m ( v ) \subset ( R^m \circ S^m \circ R^m ) ( v ) .
\]
Now, let $ w = ( n + m , t_{ n + m } , k_{ n + m } ) \in R^m ( v ) $. By Lemma \ref{lemmaSystemSetUpForDiagrams}(d) (applied $ m $ times), we have 
\[
\bigcup_{ j \in \bZ } h^j ( X_{ t_{ n + m } }^{ ( n + m ) } ) \subset \bigcup_{ j \in \bZ } h^j ( X_{ t_n }^{ ( n ) } )
\]
So every element of $ ( S^m \circ R^m ) ( v ) $ has the form $ ( n , t_n , k_n' ) $ for some $ k_n' \in \{ 1 , \ldots , K_{ t_n }^{ ( n ) } \} $. Let $ w' = ( n + m , t_{ n + m }' , k_{ n + m }' )  \in ( R^m \circ S^m \circ R^m ) ( v ) $, meaning there is an path $ ( e_1 , \ldots , e_m ) $ with $ s ( e_1 ) = ( n , t_n , k_n' ) $ for some $ k_n' \in \{ 1 , \ldots , K_{ t_n }^{ ( n ) } \} $ and $ r ( e_m ) = w' $. Write $ e_m = ( n + m , t_{ n + m }' , k_{ n + m }' , i_{ n + m }' ) $. This means that $ h^{ i_{ n + m }' } ( Y_{ t_{ n + m }' , k_{ n + m }' }^{ ( n + m ) } ) \subset Y_{ t_n , k_n' }^{ ( n ) } $. But then by Lemma \ref{lemmaSystemSetUpForDiagrams}(d) (applied $ m $ times), we must have $ Y_{ t_{ n + m }' , k_{ n + m }' }^{ ( n + m ) } \subset Y_{ t_n , k_n }^{ ( n ) } $. Therefore $ w' \in R^m ( v ) $ (by a minimal path), as desired. An identical argument using Lemma \ref{lemmaSystemSetUpForDiagrams}(e) in place of (d) shows that this equation also holds when $ v \in V_{ \max } $. This proves conclusion (d) of this proposition and therefore finishes the proof of the proposition.
\end{proof}

\begin{prop}\label{propTelescopingPreservesAssociatedDiagramProperties}
Let $ ( X , h ) $ be a fiberwise essentially minimal zero-dimensional system and let $ B = ( V , E , \leq ) $ be an ordered Bratteli diagram that satisfies the conclusions of Proposition \ref{propAssociatedBratteliDiagram}. If $ B' = ( V' , E' , \leq' ) $ is a telescoping of $ B $, then $ B' $ also satisfies the conclusions of Proposition \ref{propAssociatedBratteliDiagram}.
\end{prop}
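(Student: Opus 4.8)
The plan is to verify the four conclusions (a)--(d) of Proposition \ref{propAssociatedBratteliDiagram} for the telescoped diagram $B'$ one at a time, using the standard dictionary between $B$ and its telescoping. Fix telescoping levels $0 = n_0 < n_1 < n_2 < \cdots$, so that $V'_m = V_{n_m}$ and the edges of $B'$ from level $m$ to level $m+1$ are the paths $P_{n_m, n_{m+1}}$ of $B$, ordered reverse-lexicographically (compare last edges first). The facts I will use repeatedly are: a telescoped edge lies in $E'_{\min}$ (resp. $E'_{\max}$) exactly when each of its constituent edges lies in $E_{\min}$ (resp. $E_{\max}$), i.e.\ minimal telescoped edges are the concatenations of minimal edges; consequently $V'_{\min} = V_{\min} \cap V'$ and $V'_{\max} = V_{\max} \cap V'$ (the inclusion ``$\supseteq$'' uses conclusion (b) of $B$ to extend a minimal edge out of a minimal vertex to a minimal path staying in $V_{\min}$); and for $v \in V'_m$ one has $(R')^{p}(v) = R^{\,n_{m+p}-n_m}(v)$ and dually $(S')^{p} = S^{\,n_{m+p}-n_m}$, where $R,S$ are the vertex maps of $B$.

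Conclusion (a) follows from the general fact that telescoping induces a homeomorphism $X_{B'} \to X_B$ (identify a telescoped infinite path with the concatenated path of $B$) conjugating the partial Vershik transformations and carrying $X_{B',\min}$ onto $X_{B,\min}$ and $X_{B',\max}$ onto $X_{B,\max}$; in particular the ordering of $B'$ is again perfect, and composing with the conjugacy of Proposition \ref{propAssociatedBratteliDiagram}(a) gives (a). Conclusion (b) is obtained by iteration: given $v \in V'_{\min} = V_{\min}$, apply conclusion (b) of $B$ repeatedly to build a chain $v = a_0 \to a_1 \to \cdots \to a_{n_{m+1}-n_m}$ whose every edge is minimal and every $a_i$ lies in $V_{\min}$; this chain is a single minimal edge of $B'$ from $v$ to a vertex of $V'_{\min}$, and the maximal case is identical. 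Conclusion (d) is essentially free: under the dictionary, the identity $(R')^{p}(v) = ((R')^{p}\circ (S')^{p}\circ (R')^{p})(v)$ for $v \in V'_{\min}$ is literally $R^{N}(v) = (R^{N}\circ S^{N}\circ R^{N})(v)$ with $N = n_{m+p}-n_m$, an instance of conclusion (d) of $B$, which holds for every positive exponent.

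Thus the real content, and what I expect to be the main obstacle, is conclusion (c). Writing $L = n_{m+1}-n_m$, conclusion (c) for $B'$ unwinds to the following statement about $B$: if $v \in V_{\min}$ and $w \in R^L(v)$, then the source of the unique length-$L$ minimal path ending at $w$ equals $v$. I would first record two consequences of the hypotheses on $B$. From conclusion (c) applied at a single level, distinct vertices of $V_{\min}$ at a common level have disjoint $R$-images, so each vertex has a unique incoming minimal edge and hence a well-defined minimal $L$-ancestor $w_0 \in V_{\min}$ with $w \in R^L(w_0)$. From conclusion (d) of $B$, for minimal vertices $u_1,u_2$ at a common level the sets $R^L(u_1)$ and $R^L(u_2)$ are either equal or disjoint. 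Applying the dichotomy to $v$ and $w_0$, whose $R^L$-images share $w$, gives $R^L(v) = R^L(w_0)$; the crux is to upgrade this to $v = w_0$.

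The difficulty is precisely that an arbitrary witness $\tilde w \in R^L(v)$ with $w \in R(\tilde w)$ need not itself be minimal, so one cannot simply climb the minimal-ancestor tree using conclusion (c) alone. I would resolve this by induction on $L$, at each stage invoking conclusion (d) of $B$ to replace a non-minimal intermediate vertex by a minimal one with the same forward reach; this reduces the claim at level $L$ to the claim at level $L-1$. Equivalently, the statement to prove by induction is that for $v \in V_{\min}$ every $w \in R^L(v)$ is already reachable from $v$ by a minimal path, the case $L=1$ being exactly conclusion (c). Should the purely combinatorial reconciliation prove stubborn, the fallback is to reinterpret $V_{\min}$ through the conjugacy of part (a): a minimal vertex corresponds to the clopen tower base meeting $Z$, distinct minimal vertices to disjoint such bases, and their forward images to disjoint families of refining bases, which forces the required injectivity of $R^L$ on $V_{\min}$ and closes (c).
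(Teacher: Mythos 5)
Your handling of conclusions (a), (b) and (d) of Proposition \ref{propAssociatedBratteliDiagram} is correct and coincides with the paper's own argument: telescoping induces a conjugation of the path-space systems, (b) follows by iterating (b) of $B$ level by level, and (d) is literally (d) of $B$ with exponent $n_{m+p}-n_m$. Your reduction of (c), and your two auxiliary facts (distinct minimal vertices at a common level have disjoint $R$-images; by (d) their $R^L$-images are equal or disjoint), are also right. The gap is your primary plan for the crux: no induction on $L$ using only the combinatorial conclusions (b), (c), (d) can close it, because those three conditions genuinely do not imply (c) for the telescoping. Consider the ordered diagram with $V_1=\{v,p\}$, $V_2=\{a_1,a_2,b_1,b_2\}$, $V_3=\{w_1,w_2\}$, repeated with period two afterwards: single edges $v\to a_1$, $v\to a_2$, $p\to b_1$, $p\to b_2$; two edges into $w_1$, from $a_1$ (minimal) and from $b_2$ (maximal); two edges into $w_2$, from $b_1$ (minimal) and from $a_2$ (maximal); then single edges from $w_1$ to $a_1',a_2'$ and from $w_2$ to $b_1',b_2'$, and so on periodically. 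One checks that (b), (c), (d) hold at every level (the minimal vertices at level $2$ are $a_1,b_1$, with disjoint ranges $\{w_1\},\{w_2\}$, and $R^2(v)=R^2(p)=\{w_1,w_2\}$ makes the (d)-identities hold). But telescoping levels $1$--$3$, the vertex $v$ is minimal and $w_2\in R^2(v)$ via $a_2$, while the minimal path into $w_2$ is $p\to b_1\to w_2$, with source $p\neq v$: the telescoped (c) fails, exhibiting exactly the situation $R^L(v)=R^L(w_0)$ with $v\neq w_0$ both minimal that you called the crux. (This diagram of course violates conclusion (a) --- its ordering is not even perfect --- which is precisely the point: conclusion (a) must enter the proof. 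Note also that the paper's own proof of (c), the ``proceeding like this'' induction, stalls on this same example at its second step, where the comparison path passes through the non-minimal vertex $a_2$, so your diagnosis of the difficulty is sharper than the paper's treatment of it.)

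Your fallback is the correct repair, but it needs to be carried out rather than asserted; here is how. Suppose $v\neq p$ are minimal vertices at a common level of $B$ and $w\in R^L(v)\cap R^L(p)$ for some $L$. Iterating (b), choose minimal infinite paths $z_v$ through $v$ and $z_p$ through $p$; they are distinct since $v\neq p$. Choose any infinite path $x$ through $v$ and $w$, and any infinite path $y$ through $p$ and $w$. Since the ordering of $B$ is perfect, Lemma \ref{lemmaPassesThoughSameVertex} applies: $z_v$ and $x$ share the vertex $v$, $x$ and $y$ share $w$, and $y$ and $z_p$ share $p$, so $z_v$ and $z_p$ lie in a single $h_B$-orbit. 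Under the conjugacy of conclusion (a), $z_v$ and $z_p$ correspond to distinct points of $Z$; but distinct points of $Z$ never share an $h$-orbit, since $z'=h^n(z)$ with $z,z'\in Z$ gives $z'=\psi(z')=\psi(h^n(z))=\psi(z)=z$. This contradiction shows that distinct minimal vertices of $B$ at a common level have disjoint $R^L$-images for every $L$, and the telescoped (c) follows at once: the minimal path into any $w\in R^L(v)$ has source a minimal vertex $w_0$ with $w\in R^L(w_0)$, whence $w_0=v$. With this substitution for your induction, the rest of your proposal stands.
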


\begin{proof}
Let $ ( k_n ) $ be the telescoping sequence corresponding to $ B' $, so that $ V_n' = V_{ k_n } $ and $ E_n' = P_{ k_n + 1 , k_{ n + 1 } } $ for all $ n \in \bZ_{ > 0 } $ (where $ k_0 = 0 $). This identification induces a map $ \vphi : X_B \to X_{ B' } $ by sending $ e = ( e_1 , e_2 , \ldots ) $ to $ \vphi ( e ) = ( ( e_1 , \ldots , e_{ k_1 } ) , ( e_{ k_1 + 1 } , \ldots , e_{ k_2 } ) , \ldots ) $. By definition of the induced order on a telescoped Bratteli diagram, $ \vphi $ is a conjugation, and therefore $ B' $ satisfies conclusion (a) of Proposition \ref{propAssociatedBratteliDiagram}.

We now show that conclusion (b) holds. Let $ v \in V_{ n , \min }' $. What we are looking for is a $ v' \in V_{ n + 1 , \min }' $ and $ e \in E_{ n + 1 , \min }' $ such that $ s ( e ) = v $ and $ r ( e ) = v' $. We can regard all of this as happening in $ B $ instead of $ B' $, so that we have $ v \in V_{ k_n , \min } $, and we are looking for $ v' \in V_{ k_{ n + 1 } , \min } $ and a minimal path $ ( e_{ k_n + 1 } , \ldots , e_{ k_{ n + 1 } } ) \in P_{ k_n + 1 , k_{ n + 1 } } $. By Proposition \ref{propAssociatedBratteliDiagram}(b), there is a $ e_{ k_n +  1 } \in E_{ k_n + 1 , \min } $ and a $ v_{ k_n + 1 } \in V_{ k_n + 1 , \min } $ with $ s ( e ) = v $ and $ r ( e ) = v_{ k_n + 1 } $. Proceeding inductively, we construct the desired result, with $ v' = v_{ k_{ n + 1 } } $. The same argument works for $ V_{ \max }' $ in place of $ V_{ \min }' $. This proves that conclusion (b) holds.

Next, we prove that conclusion (c) holds. Let $ v \in V_{ n , \min }' $ and let $ e \in E_{ n + 1 , \min }' $ satisfy $ r ( e ) \in R ( v ) $. We want to show that $ s ( e ) = v $. We once again regard all of this as happening in $ B $ instead of $ B' $. What this means is that we have $ v \in V_{ k_n , \min }' $ and a minimal path $ ( e_{ k_n + 1 } , \ldots , e_{ k_{ n + 1 } } ) \in P_{ k_n + 1 , k_{ n + 1 } } $ such that there is some path $ ( e_{ k_n + 1 }' , \ldots , e_{ k_{ n + 1 } }' ) \in P_{ k_n + 1 , k_{ n + 1 } } $ such that $ s ( e_{ k_n + 1 }' ) = v $ and $ r ( e_{ k_{ n + 1 } }' ) = r ( e_{ k_{ n + 1 } } ) $, and we want to show that $ s ( e_{ k_n + 1 } ) = v $. Suppose not. Then by Proposition \ref{propAssociatedBratteliDiagram}(c), we have $ r ( e_{ k_n + 1 } ) \notin R ( v ) $; in particular, we have $ r ( e_{ k_n + 1 } ) \neq r ( e_{ k_n + 1 }' ) $. Proceeding like this, we eventually see $ r ( e_{ k_{ n + 1 } } ) \neq r ( e_{ k_{ n + 1 } }' ) $, a contradiction. Thus, $ s ( e_{ k_n + 1 } ) = v $. The proof for $ V_{ \max }' $ in place of $ V_{ \min }' $ is analogous. This proves that conclusion (c) holds.

That conclusion (d) holds is immediate; replace $ m $ with $ k_{ n + m } - k_n $. This completes the proof of the proposition.
\end{proof}

\section{The Dynamical Classification Theorem}

We now prove our main theorems, Theorem \ref{thmMainTheoremPrelude} and Theorem \ref{thmMainTheorem}. 

\begin{lemma}\label{lemmaPassesThoughSameVertex}
Let $ B = ( V , E , \leq ) $ be an ordered Bratteli diagram with a perfect ordering. Let $ e , f \in X_B $ and suppose $ e $ and $ f $ pass through the same vertex $ v $ at level $ k $. Then there is some $ N \in \bZ $ such that $ h_B^N ( e ) = f $.
\end{lemma}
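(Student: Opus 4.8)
The plan is to reduce the statement to a finite, odometer-style computation on initial segments. I read the hypothesis as providing a common tail beyond level $k$: writing $e=(e_1,e_2,\ldots)$ and $f=(f_1,f_2,\ldots)$, we have $r(e_k)=r(f_k)=v$ and $e_j=f_j$ for all $j>k$; denote the shared tail by $w=(e_{k+1},e_{k+2},\ldots)$. First I would introduce the finite set $P$ of all paths $(p_1,\ldots,p_k)$ from $v_0$ to $v$, and totally order it by declaring $p<q$ whenever, at the largest level $j\le k$ with $p_j\neq q_j$, one has $p_j<q_j$. This is well defined because above such a $j$ the two paths agree, forcing $r(p_j)=r(q_j)$ and hence making $p_j,q_j$ comparable in the edge order. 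The minimum of $P$ is the all-minimal path into $v$ and its maximum is the all-maximal path into $v$; both initial segments $e^{(k)}:=(e_1,\ldots,e_k)$ and $f^{(k)}:=(f_1,\ldots,f_k)$ lie in $P$.

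The key step is an odometer-increment claim: for every $p\in P$ that is not the maximum of $P$, the point $x=(p,w)$ lies in $X_B\setminus X_{B,\max}$ and satisfies $h_B(x)=(p^+,w)$, where $p^+$ is the successor of $p$ in $P$. To see this, note that since $p$ is not maximal in $P$, some coordinate of $p$ is a non-maximal edge, so the smallest index $j$ with $x_j\notin E_{\max}$ satisfies $j\le k$. By the definition of $\widetilde{h}_B$, only coordinates $1,\ldots,j\le k$ are altered, leaving the tail $w$ untouched, and unwinding the definition shows that the new initial segment is exactly the successor $p^+$ in the order above (the successor replaces $x_j$ by its edge-successor and resets coordinates below $j$ to minimal, which is precisely how consecutive elements of $P$ differ). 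Since $x\notin X_{B,\max}$, we have $h_B(x)=\widetilde{h}_B(x)$, so the min/max extension in the definition of $h_B$ plays no role.

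To conclude, let $a$ and $b$ be the positions of $e^{(k)}$ and $f^{(k)}$ in the total order on $P$, and set $N=b-a\in\bZ$. If $N\ge 0$, I would apply the previous step $N$ times starting from $e=(e^{(k)},w)$: at each stage the current initial segment runs through $p^{(a)}<p^{(a+1)}<\cdots<p^{(b-1)}$, all strictly below $p^{(b)}$ and hence non-maximal in $P$, so each application fixes the tail $w$ and advances the segment by one, giving $h_B^{N}(e)=(f^{(k)},w)=f$. If $N<0$, I would instead apply $h_B^{-1}$ the corresponding number of times, using that $h_B$ is a bijection and that the inverse of the increment step sends $(p^+,w)$ back to $(p,w)$. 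Either way $h_B^{N}(e)=f$.

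The hard part will be the odometer-increment claim in the middle paragraph, and specifically verifying that incrementing never produces a ``carry'' past level $k$ into the shared tail $w$. This is exactly guaranteed by only ever applying $h_B$ to points whose initial segment is non-maximal in $P$, so that the first non-maximal coordinate lies at level $\le k$; it is also what lets me replace $h_B$ by the partial transformation $\widetilde{h}_B$ at every point encountered, so that perfectness of the ordering and the $X_{B,\min}$--$X_{B,\max}$ identification are not needed here. Matching the combinatorial successor $p^+$ with the output of $\widetilde{h}_B$ is then a routine unwinding of the definition of $\widetilde{h}_B$.
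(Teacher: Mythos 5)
Your proof is correct and takes essentially the same route as the paper's: both arguments exploit the fact that, with the common tail beyond level $k$ held fixed, $h_B$ acts as the successor map for the reverse-lexicographic order on the finite set of paths from $v_0$ into $v$, so one point is carried to the other by iterating. Your one-step increment claim is exactly the assertion the paper leaves unproved (the existence of $N_1$ with $h_B^{N_1}(e)$ agreeing with $f$ at the top disagreement level and minimal below it), so your write-up is if anything more complete; note too that your cofinality reading of ``pass through the same vertex at level $k$'' is the interpretation the paper's own proof and its application in Theorem \ref{thmMainTheoremPrelude} implicitly require, since the literal hypothesis alone would make the statement false (e.g.\ for a dyadic odometer).
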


\begin{proof}
Write $ e = ( e_1 , e_2 , \ldots ) $ and write $ f = ( f_1 , f_2 , \ldots ) $. Let $ n_1 $ be the largest element of $ \{ 1 , \ldots , k \} $ such that $ e_{ n_1 } \neq f_{ n_1 } $. 

Suppose $ e_{ n_1 } < f_{ n_1 } $. Then there is an $ N_1 \in \bZ_{ > 0 } $ such that 
\[
h_B^{ N_1 } ( e ) = ( e_1' , \ldots , e_{ n_1 - 1 }' , f_{ n_1 } , e_{ n_1 + 1 } , e_{ n_1 + 2 } , \ldots ) ,
\]
where $ ( e_1' , \ldots , e_{ n_1 - 1 }' ) $ is the minimal path from $ v_0 \in V_0 $ to $ s ( f_{ n_1 } ) $. Now, $ h_B^{ M } ( e ) $ and $ f $ pass through the same vertex at level $ n_1 - 1 $. So let $ n_2 $ be the largest element of $ \{ 1 , \ldots , n_1 - 1 \} $ such that $ e_{ n_2 }' \neq f_{ n_2 } $. Clearly $ e_{ n_2 }' < f_{ n_2 } $. So repeating the above process, we find an integer $ N_2 \in \bZ_{ > 0 } $ such that $ h_B^{ N_1 + N_2 } ( e ) = ( e_1\dprime , \ldots , e_{ n_2 - 1 }\dprime , f_{ n_2 } , \ldots , f_{ n_1 } , e_{ n_1 + 1 } , \ldots ) $. Repeating this process inductively, we arrive at an integer $ N $ such that $ h_B^N ( e ) = f $.
\end{proof}

\begin{thm}\label{thmMainTheoremPrelude}
Let $ ( X_1 , h_1 , Z_1 ) $ and $ ( X_2 , h_2 , Z_2 ) $ be fiberwise essentially minimal zero-dimensional systems. Then $ ( X_1 , h_1 , Z_1 ) $ and $ ( X_2 , h_2 , Z_2 ) $ are strong orbit equivalent if and only if 
\[
( K_0 ( C^* ( \bZ , X_1 , h_1 ) ) , K_0 ( C^* ( \bZ , X_1 , h_1 ) )^+ , 1 ) \cong ( K_0 ( C^* ( \bZ , X_2 , h_2 ) ) , K_0 ( C^* ( \bZ , X_2 , h_2 ) )^+ , 1 ) 
\]
and 
\[
K_1 ( C^* ( \bZ , X_1 , h_1 ) ) \cong K_1 ( C^* ( \bZ , X_2 , h_2 ) ) .
\]
\end{thm}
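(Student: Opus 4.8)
The plan is to route everything through the translations established earlier in the paper, so that the statement becomes a purely order-theoretic and combinatorial one about dimension groups and the associated Bratteli diagrams. By Theorem \ref{thmK0isomK0} we have $K_0(C^*(\bZ,X_i,h_i))\cong K^0(X_i,h_i)=C(X_i,\bZ)/\image(\id-(\alpha_i)_*)$ as ordered groups with order unit, and by Theorem \ref{thmK1Formula} we have $K_1(C^*(\bZ,X_i,h_i))\cong C(Z_i,\bZ)=\ker(\id-\alpha_i)$. Moreover, by Theorem \ref{thmAZisAF} and its proof the large subalgebra $A_{Z_i}$ is the AF-algebra whose Bratteli diagram is the diagram $B_i$ furnished by Proposition \ref{propAssociatedBratteliDiagram}, so by Theorem \ref{thmK0AZIsomorphism} the ordered group $K_0(C^*(\bZ,X_i,h_i))$ is realized as the dimension group $\varinjlim(\bZ^{|V_{i,n}|},M_{i,n})$ of $B_i$, with incidence matrices $M_{i,n}$ read off from the systems of finite first return time maps. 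Thus it suffices to prove that strong orbit equivalence is equivalent to an order-isomorphism of these dimension groups respecting the order unit together with an isomorphism $C(Z_1,\bZ)\cong C(Z_2,\bZ)$.

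For the forward direction, suppose $F:X_1\to X_2$ is a strong orbit equivalence, so $F(Z_1)=Z_2$ and the orbit cocycles $\beta,\gamma$ are continuous on $X_1\setminus Z_1$. The homeomorphism $F$ induces a positive, unital group isomorphism $C(X_2,\bZ)\to C(X_1,\bZ)$ by $g\mapsto g\circ F$. Exactly as in the cohomological computation of \cite{GiordanoPutnamSkau95} and \cite{HermanPutnamSkau92}, the continuity of $\beta$ and $\gamma$ off $Z_1$ guarantees that this map carries $\image(\id-(\alpha_2)_*)$ onto $\image(\id-(\alpha_1)_*)$, so it descends to an order-isomorphism $K^0(X_2,h_2)\cong K^0(X_1,h_1)$ preserving the class of $1$. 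Simultaneously, $F|_{Z_1}:Z_1\to Z_2$ is a homeomorphism, which induces $C(Z_2,\bZ)\cong C(Z_1,\bZ)$ and hence the required isomorphism of $K_1$ groups.

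For the converse, I would drive a Giordano--Putnam--Skau--style intertwining with the given ordered $K_0$ isomorphism. Using the realization of $K_0$ as the dimension group of $B_i$, an order-isomorphism preserving the order unit can, by the Effros--Handelman--Shen and Elliott intertwining argument, be realized after passing to telescopings: there are telescopings $B_1'$ and $B_2'$ that are isomorphic as unordered Bratteli diagrams. By Proposition \ref{propTelescopingPreservesAssociatedDiagramProperties} the diagrams $B_1'$ and $B_2'$ still satisfy the conclusions of Proposition \ref{propAssociatedBratteliDiagram}. A level-preserving isomorphism of the underlying diagrams determines a homeomorphism $G:X_{B_1'}\to X_{B_2'}$; since $G$ sends paths through a common vertex to paths through a common vertex, Lemma \ref{lemmaPassesThoughSameVertex} shows that $G$ carries $h_{B_1'}$-orbits to $h_{B_2'}$-orbits, so it is an orbit map, and transporting through the conjugacies of Proposition \ref{propAssociatedBratteliDiagram}(a) yields an orbit map $F:X_1\to X_2$. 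Away from the minimal paths the orbit cocycles are determined by finitely many edge-coordinates and the edge orderings, hence are continuous off $X_{B_1',\min}$, which corresponds to $Z_1$.

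The main obstacle is the final compatibility requirement, and this is precisely where the argument departs from the minimal case and where the $K_1$-invariant is used. Because the systems are not minimal the dimension groups are not simple, so the telescoping isomorphism must be performed compatibly with the order-ideal (equivalently, the minimal-vertex) structure, in order to force $G(X_{B_1',\min})=X_{B_2',\min}$ and hence $F(Z_1)=Z_2$. Controlling the minimal vertices $V_{\min}$ throughout the intertwining by means of the structural conclusions (b)--(d) of Proposition \ref{propAssociatedBratteliDiagram}, matching them using the identification $K_1\cong C(Z,\bZ)$, and then confirming that the resulting cocycles $\beta,\gamma$ are continuous exactly on the complement of $Z_1$, is the heart of the proof; once this is in place, $F$ is a strong orbit equivalence and the theorem follows.
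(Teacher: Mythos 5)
Your overall architecture matches the paper's (reduce to dimension groups via Theorems \ref{thmK0isomK0}, \ref{thmK1Formula}, \ref{thmK0AZIsomorphism}, intertwine Bratteli diagrams via Elliott, build an orbit map on path spaces), but both directions have genuine gaps, and in the converse direction the mechanism you propose for the crucial step would not work. You correctly identify that the heart of the converse is forcing the homeomorphism of path spaces to carry $X_{B_1',\min}$ onto $X_{B_2',\min}$ (so that $F(Z_1)=Z_2$ and the cocycles are continuous off $Z_1$), but you then propose to achieve this ``matching'' using the identification $K_1\cong C(Z,\bZ)$. The hypothesis only supplies an abstract group isomorphism $K_1(C^*(\bZ,X_1,h_1))\cong K_1(C^*(\bZ,X_2,h_2))$, i.e.\ $C(Z_1,\bZ)\cong C(Z_2,\bZ)$ as groups; since $C(Z,\bZ)$ is a countable free abelian group of infinite rank for \emph{every} infinite compact zero-dimensional metrizable $Z$, such an isomorphism carries no spatial information about $Z_1$ and $Z_2$ and cannot direct any vertex-level matching. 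The paper does this step with no reference to $K_1$ at all: it interleaves the two telescoped diagrams into a single diagram $B'$ and proves two combinatorial properties of the inherited minimal (and maximal) vertices --- every such vertex has at least one minimal vertex in its range, and exactly one in its source --- using conclusions (b), (c), (d) of Proposition \ref{propAssociatedBratteliDiagram} (preserved under telescoping by Proposition \ref{propTelescopingPreservesAssociatedDiagramProperties}); these properties yield a canonical bijection between $X_{B^{(1)},\min}$ and $X_{B^{(2)},\min}$ (and between the maximal path spaces), from which the maps $F_1$, $F_2$ and $F=F_2^{-1}\circ F_1$ are assembled. Relatedly, your claim that cofinality-preservation together with Lemma \ref{lemmaPassesThoughSameVertex} already makes $G$ an orbit map is too quick: orbits of a Vershik map are cofinality classes glued along the pairing of maximal paths with minimal paths, so a cofinality-preserving homeomorphism is an orbit map only if it also respects that pairing --- which is exactly the matching you have not constructed.

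In the forward direction, the assertion that $g\mapsto g\circ F$ carries $\image(\id-(\alpha_2)_*)$ onto $\image(\id-(\alpha_1)_*)$ is the entire content of that implication, and citing \cite{GiordanoPutnamSkau95} and \cite{HermanPutnamSkau92} does not cover it: those computations concern cocycles continuous off a \emph{single point}, and extending them to cocycles continuous off the closed set $Z_1$ is precisely the new difficulty here. The paper handles it operator-algebraically: it replaces $h_2$ by $\widetilde{h}_2=F^{-1}\circ h_2\circ F$, represents both crossed products on $L^2(X_1,\mu)$ for a common invariant measure $\mu$, uses continuity of $\beta$ off $Z_1$ to write $u_1\chi_E=\sum_n u_2^{k_n}\chi_{E\cap\beta^{-1}(k_n)}$ for compact open $E$ disjoint from $Z_1$ (whence $A_{Z_1}^{(1)}\subset C^*(\bZ,X_1,\widetilde{h}_2)$, and a unitary in $A_{Z_1}^{(1)}$ conjugates $\chi_E$ to $\chi_{h_1(E)}$), and then invokes Theorem \ref{thmK0AZIsomorphism} to conclude $\image(\id-(\alpha_1)_*)\subset\image(\id-(\alpha_2)_*)$, with the symmetric argument using $\gamma$. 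Some argument of this kind --- in particular the large subalgebra $A_{Z_1}$, which is where fiberwise essential minimality genuinely enters --- is required; as written, your sketch leaves both halves of the theorem unproved in the present generality.
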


\begin{proof}

($ \Leftarrow $). Let $ B_1 $ and $ B_2 $ be the Bratteli diagrams satisfying the conclusions of Proposition \ref{propAssociatedBratteliDiagram} for $ ( X_1 , h_1 , Z_1 ) $ and $ ( X_2 , h_2 , Z_2 ) $, respectively. By Theorem \ref{thmK0isomK0}, we have $ K^0 ( X_1 , h_1 ) \cong K^0 ( X_2, h_2 ) $. By Proposition \ref{propAssociatedBratteliDiagram}(a), we have $ K^0 ( X_{ B_1 } , h_{ B_1 } ) \cong K^0 ( X_{ B_2 } , h_{ B_2 } ) $. By a slight but trivial extension of Theorem 5.4 in \cite{HermanPutnamSkau92}, we have $ K_0 ( B_1 ) \cong K_0 ( B_2 ) $. By \cite{Elliott76}, we have $ B_1 \sim B_2 $ (in the equivalence class of Bratteli diagrams generated by telescoping and isomorphism), which tells us that there is a (non-ordered) Bratteli diagram $ B $ such that telescoping $ B $ to odd levels yields a telescoping of $ B_1 $ and telescoping $ B $ to even levels yields a telescoping of $ B_2 $. By replacing $ B_1 $ and $ B_2 $ with their telescopings (which can be done without changing the above due to Proposition \ref{propTelescopingPreservesAssociatedDiagramProperties}), we may assume that telescoping $ B $ to odd levels yields $ B_1 $ and telescoping $ B_2 $ to even levels yields $ B_2 $. Let $ B' $ be the telescoping of $ B $ by the sequence $ ( 3n - 2 ) $, so that telescoping $ B $ to odd levels yields a telescoping of $ B_1 $ by $ ( 3n - 2 ) $ and telescoping $ B $ to even levels yields a telescoping of $ B_2 $ by $ ( 3n - 1 ) $. Note that by Proposition \ref{propTelescopingPreservesAssociatedDiagramProperties}, these telescopings of $ B_1 $ and $ B_2 $ also satisfy the conclusions of Proposition \ref{propAssociatedBratteliDiagram}.

We denote by $ V_{ \min }' $ ($ V_{ \max }' $) the minimal (resp. maximal) vertices as inherited by $ B_1 $ and $ B_2 $. We claim that $ B' $ has the following properties:
\begin{enumerate}[(i)]
\item Let $ v \in V_{ \min }' $. There is at least one $ v' \in V_{ \min }' $ with $ v' \in R ( v ) $.
\item Let $ v \in V_{ \min }' $. There is precisely one $ v' \in V_{ \min }' $ with $ v' \in S ( v ) $.
\end{enumerate}

We first prove property (i). Let $ v \in V_{ \min , n }' $. The statement is trivially true for $ n = 0 $ so suppose $ n > 0 $ and without loss of generality suppose that $ n $ is odd. View $ v $ as a vertex in $ V_{ \min , 3n - 2 } $, so that the statement we are trying to prove becomes $ R^3 ( v ) \cap V_{ \min } \neq \varnothing $. Notice that for every vertex $ w \in V^{ ( 2 ) } \setminus V^{ ( 2 ) }_0 $, $ S ( w ) \cap V^{ ( 2 ) }_{ \min } \neq \varnothing $, since there is always an $ e \in E^{ ( 2 ) }_{ \min } $ with $ s ( e ) = w $. Thus, it follows that $ ( S^2 \circ R^3 ) ( v ) \cap V_{ \min } \neq \varnothing $. By Proposition \ref{propAssociatedBratteliDiagram}(b), we have $ ( R^2 \circ S^2 \circ R^3 ) ( v ) \cap V_{ \min } \neq \varnothing $. By Proposition \ref{propAssociatedBratteliDiagram}(d), we have $ R^2 \circ S^2 \circ R^2 = R^2 $. Thus, $ R^2 \circ S^2 \circ R^3 = R^3 $, so $ R^3 ( v ) \cap V_{ \min } \neq \varnothing $. The proof for $ v \in V_{ \max }' $ is analogous. Altogether, this proves property (i).

We now prove property (ii). Let $ v \in V_{ \min , n }' $ and without loss of generality suppose that $ n $ is odd. The case $ n = 1 $ is trivial so suppose $ n > 1 $. View $ v $ as a vertex in $ V_{ \min , 3n - 2 } $, so that the statement we are trying to prove is that $ S^3 ( v ) $ contains precisely one element. Let $ w \in S ( v ) $ and let $ ( e_1 , e_2 ) $ be a minimal path with $ r ( e_2 ) = w $. Since $ s ( e_1 ) \in V_{ \min } $, we have $ S^3 ( v ) \cap V_{ \min } \neq \varnothing $. Now, suppose $ S^3 ( v ) \cap V_{ \min } $ contains two elements, $ v_1 $ and $ v_2 $. By Proposition \ref{propAssociatedBratteliDiagram}(c), we have $ R^2 ( v_1 ) \cap R^2 ( v_2 ) = \varnothing $. By Proposition \ref{propAssociatedBratteliDiagram}(b), $ R^2 ( v_1 ) \cap V_{ \min } \neq \varnothing $ and $ R^2 ( v_2 ) \cap V_{ \min } \neq \varnothing $. By Proposition \ref{propAssociatedBratteliDiagram}(c) again, 
\begin{equation}\label{eq:lastproof1}
R^2 ( R^2 ( v_1 ) \cap V_{ \min } ) \cap R^2 ( R^2 ( v_2 ) \cap V_{ \min } ) = \varnothing 
\end{equation}
Since $ v \in R^3 ( v_1 ) $ and $ v \in R^3 ( v_2 ) $, we have 
\begin{equation}\label{eq:lastproof2}
R^4 ( v_1 ) \cap R^4 ( v_2 ) \neq \varnothing .
\end{equation}
But now notice that by Proposition \ref{propAssociatedBratteliDiagram}(d), we have $ R^4 ( v_1 ) = R^2 ( R^2 ( v_1 ) \cap V_{ \min } ) $ and $ R^4 ( v_2 ) = R^2 ( R^2 ( v_2 ) \cap V_{ \min } ) $. Thus, we see that (\ref{eq:lastproof1}) and (\ref{eq:lastproof2}) yield a contradiction. Therefore, $ S^3 ( v ) \cap V_{ \min } $ contains precisely one element. The proof for $ v \in V_{ \max }' $ is analogous. This proves property (ii).

For convenience, replace $ B^{ ( 1 ) } $ with its telescoping by $ ( 3 n - 2 ) $  and replace $ B^{ ( 2 ) } $ with its telescoping by $ ( 3 n - 1 ) $. Now, let $ e = ( e_1 , e_2 , \ldots ) \in X_{ B^{ ( 1 ) } , \min } $ and let $ ( v_1 , v_2 , \ldots ) $ be its associated vertices. Let $ n \in \bZ_{ > 1 } $. We view $ v_n $ as a vertex in $ V_{ 2n - 1 }' $. By property (ii), there is a unique vertex $ v_{ n - 1 }' \in V_{ \min , 2n - 2 }' $ such that $ v_{ n - 1 }' \in S ( v_n ) $. We claim that $ v_{ n - 1 } \in S ( v_{ n - 1 }' ) $. If not, then by property (ii) there is some $ w \in S ( v_{ n - 1 }' ) $ such that $ w \neq v_{ n - 1 } $. But then $ R^2 ( w ) \cap R^2 ( v_{ n - 1 } ) \neq \varnothing $, which is a contradiction again by Proposition \ref{propAssociatedBratteliDiagram}(c). Thus, for each $ n \in \bZ_{ > 1 } $, $ v_{ n - 1 }' $ is connected by a path to $ v_n' $, there is an $ e_n' \in E_{ \min  , n }^{ ( 2 ) } $ with $ s ( e_n' ) = v_{ n - 1 }' $ and $ r ( e_n' ) = v_n' $ by Proposition \ref{propAssociatedBratteliDiagram}(c). Thus, this gives us $ e' = ( e_1' , e_2' , \ldots ) \in X_{ B^{ ( 2 ) } , \min } $ such that for each $ n \in \bZ_{ > 0 } $, we have $ s ( e_n' ) = v_{ n - 1 }' $ and $ r ( e_n' ) = v_n' $.

Let $ f = ( f_1 , f_2 , \ldots ) \in X_{ B' } $ be any path with $ r ( f_n ) = s ( f_{ n + 1 } ) = v_{ 2n - 1 } $ for all odd $ n \in \bZ_{ > 0 } $ and $ r ( f_n ) = s ( f_{ n + 1 } ) = v_{ 2n }' $ for all even $ n \in \bZ_{ > 0 } $. Since this respects the vertices and the range and source maps, we are free to define $ F_1 (  e ) = f $ and $ F_2 ( e' ) = f $.

We now show that the above pairing is a bijection between $ X_{ B , \min }^{ ( 1 ) } $ and $ X_{ B , \min }^{ ( 2 ) } $. Let $ e' = ( e_1' , e_2' , \ldots ) \in X_{ B^{ ( 2 ) } , \min } $ and $ ( v_n' ) $ be as above. By property (ii), for each $ n \in \bZ_{ > 0 } $ there is precisely one $ w_n \in V_{ \min }' $ with $ w_n \in S ( v_n' ) $. Since $ v_n \in S ( v_n' ) $, we must have $ w_n = v_n $. Thus, the bijection is established, and so $ F_1 ( X_{ B^{ ( 1 ) } , \min } ) = F_2 ( X_{ B^{ ( 2 ) } , \min } ) $. We now repeat the process for maximal vertices, choosing edges which are not minimal if the number of edges between the vertices is more than one. This extends $ F_1 $ and $ F_2 $ so that $ F_1 ( X_{ B^{ ( 1 ) } , \max } ) = F_2 ( X_{ B^{ ( 2 ) } , \max } ) $.

We now extend $ F_1 $ and $ F_2 $ by any bijection between $ E_n^{ ( 1 ) } $ and $ E_{ 2 n - 1 }' $ and any bijection between $ E_n^{ ( 2 ) } $ and $ E_{ 2 n } $ that respect the range and source maps. In this way, we get homeomorphisms $ F_1 : X_{ B^{ ( 1 ) } } \to X_{ B' } $ and $ F_2 : X_{ B^{ ( 2 ) } } \to X_{ B' } $. Define $ F = F_2^{ -1 } \circ F_1 : X_{ B^{ ( 1 ) } } \to X_{ B^{ ( 2 ) } } $.

Let $ \alpha , \beta : X_{ B^{ ( 1 ) } } \to \bZ $ be the orbit cocyles of $ F $. We will show that $ \alpha $ and $ \beta $ are continuous on $ X_{ B^{ ( 1 ) } } \setminus X_{ B^{ ( 1 ) } , \max } $. So let $ e = ( e_1 , e_2 , \ldots ) \in X_{ B^{ ( 1 ) } } \setminus X_{ B^{ ( 1 ) } , \max } $. Let $ k $ be the smallest element of $ \bZ_{ > 0 } $ such that $ e_k \notin E^{ ( 1 ) }_{ \max } $.  Then $ e $ and $ h_{ B^{ ( 1 ) } } ( e ) $ are confinal from level $ k $. This means that $ F_1 ( e ) $ and $ F_1 ( h_{ B^{ ( 1 ) } } ( e )  ) $ are cofinal from level $ 2 k - 1 $, and so $ F ( e ) $ and $ F ( h_{ B^{ ( 1 ) } } ( e ) ) $ are cofinal from level $ k $. In particular, $ F ( e ) $ and $ F ( h_{ B^{ ( 1 ) } } ( e ) ) $ pass through the same vertex $ v $ at level $ k $. By Lemma \ref{lemmaPassesThoughSameVertex}, there is an integer $ N $ such that 
\[
h_{ B^{ ( 1 ) } }^N ( e ) = F ( h_{ B^{ ( 1 ) } } ( e ) ) 
\]
Let $ f \in U ( e_1 , \ldots , e_{ k + 1 } ) $. Then $ y $ and $ h_{ B^{ ( 1 ) } } ( f ) $ are confinal from level $ k $. This means that $ F_1 ( f ) $ and $ F_1 ( h_{ B^{ ( 1 ) } } ( f )  ) $ are cofinal from level $ 2 k - 1 $, and so $ F ( y ) $ and $ F ( h_{ B^{ ( 1 ) } } ( f )  ) $ are cofinal from level $ k $. Since $ e , f \in  U ( e_1 , \ldots , e_{ k + 1 } ) $, $ h_{ B^{ ( 1 ) } } ( x ) $ and $ h_{ B^{ ( 1 ) } } ( f ) $ have the same initial segment from level $ 0 $ to level $ k + 1 $, and so $ F ( h_{ B^{ ( 1 ) } } ( e ) ) $ and $ F ( h_{ B^{ ( 1 ) } } ( f ) ) $ have the same initial segment from level $ 0 $ to level $ k $. Similarly, $ F ( e ) $ and $ F ( f ) $ have the same initial segment from level $ 0 $ to level $ k $. Thus, the integer $ N $ from above satisfies 
\[
h_{ B^{ ( 1 ) } }^N ( f ) = F ( h_{ B^{ ( 1 ) } } ( f ) ) 
\]
Since $ f \in U ( e_1 , \ldots , e_{ k + 1 } ) $ was arbitrary, this shows that $ \alpha $ is continuous at $ e $.

The argument for $ \beta $ is analogous to $ \alpha $. Thus, $ ( X_{ B^{ ( 1 ) } } , h_{ B^{ ( 1 ) } } , X_{ B^{ ( 1 ) } , \max } ) $ and $ ( X_{ B^{ ( 2 ) } } , h_{ B^{ ( 2 ) } } , X_{ B^{ ( 2 ) } , \max } ) $ are strong orbit equivalent. By replacing $ F $ with $ h_{ B^{ ( 1 ) } }^{ - 1 } \circ F $, we see that $ ( X_{ B^{ ( 1 ) } } , h_{ B^{ ( 1 ) } } , X_{ B^{ ( 1 ) } , \min } ) $ and $ ( X_{ B^{ ( 2 ) } } , h_{ B^{ ( 2 ) } } , X_{ B^{ ( 2 ) } , \min } ) $ are strong orbit equivalent. Therefore, by Proposition \ref{propAssociatedBratteliDiagram}(a), $ ( X_1 , h_1 . Z_1 ) $ and $ ( X_2 , h_2 , Z_2 ) $ are strong orbit equivalent.

($\Rightarrow$). Assume $ ( X_1 , h_1 , Z_1 ) $ and $ ( X_2 , h_2 , Z_2 ) $ are strong orbit equivalent. By the definition of strong orbit equivalence, we have $ Z_1 \cong Z_2 $ and so by Theorem \ref{thmK1Formula}, we have $ K_1 ( C^* ( \bZ , X_1 , h_1 ) ) \cong K_1 ( C^* ( X_2 , h_2 , Z_2 ) ) $.

Let $ \beta , \gamma : X_1 \to \bZ $ be the associated orbit cocyles defined by $ F \circ h_1 = h_2^\beta \circ F $ and $ F \circ h_1^\gamma = h_2 \circ F $. Recall that strong orbit equivalence tells us that $ \beta $ and $ \gamma $ are continuous on $ X_1 \setminus Z_1 $ and $ F ( Z_1 ) = Z_2 $. 

Let $ \widetilde{ h }_2 = F^{ - 1 } \circ h_2 \circ F : X_1 \to X_1 $. Then $ \widetilde{ h }_2 $ is conjugate to $ h_2 $, has the same orbits as $ h_1 $, and
\[
h_1 = \widetilde{ h }_2^\beta , \widetilde{ h }_2 = h_1^\gamma.
\]
We also see that $ ( X_1 , \widetilde{ h }_2 , Z_1 ) $ is a fiberwise essentially minimal zero-dimensional system that is conjugate to $ ( X_2 , h_2 , Z_2 ) $, so we work with the former for the remainder of the proof.

Let $ \alpha_1 $ be the automorphism of $ C ( X_1 ) $ induced by $ h_1 $ and let $ \alpha_2 $ be the automorphism of $ C ( X_1 ) $ induced by $ \widetilde{ h }_2 $. We now show that $ \ran ( \id - ( \alpha_1 )_* ) \subset \ran ( \id - ( \alpha_2 )_* ) $. It is enough to show that for any compact open set $ E \subset X_1 $, we have $ ( \id - ( \alpha_1 )_* ) ( \chi_E ) \in \ran ( \id - ( \alpha_2 )_* ) $. 

Given $ f \in C ( X_1 , \bZ ) $, we denote the image of $ f $ in $ K_0 ( C^* ( \bZ , X_1 , h_i ) ) $ by $ [ f ]_1 $ and denote the image of $ f $ in $ K_0 ( C^* ( \bZ , X_1 , \widetilde{ h }_2 ) ) $ by $ [ f ]_2 $. Since $ h_1 $ and $ \widetilde{ h }_2 $ have the same orbits, there is a probably measure $ \mu $ that is both $ h_1 $- and $ \widetilde{ h }_2 $-invariant. Note that $ C^* ( \bZ , X_1 , h_1 ) $ is the $ C^* $-subalgebra of $ L ( L^2 ( X_1 , \mu ) ) $ generated by $ C ( X ) $ and the unitary $ u_1 : g \to g \circ h_1^{ -1 } $. Also note that $ C^* ( \bZ , X_1 , \widetilde{ h }_2 ) $ is the $ C^* $-subalgebra of $ L ( L^2 ( X_1 , \mu ) ) $ generated by $ C ( X ) $ and the unitary $ u_2 : g \to g \circ \widetilde{ h }_2^{ -1 } $. For the remainder of the proof, we identify these crossed products with these corresponding subalgebras of $ L ( L^2 ( X_1 , \mu ) ) $.

We claim that map $ \vphi : K_0 ( C^* ( \bZ , X_1 , h_1 ) ) \to K_0 ( C^* ( \bZ , X_1 , \widetilde{ h }_2 ) ) $ defined by $ \vphi ( [ \chi_U ]_1 ) = [ \chi_U ]_2 $ is an isomorphism of ordered groups. Since the positive cone and distinguished order units agree via this map, the only thing to check is that $ \image ( \id - ( \alpha_1 )_* ) = \image ( \id - ( \alpha_2 )_* ) $.

Let $ E $ be a compact open subset of $ X_1 $ such that $ E \cap Z_1 = \varnothing $. Then since $ \beta $ is continuous on $ E $, $ \ran ( \beta|_E ) = \{ k_1 , \ldots , k_N \} $. Then
\[
u_1 \chi_E = \sum_{ n = 1 }^N u_2^{ k_n } \chi_{ E \cap \beta^{ -1 } ( k_n ) } .
\]
Thus, $ u_1 \chi_E \in C^* ( \bZ , X_1 , \widetilde{ h }_2 ) $, and, letting $ A_{ Z_1 }^{ ( 1 ) } $ correspond to $ C^* ( \bZ , X_1 , h_1 ) $ as in Definition \ref{defnAZ}, we have $ A_{ Z_1 }^{ ( 1 ) } \subset C^* ( \bZ , X_1 , \widetilde{ h }_2 ) $. Since $ E \cap Z_1 = \varnothing $, there is a unitary $ v \in A_{ Z_1 }^{ ( 1 ) } $ such that $ v \chi_E v^* = \chi_{ h_1 ( E ) } $. Since we also have $ v \in C^* ( \bZ , X_1 , \widetilde{ h }_2 ) $, we have $ [ \chi_E ]_2 = [ \chi_{ h_1 ( E ) } ]_2 $, so $ \chi_E - \chi_{ h_1 ( E ) } \in \image ( \id - ( \alpha_2 )_* ) $. Theorem \ref{thmK0AZIsomorphism} (and the proof) tells us that $ K_0 ( A_{ Z_1 } )^{ ( 1 ) } \cong K_0 ( \bZ , X_1 , h_1 ) $ by the injection map. Thus, $ \image ( \id - ( \alpha_1 )_* ) \subset \image ( \id - ( \alpha_2 )_* ) $. By repeating the above process with $ \gamma $ instead of $ \beta $, we can similarly show that $ \image ( \id - ( \alpha_2 )_* ) \subset \image ( \id - ( \alpha_1 )_* ) $. This completes the proof.
\end{proof}

\begin{thm}\label{thmMainTheorem}
Let $ ( X_1 , h_1 , Z_1 ) $ and $ ( X_2 , h_2 , Z_2 ) $ be fiberwise essentially minimal zero-dimensional systems with no periodic points. The following are equivalent:
\begin{enumerate}[(a)]
\item $ C^* ( \bZ , X_1 , h_1 ) \cong C^* ( \bZ , X_2 , h_2 ) $.
\item $ ( K_0 ( C^* ( \bZ , X_1 , h_1 ) ) , K_0 ( C^* ( \bZ , X_1 , h_1 ) )^+ , 1 ) \cong ( K_0 ( C^* ( \bZ , X_2 , h_2 ) ) , K_0 ( C^* ( \bZ , X_2 , h_2 ) )^+ , 1 ) $ and  

\noindent $ K_1 ( C^* ( \bZ , X_1 , h_1 ) ) \cong K_1 ( C^* ( \bZ , X_2 , h_2 ) ) $.
\item $ ( X_1 , h_1 , Z_1 ) $ and $ ( X_2 , h_2 , Z_2 ) $ are strong orbit equivalent.
\end{enumerate}
\end{thm}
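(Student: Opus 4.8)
The plan is to close a cycle of implications: I would prove (a) $\Rightarrow$ (b), obtain (b) $\Leftrightarrow$ (c) for free, and then prove (b) $\Rightarrow$ (a). The equivalence (b) $\Leftrightarrow$ (c) requires no new argument whatsoever, since it is precisely the content of Theorem \ref{thmMainTheoremPrelude}, which holds for \emph{all} fiberwise essentially minimal zero-dimensional systems, with or without periodic points. Thus the genuine work is to tie the $C^*$-algebraic statement (a) to the $K$-theoretic statement (b), and it is exactly here that the no-periodic-points hypothesis is used.

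For (a) $\Rightarrow$ (b) I would simply invoke functoriality of $K$-theory. Any $*$-isomorphism $\Phi : C^*(\bZ, X_1, h_1) \to C^*(\bZ, X_2, h_2)$ is automatically unital, because both crossed products are unital (each $C(X_i)$ is unital and the crossed product by $\bZ$ preserves this) and a surjective $*$-homomorphism carries the unit to the unit. Consequently $\Phi_*$ on $K_0$ is an isomorphism of ordered groups sending the distinguished order unit $1$ to $1$, and the induced map on $K_1$ is a group isomorphism. This is exactly (b).

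The substance of the theorem is the implication (b) $\Rightarrow$ (a), the classification step. By Theorem \ref{thmPreviousMainTheorem}, both crossed products are A$\bT$-algebras. The hypothesis that the systems have no periodic points guarantees, as established in \cite{Herstedt21}, that these A$\bT$-algebras have real rank zero. For A$\bT$-algebras of real rank zero the classification results of Elliott \cite{Elliott93} and Dadarlat-Gong \cite{DadarlatGong97} assert that the ordered $K$-theory with distinguished order unit is a complete invariant; and since the $K$-theory of any A$\bT$-algebra is torsion-free, the total $K$-theory occurring in the general statements collapses to ordinary $K$-theory, so the data listed in (b)---the ordered group $(K_0, K_0^+, 1)$ together with $K_1$---is exactly the classifying invariant. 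An isomorphism of these invariants therefore lifts to a $*$-isomorphism of the crossed products, yielding (a). Combining this with (a) $\Rightarrow$ (b) and Theorem \ref{thmMainTheoremPrelude} closes the cycle and proves that (a), (b), (c) are equivalent.

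I expect the conceptual heart, and the only place where the no-periodic-points hypothesis is indispensable, to be the passage through real rank zero in the (b) $\Rightarrow$ (a) step: it is real rank zero that makes the ordered $K$-theory a complete invariant, and without it the crossed products---though still A$\bT$-algebras---need not be classifiable by the invariant in (b). The remaining implications are formal, requiring only functoriality of $K$-theory and a correct citation of the classification machinery, so no delicate estimates or constructions should be needed beyond verifying that the hypotheses of \cite{Elliott93} and \cite{DadarlatGong97} are met.
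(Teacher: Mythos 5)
Your proposal is correct and takes essentially the same approach as the paper: (b) $\Leftrightarrow$ (c) is quoted directly from Theorem \ref{thmMainTheoremPrelude}, and (a) $\Leftrightarrow$ (b) follows from the fact that both crossed products are A$\bT$-algebras of real rank zero (Theorems 2.2 and 2.3 of \cite{Herstedt21}) together with the classification results of \cite{Elliott93} and \cite{DadarlatGong97}. The only difference is presentational: you unpack the easy direction (a) $\Rightarrow$ (b) as functoriality of $K$-theory and isolate the classification step (b) $\Rightarrow$ (a), whereas the paper compresses both directions into a single citation.
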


\begin{proof}

(a) $ \Longleftrightarrow $ (b). By Theorem 2.2 and 2.3 of \cite{Herstedt21}, $ C^* ( \bZ , X_1 , h_1 ) $ and $ C^* ( \bZ , X_2 , h_2 ) $ are A$\bT$-algebras of real rank zero, so this result follows from \cite{DadarlatGong97}.

(b) $ \Longleftrightarrow $ (c). This is implied by Theorem \ref{thmMainTheoremPrelude}.

\end{proof}

\bibliographystyle{plain}
\bibliography{researchbib}

\end{document}